\numberwithin{equation}{section}
\theoremstyle{plain}
\newtheorem{maintheorem}[subsubsection]{Theorem}
\newtheorem{theorem}[subsubsection]{Theorem}
\newtheorem{lemma}[subsubsection]{Lemma}
\newtheorem{proposition}[subsubsection]{Proposition}
\newtheorem{corollary}[subsubsection]{Corollary}
\theoremstyle{definition}
\newtheorem{definition}[subsubsection]{Definition}
\theoremstyle{remark}
\newtheorem{remark}[subsubsection]{Remark}
\newtheorem*{remark*}{Remark}
\newcommand{\R}{\mathbb{R}}
\newcommand{\be}{{\mathbf{e}}}
\newcommand{\eps}{{\varepsilon}}
\newcommand{\mint}{-\!\!\!\!\!\!\int}
\newcommand{\Lip}{{\text {Lip}}}
\newcommand{\dist}{{\text {dist}}}
\newcommand{\im}{{\text {Im}}}
\newcommand{\dv}{{\text {div}}}
\newcommand\weak{{\rightharpoonup}\,}
\renewcommand\d{{\rm d}\,}
\newcommand\supp{{\rm spt}\,}
\newcommand\res{\mathop{\hbox{\vrule height 7pt width .5pt depth 0pt
\vrule height .5pt width 6pt depth 0pt}}\nolimits}
\newcommand\Id{{\rm Id}\,}
\newcommand{\mass}{{\mathbf{M}}}
\newcommand{\bC}{{\bar{C}}}
\newcommand{\cB}{{\mathcal{B}}}
\newcommand{\cG}{{\mathcal{G}}}
\newcommand{\cL}{{\mathcal{L}}}
\newcommand{\cH}{{\mathcal{H}}}
\newcommand{\cM}{{\mathcal{M}}}
\newcommand{\cC}{{\mathcal{C}}}
\newcommand{\cU}{{\mathcal{U}}}
\newcommand{\cR}{{\mathcal{R}}}
\newcommand{\Pe}{{\mathbf{S}}}
\newcommand\sD{{\mathscr D}}
\newcommand\Z{{\mathbb Z}}
\newcommand\N{{\mathbb N}}
\newcommand\C{{\mathbb C}}
\newcommand{\Iqs}{{\mathcal{A}}_Q(\R^{n})}
\newcommand{\Iq}{{\mathcal{A}}_Q}
\def\a#1{\left\llbracket{#1}\right\rrbracket}
\newcommand{\abs}[1]{\left|#1\right|}
\newcommand{\D}{\textup{Dir}}
\newcommand{\de}{\partial}
\newcommand{\etaa}{{\bm{\eta}}}
\newcommand{\ph}{\varphi}
\newcommand{\graph}{\textup{graph}}
\newcommand{\loc}{\textup{loc}}
\newcommand\bT{\mathbf{T}}
\newcommand{\bG}{\mathbf{G}}
\newcommand{\gr}{{\rm Gr}}
\newcommand{\p}{{\bf p}}
\newcommand{\bE}{{\mathbf{E}}}
\newcommand{\bh}{{\mathbf{h}}}
\newcommand\rD{{\rm D}}
\newcommand\B{{\mathbf{B}}}
\newcommand{\sW}{{\mathscr{W}}}
\newcommand{\sC}{{\mathscr{C}}}
\newcommand\sS{{\mathscr S}}
\newcommand\sP{{\mathscr P}}
\newcommand{\bGam}{{\bm \Gamma}}
\newcommand{\bU}{{\mathbf{U}}}
\newcommand{\phii}{{\bm{\varphi}}}
\newcommand{\Phii}{{\bm{\Phi}}}
\newcommand{\cV}{{\mathcal{V}}}
\newcommand{\cK}{{\mathcal{K}}}
\newcommand\Sing{\textup{Sing}}
\newcommand\Reg{\textup{Reg}}
\newcommand{\bD}{{\mathbf{D}}}
\newcommand{\bH}{{\mathbf{H}}}
\newcommand{\bI}{{\mathbf{I}}}
\newcommand{\bF}{{\mathbf{F}}}
\newcommand{\bOmega}{{\mathbf{\Omega}}}
\newcommand{\bSigma}{{\mathbf{\Sigma}}}
\author{Emanuele Spadaro}
\title[Higher codimension integral currents]{Regularity of higher codimension area minimizing integral currents}
\email{spadaro@mis.mpg.de}
\begin{document}

\begin{abstract}
This lecture notes are an expanded and revised version of the course 
\textit{Regularity of higher codimension area minimizing integral currents}
that I taught at the \textit{ERC-School on Geometric Measure Theory and Real Analysis},
held in Pisa, September 30th - October 30th 2013.

The lectures aim to explain
the main steps of a new proof of the partial
regularity of area minimizing integer rectifiable currents in higher codimension, due originally to F. Almgren,
which is contained in a series of papers in collaboration with C. De Lellis (University of Z\"urich).
\end{abstract}

\maketitle

\tableofcontents

%
%
\section{Introduction}

The subject of this course is the study of the regularity of \textit{minimal surfaces},
considered in the sense of \textit{area minimizing integer rectifiable currents}.
This is a very classical topic and stems from many diverse questions and applications.
Among the most known there is perhaps the so called \textit{Plateau problem},
consisting in finding the submanifolds of least possible volume among all those
submanifolds with a fixed boundary.

\medskip

\textbf{Plateau problem.} Let $M$ be a $(m+n)$-dimensional Riemannian
manifold and $\Gamma\subset M$ a compact $(m-1)$-dimensional oriented submanifold.
Find an $m$-dimensional oriented submanifold $\Sigma$ with boundary $\Gamma$ such that
\[
\textup{vol}_m (\Sigma) \leq \textup{vol}_m (\Sigma'),
\]
for all oriented submanifolds $\Sigma' \subset M$ such that $\de \Sigma' = \Gamma$.

\medskip

It is a well-known fact that the solution of the Plateau problem does not 
always exist.
For example, consider $M = \R^4$, $n=m=2$ and $\Gamma$ the smooth Jordan curve parametrized in the following way:
\[
\Gamma = \big\{ (\zeta^2, \zeta^3) : \zeta \in \C, \; |\zeta|=1 \big\} \subset \C^2 \simeq \R^4,
\]
where we use the usual identification between $\C^2$ and $\R^4$, and we choose the orientation
of $\Gamma$ induced by the anti-clockwise orientation of the unit circle $|\zeta|=1$ in $\C$.
It can be shown (and we will come back to this point in the next sections) that
there exist no smooth solutions to the Plateau problem for such fixed boundary,
and the \textit{(singular) immersed} $2$-dimensional disk
\[
S = \big\{(z,w) \;:\; z^3 = w^2, \; |z|\leq 1 \big\} \subset \C^2 \simeq \R^4,
\]
oriented in such a way that $\de S = \Gamma$, satisfies
\[
\cH^2(S) < \cH^2(\Sigma),
\]
for all smooth, oriented $2$-dimensional submanifolds $\Sigma \subset \R^4$ with
$\de \Sigma = \Gamma$. Here and in the following we denote by $\cH^k$ the $k$-dimensional
Hausdorff measure, which for $k \in \N$ corresponds to the ordinary $k$-volume
on smooth $k$-dimensional submanifolds.

\medskip

This fact motivates the introduction of \textit{weak solutions} to the Plateau problem,
and the main questions about their existence and regularity.

\subsection{Integer rectifiable currents}
One of the most successful theories of generalized submanifolds
is the one by H. Federer and W. Fleming in \cite{FF} on
integer rectifiable currents (see also \cite{DG54,DG55} for the special case of codimension one generalized submanifolds).
From now on, in order to keep the technicalities to a minimum level, we assume that our ambient Riemannian manifold $M$ is Euclidean.

\begin{definition}[Integer rectifiable currents]\label{d:ir}
An integer rectifiable current $T$ of dimension $m$ in $\R^{m+n}$ is a
triple $T=(R, \tau, \theta)$ such that:
\begin{itemize}
\item[(i)] $R$ is a \textit{rectifiable set}, i.e.~$R = \bigcup_{i \in \N} C_i$
with $\cH^m(R_0) = 0$ and $C_i \subset M_i$ for every $i \in \N\setminus \{0\}$,
where $M_i$ are $m$-dimensional oriented $C^11$ submanifolds of $\R^{m+n}$;

\item[(ii)] $\tau:R \to \Lambda_m$ is a measurable map,
called \textit{orientation},
taking values in the space of $m$-vectors
such that, for $\cH^m$-a.e.~$x \in C_i$, $\tau(x) = v_1\wedge \cdots \wedge v_m$ with
$\{v_1, \ldots, v_m\}$ an oriented orthonormal basis of $T_x M_i$;

\item[(iii)] $\theta:R \to \Z$ is a measurable function,
called \textit{multiplicity},
which is integrable with respect to $\cH^m$.
\end{itemize}
An integer rectifiable current $T= (R, \tau, \theta)$ induces a continuous linear functional (with respect to
the natural Fr\'echet topology) on smooth, compactly supported
$m$-dimensional differential forms $\omega$, denoted by $\sD^m$,
acting as follows
\[
T(\omega) = \int_R \theta\,\langle \omega, \tau \rangle \,d\cH^m. 
\]
\end{definition}

\begin{remark}
The continuous linear functionals defined in the Fr\'echet space $\sD^m$
are called \textit{$m$-dimensional currents}.
\end{remark}

\begin{remark}
Note that the submanifold $M_i$ in Definition~\ref{d:ir} are only $C^1$ regular. This
restriction is not redundant, but it is connected to several aspects of the theory
of rectifiable sets.
\end{remark}

For an integer rectifiable current $T$, one can define the analog of the boundary and the volume
for smooth submanifolds.

\begin{definition}[Boundary and mass]\label{d:bdy and mass}
Let $T=(R, \tau, \theta)$ be an integer rectifiable current in $\R^{m+n}$ of dimension $m$.
The \textit{boundary} of $T$ is defined as the $(m-1)$-dimensional current acting as follows
\[
\de T (\omega) := T(d\omega) \quad \forall\; \omega \in \sD^{m-1}.
\]
The \textit{mass} of $T$ is defined as the quantity
\[
\mass(T) := \int_R |\theta| \,d\cH^m.
\]
\end{definition}

Note that, in the case $T= (\Sigma, \tau_\Sigma, 1)$ is the current
induced by an oriented submanifold $\Sigma$ with boundary $\de \Sigma$,
with $\tau_\Sigma$ a continuous orienting vector for $\Sigma$ and similarly
$\tau_{\de \Sigma}$ for its boundary,
then by Stoke's Theorem $\de T = (\de \Sigma, \tau_{\de \Sigma}, 1)$ and
$\mass(T) = \textup{vol}_m(\Sigma)$.

Finally we recall that the space of currents is usually endowed with the weak* topology
(often called in this context \textit{weak} topology).

\begin{definition}[Weak topology]
We say that a sequence of currents $(T_l)_{l\in \N}$ weakly converges to some current
$T$, and we write $T_l \weak T$, if
\[
T_l(\omega) \to T(\omega) \quad \forall\; \omega \in \sD^m.
\]
\end{definition}

\bigskip

The Plateau problem has now a straightforward generalization in this context of integer rectifiable currents.

\medskip

\textbf{Generalized Plateau problem.} Let $\Gamma$ be a compactly supported
$(m-1)$-dimensional integer rectifiable current in $\R^{m+n}$ with $\de\Gamma=0$.
Find an $m$-dimensional integer rectifiable
current $T$ such that $\de T= \Gamma$ and
\[
\mass(T) \leq \mass(S),
\]
for every $S$ integer rectifiable with $\de S = \Gamma$.

\medskip

The success of the theory of integer rectifiable currents is linked ultimately
to the possibility to solve the generalized Plateau problem, 
due to the closure
theorem by H. Federer and W. Fleming proven in their pioneering paper \cite{FF}.

\begin{theorem}[Federer and Fleming \cite{FF}]
Let $(T_l)_{l\in \N}$ be a sequence of $m$-dimensional integer rectifiable currents
in $\R^{m+n}$ with
\[
\sup_{l\in \N} \big( \mass(T_l) + \mass(\de T_l) \big) < + \infty,
\]
and assume that $T_l \weak T$.
Then, $T$ is an integer rectifiable current.
\end{theorem}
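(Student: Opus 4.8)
The plan is to reduce the statement to two independent pieces: first, that the weak limit $T$ is a \emph{normal current} (finite mass, finite-mass boundary), which is essentially free from lower semicontinuity of mass; second — and this is the substance — the so-called \emph{closure theorem}, namely that a normal current which is a weak limit of integer rectifiable currents is itself integer rectifiable. For the first piece I would recall that mass is lower semicontinuous under weak convergence: $\mass(T)\le\liminf_l\mass(T_l)$, since $\mass(T)=\sup\{T(\omega):\omega\in\sD^m,\ \|\omega\|_\infty\le 1\}$ is a supremum of weakly continuous linear functionals. Applying the same to $\de T_l\weak\de T$ (weak convergence of boundaries is immediate from the definition, testing $\de T_l$ against $\omega\in\sD^{m-1}$ means testing $T_l$ against $d\omega\in\sD^m$), we get $\mass(\de T)\le\liminf_l\mass(\de T_l)$. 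Hence $\mass(T)+\mass(\de T)<+\infty$, so $T$ is a normal current, and in particular (by the Riesz representation / structure theorem for normal currents) $T$ is representable as integration against a Radon measure $\|T\|$ with an $\|T\|$-measurable unit $m$-vector field $\vec T$.

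Next I would establish \emph{rectifiability of the carrying measure}. The key tool is the Besicovitch–Federer rectifiability criterion together with a slicing argument: slice the currents $T_l$ and $T$ by a generic family of affine $(m+n-m)$-planes (equivalently, by level sets of linear projections $\p\colon\R^{m+n}\to\R^m$). Integral currents of dimension $m$ slice into $0$-dimensional integral currents — finite integer combinations of oriented points — and the slicing operation is compatible with weak convergence after passing to subsequences and using the mass bounds (this is where $\sup_l(\mass(T_l)+\mass(\de T_l))<\infty$ is used: it controls the masses of almost every slice uniformly via the coarea/slicing inequality). A $0$-dimensional normal current with integer coefficients is automatically integer rectifiable, so almost every slice of $T$ is integer rectifiable; the \emph{slicing rectifiability theorem} (a current whose slices by a sufficiently rich family of planes are all rectifiable is itself rectifiable, proved via a projection argument à la Besicovitch) then upgrades this to rectifiability of $T$ itself. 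This identifies the carrying set $R$ of $T$ as an $m$-rectifiable set and $\vec T$ as (up to sign) the orienting simple $m$-vector of its approximate tangent planes.

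The remaining and genuinely delicate point — which I expect to be the main obstacle — is the \emph{integrality of the multiplicity}, i.e.\ showing the density $\theta(x)=d\|T\|/d(\cH^m\res R)$ is $\cH^m$-a.e.\ an integer. Weak convergence only gives convergence of the currents as functionals, not of their densities pointwise, and mass can concentrate or cancel in the limit, so one cannot simply pass to the limit in ``$\theta_l\in\Z$''. The standard remedy is again via slicing combined with a \emph{structure theorem for $0$-dimensional flat chains with integer coefficients}: for almost every line in a generic direction, the slice $\langle T,\p,y\rangle$ is a $0$-current of the form $\sum_i\sigma_i\a{x_i}$ with $\sigma_i\in\Z$, obtained as a weak limit of the integer $0$-currents $\langle T_l,\p,y\rangle$; the crucial closure fact at dimension $0$ is that a weak limit of integer-multiplicity $0$-currents with uniformly bounded mass \emph{and} bounded boundary mass of the associated $1$-dimensional pieces remains integer — this is where Fleming's original flat-chain deformation/round-off argument enters. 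Integrating the integrality of slice multiplicities back over the family of planes (Fubini for rectifiable sets) forces $\theta$ itself to be integer-valued $\cH^m$-a.e. Assembling the three ingredients — $T$ normal, $\|T\|$ rectifiable with rectifiable orientation, and $\theta\in\Z$ a.e.\ — yields that $T=(R,\tau,\theta)$ is an integer rectifiable current, completing the proof.
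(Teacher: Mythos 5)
The paper itself offers no proof of this statement: it is quoted verbatim from Federer--Fleming \cite{FF} as the foundation of the existence theory, so there is no in-paper argument to compare yours with. Your sketch follows the standard modern route (lower semicontinuity of mass to get a normal limit, reduction to $0$-dimensional slices, integrality of the limit slices), whereas the original \cite{FF} proof obtained rectifiability of the limit via the Besicovitch--Federer structure theorem; your outline in fact conflates the two, invoking both the structure theorem and the slicing rectifiability criterion, only one of which is needed. The first step (lower semicontinuity of $\mass$ and of $\mass(\de\cdot)$, hence $T$ normal) is correct and essentially free, as you say.

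The genuine gap is the sentence asserting that ``the slicing operation is compatible with weak convergence after passing to subsequences and using the mass bounds.'' This is precisely the delicate point, and as stated it fails: weak convergence $T_l\weak T$ does not imply that the slices $\langle T_l,\p,y\rangle$ converge to $\langle T,\p,y\rangle$ for a.e.\ $y$, even along subsequences, because slicing is continuous (for a.e.\ parameter) with respect to the \emph{flat} distance, not with respect to weak convergence. To make your scheme run you must first upgrade weak convergence to flat convergence, and this is where the Federer--Fleming deformation theorem enters: the uniform bound on $\mass(T_l)+\mass(\de T_l)$ together with weak convergence yields convergence in the flat metric, after which the coarea/slicing inequality gives, for a.e.\ parameter and a suitable subsequence, flat convergence of the slices with uniformly bounded masses; only then does the $0$-dimensional closure apply (and there, with bounded mass, a flat limit of integer $0$-currents is indeed a finite integer combination of points, so that part is fine once convergence of slices is secured). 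Without the deformation theorem, or some substitute for it, the passage from integrality of the slices of $T_l$ to integrality of the slices of $T$ is unjustified, so the proposal as written has a hole exactly at the step that carries the theorem's content; the remaining appeals to the structure theorem and to ``Fleming's round-off argument'' are citations of the hard ingredients rather than arguments, which is acceptable in an outline but should be flagged as such.
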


It is then natural to ask about the regularity properties of the solutions to
the generalized Plateau problem, called in the sequel \textit{area minimizing}
integer rectifiable currents.

\subsection{Partial regularity in higher codimension}
The regularity theory for area minimizing integer rectifiable currents
depends very much on the dimension of the current and its \textit{codimension} in
the ambient space
(i.e., using the same letters as above, if $T$ is an $m$-dimensional current in $\R^{m+n}$, the codimension is $n$).

In this course we are interested in the general case of currents with higher codimensions $n > 1$.
The case $n=1$ is usually treated separately, because 
different techniques can be used and more refined results can be proven 
(see \cite{DG, Fleming, Sim83, Sim95, Simons, Reif} for the
interior regularity and \cite{All2, HS} for the boundary regularity).
In higher codimension the most general result is due to F.~Almgren \cite{Alm}
and concerns the interior partial regularity up to a (relatively) closed set of dimension
at most $m-2$.

\begin{maintheorem}[Almgren \cite{Alm}]\label{t:main}
Let $T$ be an $m$-dimensional area minimizing integer rectifiable current in $\R^{m+n}$.
Then, there exists a closed set $\Sing(T)$ of Hausdorff dimension at most $m-2$
such that in $\R^{m+n} \setminus (\supp(\de T) \cup \Sing(T))$
the current $T$ is induced by the integration over a smooth oriented submanifold of $\R^{m+n}$.
\end{maintheorem}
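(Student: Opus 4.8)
The plan is to follow Almgren's monumental scheme in the streamlined form of the De Lellis--Spadaro program. Since the statement is local and the class of area minimizing currents is dilation invariant, we fix a ball $\B$ with $\B\cap\supp(\de T)=\emptyset$ and work inside it. The monotonicity formula for area minimizers produces at every $x\in\supp(T)\cap\B$ a well-defined density $\Theta(T,x)\ge 1$, which is upper semicontinuous in $x$, and it guarantees that every tangent cone to $T$ at $x$ is itself an area minimizing cone. If $\Theta(T,x)=1$, the unique tangent cone is a multiplicity one plane, and Allard's regularity theorem (applicable since an area minimizing current is a stationary integral varifold) shows that $T$ is a smooth embedded submanifold near $x$; it is in fact real analytic, by elliptic regularity for the minimal surface system. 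Hence $\Sing(T)\subseteq\{\Theta(T,\cdot)>1\}$, a relatively closed set. Next I would run Federer's dimension reduction on the stratification $S_0\subseteq S_1\subseteq\cdots\subseteq S_{m-1}$ of $\supp(T)$, where $S_k$ denotes the set of points at which no tangent cone is invariant under translations by a $(k+1)$-dimensional subspace; this gives $\dimh(S_k)\le k$. At a point of $S_{m-1}\setminus S_{m-2}$ a tangent cone would split as $V\times C$ with $V$ an $(m-1)$-plane and $C$ a one-dimensional area minimizing cone in the orthogonal $\R^{n+1}$; but one-dimensional area minimizing integral currents do not branch, so $C$ is a multiplicity $k$ line, the cone is flat, and the point lies in neither $S_{m-1}$ nor $S_{m-2}$ --- a contradiction. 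Therefore $\dimh(S_{m-1})\le m-2$, and every singular point not in $S_{m-2}$ admits a flat tangent cone. Writing $\Sing_f(T)$ for the \emph{branch set} of singular points at which $T$ has a flat tangent cone, we have $\Sing(T)=S_{m-2}\cup\Sing_f(T)$, so everything reduces to proving $\dimh(\Sing_f(T))\le m-2$. This is exactly where codimension $n>1$ is genuinely harder than $n=1$, in which $\Sing_f(T)=\emptyset$; the current $S=\{z^3=w^2\}$ of the introduction shows that branch points do occur.

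\textbf{Step 1: the linear model.} The infinitesimal behaviour of $T$ near a branch point of density $Q$ is governed by a $Q$-valued map minimizing the Dirichlet energy. So first I would develop the calculus of Sobolev maps $u\colon\Omega\to\Iqs$ into the metric space of unordered $Q$-tuples of points in $\R^n$: the Dirichlet energy $\D(u)$, existence of minimizers with prescribed trace, their interior Hölder continuity, and --- crucially --- the estimate $\dimh(\Sing(u))\le m-2$ for the set of points near which $u$ does not split into single-valued harmonic sheets with pairwise disjoint graphs (with $\Sing(u)$ discrete when $m=2$). This is proved by a Federer-type dimension reduction for Dir-minimizers together with the classification of their homogeneous tangent maps.

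\textbf{Step 2: approximation and the center manifold.} Next I would connect $T$ to this linear model. Whenever $T$ has small cylindrical excess $\bE$ with respect to an $m$-plane over a cylinder and density close to $Q$, Almgren's strong Lipschitz approximation produces a Lipschitz map $f\colon B\to\Iqs$ whose graph coincides with $T$ outside a set of measure superlinearly small in $\bE$, with $\D(f)$ comparable to $\bE$ and $f$ asymptotically Dir-minimizing. A single $m$-plane does not suffice, however, because the average $\tfrac1Q\sum_i f_i$ may itself bend at the scale of the oscillation of $f$. To correct this I would construct the \emph{center manifold}: a $C^{3,\kappa}$ submanifold $\cM$ obtained, via a Whitney-type decomposition of the reference plane, by patching together the averages of the Lipschitz approximations of $T$ at all dyadic scales, together with a normal $Q$-valued map $N$ on $\cM$ such that $T$ coincides with $\graph(N)$ over $\cM$ and $N$ is \emph{balanced}, i.e.\ $|\etaa\circ N|$ is of strictly higher order than $|N|$ itself. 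I expect this to be the main obstacle: one must keep $\cM$ regular enough and, simultaneously, control every error term --- those coming from the curvature of $\cM$, from the first variation and minimality of $T$, and from the discrepancy between $T$ and $\graph(N)$ --- so that all of them are negligible compared with $\D(N)$. This delicate bookkeeping is the technical heart of the whole argument.

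\textbf{Step 3: frequency function and blow-up.} With $\cM$ and $N$ in hand I would introduce Almgren's frequency function $I(x_0,r)=\frac{r\int_{B_r(x_0)}|DN|^2}{\int_{\de B_r(x_0)}|N|^2}$ on $\cM$; the estimates of Step 2 yield an almost-monotonicity for $r\mapsto e^{Cr^\kappa}I(x_0,r)$, so the limit $I_0:=I(x_0,0^+)\ge 1$ exists at every branch point $x_0$. Rescaling $N$ around $x_0$ by its $L^2(\de B_r)$-norm and letting $r\to 0$ produces a nonzero blow-up $N_\infty$ which is $I_0$-homogeneous, Dir-minimizing, and still balanced ($\etaa\circ N_\infty\equiv 0$); in particular $x_0$ persists as a genuine singular point of $N_\infty$ at which all $Q$ sheets meet. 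Combining a persistence-of-$Q$-points statement along such blow-ups with the dimension estimate of Step 1 --- and a final Federer-type reduction carried out directly on $\Sing_f(T)$ --- forces $\dimh(\Sing_f(T))\le m-2$. Together with the bound on $S_{m-2}$ this proves the theorem (and, when $m=2$, refines it to local finiteness of $\Sing(T)$, via the discreteness of the singular set of planar Dir-minimizers).
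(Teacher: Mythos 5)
Your reduction to the branch set and your Steps 1--2 are in line with the actual strategy, but the way you close the argument in Step 3 contains a genuine gap. You assert that after the frequency blow-up ``$x_0$ persists as a genuine singular point of $N_\infty$ at which all $Q$ sheets meet'', and you then invoke the dimension bound for Dir-minimizers plus ``a final Federer-type reduction carried out directly on $\Sing_f(T)$''. Pointwise persistence of the singularity is precisely what is \emph{not} available: the rescaled normal approximations converge only strongly in $L^2$, and a branch point of $T$ may perfectly well correspond, in the limit, to a point where all sheets of $N_\infty$ collapse onto the (identically vanishing) average, i.e.\ a point of $\{N_\infty = Q\a{0}\}$; such a point is in general a \emph{regular} collapsed point of the $Q$-valued limit, and in any case that set is only known to be $\cH^{m-2+\alpha}$-null, not empty, so a single singular point of $T$ yields neither a singularity of $N_\infty$ nor any contradiction. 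Likewise, a Federer dimension reduction run directly on the branch set does not get off the ground: at a flat singular point the tangent cone of the current is a multiplicity-$Q$ plane carrying no information, and the auxiliary objects (center manifold, normal approximation, frequency) are not dilation-closed in the way Federer's argument needs, nor is there an upper semicontinuous quantity detecting branch points. The proof must instead be run by contradiction at the level of measure: assume $\cH^{m-2+\alpha}(\Sing(T))>0$, select via Almgren's stratification and a density-point argument (Proposition~\ref{p:seq}) one point and one sequence of radii along which the tangent cone is flat \emph{and} $\cH^{m-2+\alpha}_\infty(\rD_Q(T_{0,r_k})\cap\B_1)$ stays bounded below, and then prove a \emph{measure-theoretic} persistence: combining the persistence-of-$Q$-points estimate (Theorem~\ref{t:persistence}), the H\"older continuity of Dir-minimizers and a covering argument, a singular set of positive $\cH^{m-2+\alpha}$ measure would force too much Dirichlet energy of the blow-up, contradicting $\cH^{m-2+\alpha}_\infty(\{N^b_\infty=Q\a{0}\})=0$ together with the reverse Sobolev bound.

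A second, smaller gap: you run the frequency function on a single center manifold around $x_0$. A center manifold only approximates the current down to the scale at which the stopping criterion triggers, so one needs the intervals of flattening, a whole \emph{sequence} of center manifolds, and the splitting-before-tilting estimates to show that the frequency remains bounded when passing from one center manifold to the next (Theorem~\ref{t:boundedness}); without this the existence of a nontrivial homogeneous blow-up is not justified. Relatedly, $T$ coincides with the graph of $N$ only on a large coincidence set $\cK$, not over all of $\cM$, and it is the superlinear smallness of the resulting errors (Theorem~\ref{t:approxN}) that makes the first-variation and frequency estimates close.
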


In the next pages I will give an overview of the new proof of Theorem~\ref{t:main}
given in collaboration with C.~De Lellis in a series of papers
\cite{DS1, DS2, DS3, DS4, DS5}.
Although our proof is considerably simpler than the original one,
it remains quite involved:
this text is, therefore, meant as
a survey of the techniques and the various steps
of the proof, and can be
considered an introduction to the reading of the papers \cite{DS3, DS4, DS5}.

\begin{remark}
The interior partial regularity can be proven for integer rectifiable currents
in a Riemannian manifold $M$.
In \cite{Alm} Almgren proves the result for $C^5$ regular ambient
manifolds $M$, while our papers \cite{DS3, DS4, DS5} extend this result to $C^{3,\alpha}$ regular manifolds. 
\end{remark}

\addtocontents{toc}{\protect\setcounter{tocdepth}{1}}

\subsection*{Further notation and terminology}
Given an $m$-dimensional integer rectifiable current $T = (R, \tau, \theta)$, we shall often use the
following standard notation:
\begin{gather*}
\|T\| := |\theta| \, \cH^m \res R,\quad
\vec T := \tau\quad\text{and}\quad
\supp(T) := \supp(\|T\|).
\end{gather*}
The regular and the singular part of a current are defined as follows.
\begin{multline*}
\Reg (T)  := \big\{x\in \supp (T) \;:\; \supp(T)\cap  B_r (x) \textup{ is induced by a smooth}\\
\textup{  submanifold for some $r>0$}\big\},
\end{multline*}
\begin{equation*}
\Sing (T)  := \supp (T) \setminus \big(\supp (\partial T) \cup \Reg (T)\big).
\end{equation*}

\subsection*{Acknowledgements}
I am very grateful to A.~Marchese, for reading a first draft of
these lecture notes and suggesting many precious improvements.

%
%
\section{The blowup argument: a glimpse of the proof}
\addtocontents{toc}{\protect\setcounter{tocdepth}{2}}

The main idea of the proof of Theorem~\ref{t:main} is to
detect the singularities of an area minimizing current
by a blowup analysis.
For any $r>0$ and $x\in \R^{m+n}$, let $\iota_{x,r}$ denote the map
\[
\iota_{x,r}: y \mapsto \frac{y-x}{r},
\]
and set $T_{x, r} := (\iota_{x, r})_{\sharp} T$, where $_\sharp$ is the push-forward operator, namely
\[
(\iota_{x, r})_{\sharp} T (\omega) := T (\iota_{x, r}^*  \omega) \quad \forall\; \omega \in \sD^m.
\]
By the classical monotonicity formula
(see, e.g., \cite[Section 5]{All}), for every $r_k\downarrow 0$ and $x\in \supp(T) \setminus \supp(\de T)$,
there exists a subsequence (not relabeled) such that
\[
T_{x, r_k}\weak S,
\] 
where $S$ is a cone without boundary (i.e.~$S_{0,r} = S$ for all $r>0$ and $\partial S=0$)
which is locally area minimizing in $\R^{m+n}$.
Such a cone will be called, as usual, {\em a tangent cone to $T$ at $x$}.

\medskip

The idea of the blowup analysis dates back to De Giorgi's pioneering paper \cite{DG} and has
been used in the context of codimension one currents to recognize singular
points and regular points, because in this case the tangent cones to singular and regular points are in fact different.

\subsection{Flat tangent cones do not imply regularity}
This is not the case for higher codimension currents.
In order to illustrate this point, let us consider 
the current $T_{\mathscr{V}}$
induced by the complex curve considered  above:
\[
\mathscr{V} = \big\{(z,w) \;:\; z^3 = w^2, \; |z|\leq 1 \big\} \subset \C^2 \simeq \R^4.
\]
It is simple to show that $T_{\mathscr{V}}$
is an area minimizing integer rectifiable current (cp.~\cite[5.4.19]{Fed}),
which is singular in the origin. 
Nevertheless, the unique tangent cone to $T_{\mathscr{V}}$ at $0$
is the current $S=(\R^2 \times \{0\}, e_1\wedge e_2, 2)$
which is associated to the integration on the horizontal plane $\R^2 \times \{0\} \simeq \{w=0\}$
with multiplicity two.
The tangent cone is actually regular, although the origin is a singular point!

\subsection{Non-homogeneous blowup}
One of the main ideas by Almgren is then to extend this reasoning to different types of blowups,
by rescaling differently the ``horizontal directions'', namely those of a flat tangent cone at the point,
and the ``vertical'' ones, which are the orthogonal complement to the former.
In this way, in place of preserving the geometric properties of the rectifiable current $T$, one is
led to preserve the \textit{energy} of the associated \textit{multiple valued function}.

In order to explain this point, let us consider again the current $T_{\mathscr{V}}$.
The support of such current, namely the complex curve ${\mathscr{V}}$, can be viewed as the graph
of a function which associates to any $z \in \C$ with $|z|\leq 1$ \textit{two} points in
the $w$-plane:
\begin{equation}\label{e:multi}
z \mapsto \{w_1(z), w_2(z) \} \quad \text{with $w_i(z)^2=z^3$ for $i=1, 2$.}
\end{equation}
Then the right rescaling according to Almgren is the one producing
in the limit
a multiple valued \textit{harmonic function} preserving the \textit{Dirichlet energy}
(for the definitions see the next sections).
In the case of $\mathscr{V}$, the correct rescaling is the one fixing $\mathscr{V}$.
For every $\lambda>0$, we consider
$\Phi_\lambda : \C^2 \to \C^2$ given by
\[
\Phi_\lambda( z,w) = (\lambda^2\,z,  \lambda^3\,w),
\]
and note that $(\Phi_\lambda)_\sharp T_\mathscr{V} = T_\mathscr{V}$ for every $\lambda>0$.
Indeed, in the case of $\mathscr{V}$ the functions $w_1$ and $w_2$, being the two determinations
of the square root of $z^3$, are already harmonic functions
(at least away from the origin).

\subsection{Multiple valued functions}

Following these arguments, we have then to face the problem of defining 
harmonic multiple valued functions, and to study their singularities.
Abstracting from the above example, we consider the multiple valued functions
from a domain in $\R^m$ which take a fixed number $Q \in \N \setminus\{0\}$
of values in $\R^n$.
This functions will be called in the sequel \textit{$Q$-valued functions}.

The definition of harmonic $Q$-valued functions is a simple issue
around any ``regular point'' $x_0 \in \R^m$, for 
it is enough to consider just the superposition of
classical harmonic functions (possibly with a constant integer multiplicity), i.e.
\begin{equation}\label{e:reg}
\R^m \supset B_r(x_0)\ni x \mapsto \{u_1(x), \ldots, u_Q(x)\} \in (\R^n)^Q,
\end{equation}
with $u_i$ harmonic and either $u_i = u_j$ or $u_i(x) \neq u_i(x)$ for every $x \in B_r(x_0)$.

\medskip

The issue becomes much more subtle around the singular points. As it is clear from
the example \eqref{e:multi}, in a neighborhood of the origin there is no representation of
the map $z \mapsto \{w_1(z), w_2(z) \}$ as in \eqref{e:reg}.
In this case the two values $w_1(z)$ and $w_2(z)$ cannot be ordered in a consistent way
(due to the \textit{branch point} at $0$),
and hence cannot be distinguished one from the other.
We are then led to consider a multiple valued function as a map taking $Q$ values
in the quotient space $(\R^n)^Q/\sim$
induced by the symmetric group $\Pe_Q$ of permutation of $Q$ indices:
namely, given points $P_i, S_i\in \R^n$,
\[
(P_1, \ldots, P_Q) \sim (S_1, \ldots, S_Q)
\]
if there exists $\sigma \in \Pe_Q$ such that $P_i = S_{\sigma(i)}$ for every $i=1, \ldots, Q$.

Note that the space $(\R^n)^Q/\sim$ is a \textit{singular metric space} (for a naturally
defined metric, see the next section).
Therefore, harmonic maps with values in $(\R^n)^Q/\sim$ have to be carefully defined,
for instance by using the metric theory of harmonic functions developed in 
\cite{GrSc, Jost, KoSc1} (cp.~also \cite{DS1, LoSp}).

\begin{remark}
Note that the integer rectifiable current induced by the graph of a $Q$-valued function
(under suitable hypotheses, cp.~\cite[Proposition~1.4]{DS2})
belongs to a subclass of currents, sometimes called
``positively oriented'', i.e.~such that the tangent planes make at almost
every point a positive angle with a fixed plane.
Nevertheless, as it will become clear along the proof, it is enough to consider
this subclass as model currents in order to conclude Theorem~\ref{t:main}.
\end{remark}

\subsection{The need of centering}
A major geometric and analytic problem has to be addressed in the blowup
procedure sketched above.
In order to make it apparent, let us discuss another example.
Consider the complex curve $\mathscr{W}$ given by
\[
\mathscr{W} = \big\{(z,w) \;:\; (w-z^2)^2 = z^5, \; |z|\leq 1 \big\} \subset \C^2.
\]
As before, $\mathscr{W}$ can be associated to
an area minimizing integer rectifiable current $T_{\mathscr{W}}$ in $\R^4$, which is singular at the origin.
It is easy to prove that
the unique tangent plane to $T_{\mathscr{W}}$ at $0$ is the plane $\{w=0\}$ taken with multiplicity two.
On the other hand, by simple analytical considerations, the only nontrivial inhomogeneous blowup in these
vertical and horizontal coordinates is given by
\[
\Phi_\lambda( z,w) = (\lambda\,z,  \lambda^2\,w),
\]
and $(\Phi_\lambda)_\sharp T_{\mathscr{W}}$ 
converges as $\lambda\to +\infty$ to the current induced by the \textit{smooth} complex curve
$\{w=z^2\}$ taken with multiplicity two.
In other words, the inhomogeneous blowup did not produce in the limit any
singular current and cannot be used to study the singularities of $T_{\mathscr{W}}$.

\medskip

For this reason it is essential to ``renormalize'' $T_{\mathscr{W}}$ by
averaging out its regular first expansion,
on top of which the singular branching behavior happens.
In the case we handle, the regular part of $T_{\mathscr{W}}$ is exactly the smooth
complex curve $\{w=z^2\}$, while the singular branching is due to the determinations
of the square root of $z^5$.
It is then clear why one can look for parametrizations of $\mathscr{W}$
defined in $\{w=z^2\}$, 
so that the singular map to be considered reduces to
\[
z \mapsto \{u_1(z), u_2(z)\} \quad \text{with $u_1(z)^2 = z^5$.}
\]

\medskip

The regular surface $\{w=z^2\}$ is called \textit{center manifold} by Almgren,
because it behaves like (and in this case it is exactly) the average of the
sheets of the current in a suitable system of coordinates.
In general the determination of the center manifold is not straightforward as
in the above example, and actually constitutes the most intricate part of the proof.

\subsection{Excluding an infinite order of contact}
Having taken care of the geometric problem of the averaging,
in order to be able to perform successfully the
inhomogeneous blowup, one has to be sure that the first singular
expansion of the current around its regular part does not occur with
an infinite order of contact, because in that case the blowup would be by
necessity zero.

\medskip

This issue involves one of the most interesting and original ideas of F.~Almgren,
namely a new monotonicity formula for the so called \textit{frequency function}
(which is a suitable ratio between the energy and a zero degree norm of
the function parametrizing the current).
This is in fact the right monotone quantity for the inhomogeneous blowups
introduced before, and it allows to show that the first singular
term in the ``expansion'' of the current does not occur with infinite order of contact
and actually leads to a nontrivial limiting current.

\subsection{The persistence of singularities}
Finally, in order to conclude the proof we need to assure that the singularities
of the current do transfer to singularities of the limiting multiple valued
function, which can be studied with more elementary techniques.
This is in general not true in a pointwise sense,
but it becomes true in a measure theoretic sense as soon as the singular set
is supposed to have positive $\cH^{m-2+\alpha}$ measure, for some $\alpha>0$.

\medskip

The contradiction is then reached in the following way: starting from
an area minimizing current with a big singular set ($\cH^{m-2+\alpha}$ positive
measure), one can perform the analysis outlined before and will end up
with a multiple valued function having a big set of singularities, thus giving
the desired contradiction.

\subsection{Sketch of the proof}\label{ss:sketch}
The rigorous proof of Theorem~\ref{t:main} is actually much more involved and
complicated than the rough outline given in the previous section,
and can be found either in \cite{Alm} or in
the recent series of papers \cite{DS1, DS2, DS3, DS4, DS5}.
In this lecture notes we give some more details of this recent new proof,
and comments on some of the subtleties which were hidden in
the general discussion above.
Since the proof is very lengthly, we start with a description of the strategy.

\bigskip

The proof is done by contradiction.
We will, indeed, always assume the following in the sequel.

\medskip

\textbf{Contradiction assumption:} there exist numbers
$m\geq 2$, $n \geq 1$, $\alpha>0$
and an area minimizing $m$-dimensional integer rectifiable current
$T$ in $\R^{m+n}$ such that
\[
\cH^{m-2+\alpha}(\Sing(T)) > 0.
\]

\medskip

Note that the hypothesis $m\geq 2$ is justified because, for $m=1$
an area minimizing current
is locally the union of finitely many non-intersecting open segments.

\medskip

The aim of the proof is now to show that there exist suitable points
of $\Sing(T)$ where we can perform the blowup analysis outlined in the
previous section.
This process consists of different steps, which we next list
in a way which does not require the introduction of new notation
but needs to be further specified later.

\medskip

\noindent \textbf{(A)} Find a point $x_0 \in \Sing(T)$ and a sequence of radii
$(r_k)_k$ with $r_k \downarrow 0$ such that:
\begin{itemize}
\item[(A$_1$)] the rescaling currents $T_{x_0,r_k} := (\iota_{x_0,r_k})_\sharp T$
converge to a flat tangent cone;
\item[(A$_2$)] $\cH^{m-2+\alpha}(\Sing(T_{x_0,r_k})\cap B_1) > \eta >0$ for some
$\eta>0$ and for every $k \in \N$.
\end{itemize}
Note that both conclusions hold for suitable subsequences, which in principle may
not coincide. What we need to prove is that we can select a point
and a subsequence satisfying both.

\medskip 

\noindent \textbf{(B)} Construction of the center manifold $\cM$ and of a
normal Lipschitz approximation $N:\cM \to \R^{m+n}/\sim$.

This is the most technical part of the proof, and most of the conclusions
of the next steps will intimately depend on this construction.

\medskip 

\noindent \textbf{(C)} The center manifold that one constructs in step (B) can only be
used in general for a finite number of radii $r_k$ of step (A).
The reason is that in general its degree of approximation
of the average of the minimizing currents $T$ is under control
only up to a certain distance from the singular point under consideration.
This leads us to define the sets where the approximation works, called
in the sequel \textit{intervals of flattening}, and to define an entire
\textit{sequence of center manifolds} which will be used in the blowup
analysis.

\medskip 

\noindent \textbf{(D)} Next we will take care of the problem of the infinite order of contact.
This is done in two part. For the first one we derive the \textit{almost
monotonicity formula} for a variant of Almgren's frequency function,
deducing that the order of contact remains finite within each center manifold
of the sequence in (C).

\medskip 

\noindent \textbf{(E)} Then one needs to compare different center manifolds and
to show that the order of contact still remains finite.
This is done by exploiting a deep consequence of the construction in (C)
which we call \textit{splitting before tilting} after the inspiring paper by T.~Rivi{\`e}re \cite{Ri04}.

\medskip 

\noindent \textbf{(F)} With this analysis at hand, we can pass into the limit our
blowup sequence and conclude the convergence to the graph of a harmonic
$Q$-valued function $u$.

\medskip 

\noindent \textbf{(G)} Finally, we discuss the capacitary argument leading to the persistence
of the singularities, to show that the 
function $u$ in (F) needs to have a singular set with positive $\cH^{m-2+\alpha}$ measure,
thus contradicting the partial regularity estimate for such multiple valued
harmonic functions.

\medskip

In the remaining part of this course we give a more detailed description
of the steps above, referring to the original papers
\cite{DS1, DS2, DS3, DS4, DS5} for the complete proofs.

%
%
\section{$Q$-valued functions and rectifiable currents}
Since the final contradiction argument relies on the
regularity theory of multiple valued functions, we start recalling the
main definitions and results concerning them,
and the way they can be used to approximate integer rectifiable currents.
The reference for this part of the theory is \cite{DS1,DS2,DS3,Sp10}.

\subsection{$Q$-valued functions}
We start by giving a metric structure to the
space $(\R^n)^Q/\sim$ of unordered $Q$-tuples of points in $\R^n$,
where $Q \in \N\setminus\{0\}$ is a fixed number.
It is immediate to see that
this space can be identified with the subset of positive measures
of mass $Q$ which are the sum of integer multiplicity Dirac delta:
\[
(\R^n)^Q/\sim \quad \simeq \quad \Iq(\R^n) := \left\{\sum_{i=1}^Q \a{P_i}\;:\; P_i \in \R^n\right\}, 
\]
where $\a{P_i}$ denotes the Dirac delta at $P_i$.
We can then endow $\Iq$ with one of the distances
defined for (probability) measures, for example the
Wasserstein distance of exponent two: for every $T_1=\sum_i\a{P_i}$ and $T_2=\sum_i\a{S_i} \in \Iqs$, we set  
\begin{equation*}
\cG(T_1,T_2):=\min_{\sigma\in\Pe_Q}\sqrt{\sum_{i=1}^Q\abs{P_i-S_{\sigma(i)}}^2},
\end{equation*}
where we recall that $\Pe_Q$ denotes the symmetric group of $Q$ elements.

\medskip

A $Q$-function simply a map $f:\Omega\to \Iqs$, where $\Omega \subset\R^m$ is an
open domain. We can then talk about measurable (with respect to the Borel $\sigma$-algebra of $\Iqs$),
bounded, uniformly-, H\"older- or Lipschitz-continuous $Q$-valued functions.

More importantly, following the pioneering approach to weakly differentiable functions with
values in a metric space by L.~Ambrosio \cite{Amb}, we can also define
the class of Sobolev $Q$-valued functions $W^{1,2}$.

\begin{definition}[Sobolev $Q$-valued functions]\label{d:W1p}
Let $\Omega\subset\R^m$ be a bounded open set.
A measurable function $f:\Omega\to\Iq$ is in the Sobolev class
$W^{1,2}$ if there exist $m$ functions $\varphi_j\in L^2(\Omega)$ for 
$j=1, \ldots, m$, such that 
\begin{itemize}
\item[(i)] $x\mapsto\cG (f(x),T)\in W^{1,2}(\Omega)$ for all $T\in \Iq$;
\item[(ii)] $\abs{\de_j\, \cG (f, T)}\leq\varphi_j$ almost everywhere in $\Omega$
for all $T\in \Iq$ and for all $j\in\{1, \ldots, m\}$,
where $\de_j\cG(f,T)$ denotes the weak partial derivatives
of the functions in (i).
\end{itemize}
\end{definition}

By simple reasonings, one can infer
the existence of minimal functions $|\de_j f|$ fulfilling (ii):
\begin{equation*}
|\de_j f|\leq\varphi_j\; \text{a.e.}\;\text{for any other $\varphi_j$ satisfying (ii),}
\end{equation*}
We set
\begin{equation}\label{e:def|Df|}
|Df|^2 :=\sum_{j=1}^m\abs{\de_jf}^2,
\end{equation}
and define the Dirichlet energy of a $Q$-valued function as
(cp.~also \cite{Jost, KoSc1, LoSp} for alternative definitions)
\[
\textup{Dir}(f) := \int_{\Omega} |Df|^2.
\]
A $Q$-valued function $f$ is said \textit{Dir-minimizing} if 
\begin{gather}\label{e:Dirichlet}
\int_\Omega |Df|^2 \leq \int_{\Omega}|Dg|^2\\
\text{for all $g\in W^{1,2}(\Omega,\Iq)$ with } \cG(f,g)|_{\partial \Omega}=0,\notag
\end{gather}
where the last inequality is meant in the sense of traces.

\medskip

The main result in the theory of $Q$-valued functions is the
following.

\begin{theorem}\label{t:Qvalued}
Let $\Omega\subset \R^m$ be a bounded open domain with Lipschitz boundary,
and let $g \in W^{1,2}(\Omega,\Iqs)$ be fixed.
Then, the following holds.

\begin{itemize}
\item[(i)] There exists a Dir-minimizing function $f$ solving the minimization
problem \eqref{e:Dirichlet}.

\item[(ii)] Every such function $f$ belongs to $C^{0,\kappa}_{\loc} (\Omega,\Iqs)$
for a dimensional constant $\kappa=\kappa(m, Q)>0$.

\item[(iii)] For every such function $f$, $|Df| \in L^p_{\loc}(\Omega)$ for some dimensional
constant $p=p(m,n,Q)>2$.

\item[(iv)] There exists a relatively
closed set $\Sing(u) \subset \Omega$ of Hausdorff dimension at most $m-2$
such that the graph of $u$ outside 
$\Sing(u)$, i.e.~the set
\[
\graph(u\vert_{\Omega\setminus \Sigma} = \left\{ (x, y) \; :\; 
x \in \Omega\setminus\Sigma,\; y \in \supp(u(x)) \right\},
\]
is a smoothly embedded $m$-dimensional submanifold of $\R^{m+n}$.
\end{itemize}
\end{theorem}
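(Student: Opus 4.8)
The plan is to reduce, wherever possible, every statement to classical (vector-valued) Sobolev theory by means of Almgren's bi-Lipschitz embedding: there are an integer $N=N(Q,n)$, a Lipschitz map $\xii\colon\Iqs\to\R^N$ that is bi-Lipschitz onto its image $\cQ:=\xii(\Iqs)$, a Lipschitz retraction $\ro\colon\R^N\to\cQ$, and the embedding is ``energy preserving'', i.e.\ $|D(\xii\circ f)|=|Df|$ a.e.\ for every $f\in W^{1,2}(\Omega,\Iqs)$. Granting this, for part (i) I would run the direct method. Given a minimizing sequence $(f_k)$ for \eqref{e:Dirichlet}, the Poincar\'e inequality for $Q$-valued functions (apply the scalar Poincar\'e inequality to $x\mapsto\cG(f_k(x),T_0)$ for a fixed $T_0$, using (i) of Definition~\ref{d:W1p}) gives a uniform $W^{1,2}$ bound; the compact embedding $W^{1,2}\hookrightarrow L^2$ for metric-space-valued maps---obtained by composing with $\xii$ and extracting a subsequence---yields $f_k\to f$ in $L^2$ and a.e.; weak lower semicontinuity of $\D$ along this convergence (again reduced to the classical lower semicontinuity of $g\mapsto\int|Dg|^2$ applied to $\xii\circ f_k$), together with stability of the trace constraint, shows that $f$ is Dir-minimizing.

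For part (ii) the H\"older continuity would follow from a Morrey-type decay of the Dirichlet energy,
\[
\D(f,B_r(x))\le C\Big(\tfrac{r}{s}\Big)^{m-2+2\kappa}\D(f,B_s(x)),\qquad 0<r<s,
\]
with $\kappa=\kappa(m,Q)>0$, from which the Campanato criterion gives $f\in C^{0,\kappa}_{\loc}$. To prove the decay I would compare $f$ on a generic ball $B_s(x)$---after translating so that $\etaa\circ f$ has vanishing mean---with the competitor $\ro\circ h$, where $h$ is the classical $\R^N$-valued harmonic extension to $B_s(x)$ of the boundary trace $\xii\circ f|_{\partial B_s(x)}$ (legitimate for a.e.\ $s$): the contraction property of $\ro$ bounds $\D(f,B_s(x))$ by $\D(h,B_s(x))$, and the explicit energy decay of harmonic extensions (spherical-harmonic expansion) closes a differential inequality for $r\mapsto\D(f,B_r(x))$ that integrates to the displayed estimate.

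For part (iii) I would prove a reverse H\"older inequality and invoke Gehring's lemma. Comparing $f$ on $B_r(x)$ with a competitor built from its boundary trace (its homogeneous extension, composed with $\ro$) gives, after a hole-filling argument, a Caccioppoli-type bound on $\int_{B_{r/2}(x)}|Df|^2$ in terms of $r^{-2}\int_{B_r(x)}\cG(f,Q\a{0})^2$; feeding this into the Sobolev--Poincar\'e inequality for $Q$-valued functions produces
\[
\Big(\frac{1}{|B_{r/2}|}\int_{B_{r/2}(x)}|Df|^{2}\Big)^{1/2}\le C\Big(\frac{1}{|B_{r}|}\int_{B_{r}(x)}|Df|^{2^{*}_{-}}\Big)^{1/2^{*}_{-}}
\]
for some exponent $2^{*}_{-}<2$, and Gehring's lemma upgrades this to $|Df|\in L^{p}_{\loc}$ for some $p=p(m,n,Q)>2$.

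The delicate statement is part (iv), and this is where I expect the real work. The central tool is Almgren's \emph{frequency function}: fixing $x$, translating so that $\etaa\circ f(x)=0$, and setting $D_x(r):=\int_{B_r(x)}|Df|^2$, $H_x(r):=\int_{\partial B_r(x)}\cG(f,Q\a{0})^2$ and $I_x(r):=rD_x(r)/H_x(r)$, one shows that $r\mapsto I_x(r)$ is monotone non-decreasing for Dir-minimizers, by combining the first \emph{outer} variations (testing minimality with $f_t=\ro(\xii\circ f+t\psi)$) with the first \emph{inner} variations (domain deformations $f_t=f\circ(\mathrm{id}+tX)$). Monotonicity of $I_x$, together with the compactness from part (i), forces the rescalings $f_{x,r}$ ($\cG$-normalized dilations of $f$ about $x$) to subconverge to a nonzero $I_x(0^+)$-homogeneous Dir-minimizer. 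Defining $\Sing(f)$ as the complement of the open set of points near which $f=\sum_i\a{u_i}$ with $u_i$ classical harmonic functions and the sheets pairwise identical or disjoint, one then runs a Federer--Almgren dimension reduction: the base cases are $m=1$, where $\Sing(f)=\emptyset$ (a one-dimensional Dir-minimizer is, after relabelling, a superposition of non-crossing affine functions), and $m=2$, where $\Sing(f)$ is discrete (classification of homogeneous two-dimensional Dir-minimizers, essentially multivalued harmonic polynomials such as the determinations of $z\mapsto z^{k/Q'}$); the inductive step blows up at a point of $\Sing(f)$ of maximal dimension, translates along $\Sing(f)$ to produce a translation-invariant (hence cylindrical) homogeneous example, and thereby lowers $m$. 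This yields $\dimh(\Sing(f))\le m-2$. Finally, outside $\Sing(f)$ the local decomposition $f=\sum_{i=1}^{Q}\a{u_i}$ into classical harmonic functions with pairwise identical or disjoint sheets shows that $\graph(f)$ is there a smooth embedded $m$-dimensional submanifold. The main obstacle is precisely this last part: establishing monotonicity of the frequency in the singular metric space $\Iqs$---where the variations must be carried carefully through $\xii$ and $\ro$---and then pushing through the dimension reduction, which requires both an induction on the number $Q$ of sheets and the classification of low-dimensional homogeneous Dir-minimizers.
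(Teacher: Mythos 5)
These lecture notes do not actually prove Theorem~\ref{t:Qvalued}: it is quoted from \cite{DS1, Sp10}, and your outline follows essentially that cited route (Almgren's embedding $\xii$ and retraction $\ro$, direct method, Campanato-type energy decay, reverse H\"older plus Gehring, frequency function, blow-up and dimension reduction). There is, however, a genuine gap in your argument for (ii). You invoke a ``contraction property of $\ro$'', i.e.\ you use the retraction as if it were $1$-Lipschitz. No such retraction is available: the construction of $\ro$ only yields $\Lip(\ro)\leq C(n,Q)$, and in the decay step the constant is everything. With $\Lip(\ro)=L$, minimality compared against $\ro\circ h$, where $h$ is the harmonic extension of $\xii\circ f|_{\partial B_s}$, gives only $\D(f,B_s)\leq L^2\,\D(h,B_s)\leq L^2(m-1)^{-1}\, s\int_{\partial B_s}|\partial_\tau f|^2$, hence the differential inequality $s\,\frac{d}{ds}\D(f,B_s)\geq (m-1)L^{-2}\,\D(f,B_s)$ and a decay exponent $(m-1)/L^2$. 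For $m\geq 3$ this need not exceed $m-2$, so Campanato gives no H\"older exponent at all; the whole point of the proof in \cite{DS1} is to produce a comparison constant strictly better than $(m-2)^{-1}$, and this is done \emph{intrinsically}, by an induction on $Q$ (if the boundary values split into well-separated clusters one recurses on lower $Q$; if not, one interpolates, via the Lipschitz interpolation lemma, between $f|_{\partial B_s}$ and $Q$ copies of the harmonic extension of $\etaa\circ f$ on a slightly smaller sphere), not by composing a vector-valued harmonic extension with $\ro$. The same interpolation device, rather than ``the homogeneous extension composed with $\ro$'', is what yields the Caccioppoli inequality you need in (iii): a homogeneous extension matched to $f$ on $\partial B_r$ produces an inequality of the form $\D(f,B_r)\leq C\,r\,\frac{d}{dr}\D(f,B_r)$, not a bound by $r^{-2}\int_{B_r}\cG(f,Q\a{\etaa\circ f})^2$.

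The remaining parts are a plan rather than a proof, but it is the correct plan: (i) is fine as stated; for (iv) the skeleton (outer and inner variations giving monotonicity of the frequency, compactness and homogeneity of blow-ups, then a Federer-type reduction combined with an induction on $Q$ on the set of multiplicity-$Q$ points) is the one used in \cite{DS1}. Note only that the classification of two-dimensional homogeneous Dir-minimizers is not needed for the bound $\dimh(\Sing(f))\leq m-2$; it enters in the finer two-dimensional results (isolated singularities), which are not part of this statement.
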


\begin{remark}
We refer to \cite{DS1, Sp10} for the proofs and
more refined results in the case of of two dimensional domains.
Moreover, for some results concerning the boundary regularity we refer to \cite{Hirsch14}, and for an improved estimate of the singular set to \cite{FoMaSp}.
\end{remark}

\medskip

We close this section by some considerations on the $Q$-valued functions.
For the reasons explained in the previous section, a $Q$-valued function has to
be considered as an intrinsic map taking values in the non-smooth space of $Q$-points
$\Iq$, and cannot be reduced to a ``superposition'' of a number $Q$ of functions.
Nevertheless, in many situations it is possible to handle
$Q$-valued functions as a superposition.
For example, as shown in \cite[Proposition~0.4]{DS1} every measurable function 
$f: \R^m\to \Iqs$ can be written (not uniquely!) as 
\begin{equation}\label{e:selection}
f(x)=\sum_{i=1}^Q \a{f_i(x)}\quad\text{for $\cH^m$-a.e. }x,
\end{equation}
with $f_1,\ldots,f_Q : \R^m \to \R^n$ measurable functions.

Similarly, for weakly differentiable functions it is possible to define a notion
of pointwise approximate differential (cp.~\cite[Corollary~2,7]{DS1})
\[
Df = \sum_i\a{Df_i} \in \Iq (\R^{n\times m}),
\]
with the property that at almost every $x$ it holds $Df_i(x) = Df_j(x)$ if $f_i(x) = f_j(x)$.
Note, however, that the functions $f_i$ do not need to be weakly differentiable in \eqref{e:selection},
for the $Q$-valued function $f$ has an approximate differential.

\subsection{Graph of Lipschitz $Q$-valued functions}
There is a canonical way to give the structure of integer rectifiable
currents to the graph of a Lipschitz $Q$-valued function.

To this aim, we consider \textit{proper} $Q$-valued functions,
i.e.~measurable functions $F:M \to \Iq(\R^{m+n})$
(where $M$ is any $m$-dimensional submanifold of $\R^{m+n}$)
such that there is a measurable selection $F = \sum_i \a{F_i}$ for which
\[
\bigcup_i \overline{(F_i)^{-1}(K)}
\]
is compact for every compact $K \subset \R^{m+n}$.
It is then obvious that if there exists such a selection,
then {\em every} measurable selection shares the same property.

By a simple induction argument (cp.~\cite[Lemma~1.1]{DS2}),
there are a countable partition of $M$ in bounded measurable subsets $M_i$ ($i\in \N$)
and Lipschitz functions $f^j_i: M_i\to \R^{m+n}$ ($j\in \{1, \ldots, Q\}$) such that
\begin{itemize}
\item[(a)] $F|_{M_i} = \sum_{j=1}^Q \a{f^j_i}$ for every $i\in \N$ and $\Lip (f^j_i)\leq
\Lip (F)$ $\forall i,j$;
\item[(b)] $\forall\; i\in \N$ and $j, j' \in \{1, \ldots ,Q\}$,
either $f_i^j \equiv f_i^{j'}$ or
$f_i^j(x) \neq f_i^{j'}(x)$ $\forall x \in M_i$;
\item[(c)] $\forall \;i$ we have $DF (x) = \sum_{j=1}^Q \a{Df_i^j (x)}$ for a.e. $x\in M_i$.
\end{itemize}

We can then give the following definition.

\begin{definition}[$Q$-valued push-forward]\label{d:push_forward}
Let $M$ be an oriented submanifold of $\R^{m+n}$ of dimension $m$
and let $F:M\to\Iq(\R^{m+n})$ be a proper Lipschitz map.
Then, we define the push-forward of $M$ through $F$ as the current
\[
\mathbf{T}_F = \sum_{i,j} (f_i^j)_\sharp \a{M_i},
\]
where $M_i$ and $f_i^j$ are as above: that is,
\begin{equation}\label{e:push_forward}
\mathbf{T}_F (\omega) := \sum_{i\in\N} \sum_{j=1}^Q \int_{M_i} 
\langle \omega (f_i^j(x)), Df_i^j (x)_\sharp \vec{e}(x)\,\rangle \, d\cH^m(x)
\quad \forall\; \omega \in \sD^m(\R^{n})\, .
\end{equation}
\end{definition}

One can prove that the current in Definition~\ref{d:push_forward} does
not depend on the decomposition chosen for $M$ and $f$, and moreover
is integer rectifiable (cp.~\cite[Proposition~1.4]{DS2})

A particular class of push-forwards are given by graphs.

\begin{definition}[$Q$-graphs]\label{d:Q-graphs}
Let $f= \sum_i \a{f_i} : \R^m \to \Iqs$ be Lipschitz and define 
the map $F: M \to \Iq(\R^{m+n})$ as $F (x):= \sum_{i=1}^Q \a{(x, f_i (x))}$.
Then, $\bT_F$ is  the {\em current associated to the graph $\gr (f)$} and will be denoted by $\bG_f$. 
\end{definition}

\medskip

The main result concerning the push-forward of a $Q$-valued function
is the following (see \cite[Theorem~2.1]{DS2}).

\begin{theorem}[Boundary of the push-forward]\label{t:commute}
Let $M\subset\R^{m+n}$ be an $m$-dimensional submanifold with boundary,
$F:M \to \Iq(\R^{m+n})$ a proper Lipschitz function and $f= F\vert_{\de M}$.
Then, $\partial \mathbf{T}_F = \mathbf{T}_f$.
\end{theorem}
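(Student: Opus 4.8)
The statement is the $Q$-valued analogue of the classical fact that the push-forward along a proper Lipschitz map commutes with $\partial$, and the plan is to reduce it to that classical fact, by induction on $m=\dim M$. Since both sides of the claimed identity are local in $M$ and transform naturally under $C^1$ diffeomorphisms, after passing to charts and a partition of unity one may assume $M$ is a bounded Lipschitz domain in $\R^m$. Record the single-valued version: for proper $g\in\Lip(M,\R^{m+n})$ one has $\partial\,g_\sharp\a{M}=g_\sharp\a{\partial M}$ (the general fact that $g_\sharp$ commutes with $\partial$, cf.~\cite{Fed}, together with Stokes $\partial\a{M}=\a{\partial M}$; alternatively, mollify $g$ and pass to the limit using continuity of $g\mapsto g_\sharp\a{M}$ and of $\partial$). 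Summing this over sheets, the theorem follows whenever $F$ admits a \emph{global Lipschitz selection} $F=\sum_{j=1}^Q\a{F_j}$ with all $F_j\in\Lip(M,\R^{m+n})$. In particular the base case $m=1$ is done, since on a $1$-dimensional manifold with boundary every Lipschitz $Q$-valued function is a superposition of Lipschitz functions (\cite{DS1}).

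For the inductive step $m\ge2$ I would first note that the theorem also holds for $F$ which is \emph{piecewise} of the above form along a finite decomposition $M=\bigcup_i M_i$ into Lipschitz subdomains: writing $\mathbf{T}_F=\sum_i\mathbf{T}_{F|_{M_i}}$ and applying the previous case on each $M_i$, the contributions of the faces interior to $M$ cancel in pairs, because $F$ is Lipschitz on all of $M$, so the unordered $Q$-tuple of its traces on a common face agrees from the two sides, while the two adjacent pieces induce opposite orientations on that face; only the faces on $\partial M$ survive and add up to $\mathbf{T}_f$. Then, for a general proper Lipschitz $F$, I would approximate using Almgren's bi-Lipschitz embedding $\xii\colon\Iq(\R^{m+n})\to\R^N$ and Lipschitz retraction $\ro\colon\R^N\to\xii(\Iq(\R^{m+n}))$ (\cite{DS1}): fix triangulations $\mathcal{T}_k$ of $\overline M$ refining a triangulation of $\partial M$ with mesh tending to $0$, let $g_k$ be affine on each simplex and equal to $\xii\circ F$ at the vertices, and set $F_k:=\xii^{-1}\circ\ro\circ g_k$. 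Standard interpolation estimates together with the Lipschitz continuity of $\xii,\xii^{-1},\ro$ and $F$ give that $F_k$ is proper Lipschitz, $F_k\to F$ uniformly and in $W^{1,2}$, and $F_k|_{\partial M}\to f$; by continuity of the push-forward (\cite{DS2}) one gets $\mathbf{T}_{F_k}\to\mathbf{T}_F$ and $\mathbf{T}_{F_k|_{\partial M}}\to\mathbf{T}_f$, and since $\partial$ is weakly continuous it suffices to prove the identity for each fixed $k$. As $F_k$ is continuous across the faces of $\mathcal{T}_k$, the same face-cancellation reduces this to proving $\partial\mathbf{T}_{F_k|_\sigma}=\mathbf{T}_{F_k|_{\partial\sigma}}$ on a single $m$-simplex $\sigma$.

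The main obstacle is exactly this last point: on $\sigma$ the function $F_k|_\sigma=\xii^{-1}\circ\ro\circ(\text{affine})$ is a perfectly general Lipschitz $Q$-valued function and may carry genuine interior branch points, so it need not be a superposition of Lipschitz maps and the single-valued case does not apply verbatim. I would break it by slicing: for a generic affine function $\ell$ on $\sigma$, for a.e.\ $t$ the slice $\langle\mathbf{T}_{F_k|_\sigma},\ell,t\rangle$ coincides with the push-forward $\mathbf{T}_{F_k|_{\sigma\cap\{\ell=t\}}}$ of the restriction of $F_k$ to the $(m-1)$-dimensional slice (this slicing identity for the $Q$-valued push-forward is itself proved via the decomposition into Lipschitz pieces, reducing to the single-valued slicing formulas), and to that $(m-1)$-dimensional statement the inductive hypothesis applies; combining with the standard identity relating $\langle\partial(\cdot),\ell,t\rangle$ and $\partial\langle\cdot,\ell,t\rangle$, integrating in $t$, and letting $\ell$ range over enough directions to recover the full current, one obtains $\partial\mathbf{T}_{F_k|_\sigma}=\mathbf{T}_{F_k|_{\partial\sigma}}$, closing the induction. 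I expect the delicate content to be entirely in this bookkeeping — that the $Q$-valued slicing identity survives for a.e.\ $t$ across the branch set, and that the slices in finitely many directions determine the current — while the remaining ingredients (naturality, classical Stokes, continuity of the push-forward) are soft.
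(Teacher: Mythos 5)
The paper itself does not prove this statement; it only records it from \cite[Theorem~2.1]{DS2}. Judged on its own terms, your scheme has genuine gaps exactly at the points you defer as ``bookkeeping'', and they are the substance of the theorem. First, the slicing identity driving your induction is not well posed as written: $\mathbf{T}_{F_k|_\sigma}$ is a current in $\R^{m+n}$, whereas $\ell$ is an affine function on the domain simplex $\sigma$, so $\langle \mathbf{T}_{F_k|_\sigma},\ell,t\rangle$ is undefined. Slicing must be performed with an ambient Lipschitz function $\varphi$, and for a single-valued Lipschitz piece $g$ the correct commutation is $\langle g_\sharp S,\varphi,t\rangle=g_\sharp\langle S,\varphi\circ g,t\rangle$: the relevant level sets in the domain are therefore those of $\varphi\circ F_k$, which for a genuinely branched $F_k$ is itself $Q$-valued and whose ``level sets'' are not domain hyperplanes. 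This can only be repaired by first reducing to graph-type maps ($x\mapsto\sum_i\a{(x,(F_k)_i(x))}$, then projecting back), a reduction you never perform. Second, even after that repair, your closing step --- integrate in $t$ and let $\ell$ range over enough directions to ``recover the full current'' --- needs a uniqueness-from-slices theorem applied to $R:=\partial\mathbf{T}_{F_k|_\sigma}-\mathbf{T}_{F_k|_{\partial\sigma}}$. The statements of this kind available (for normal currents, or for flat chains of finite mass) require a priori that $R$ has locally finite mass, and finiteness of $\mass\,(\partial\mathbf{T}_{F_k|_\sigma})$ is not known in advance: it is essentially what the theorem delivers. So the induction does not close at interior branch points, which is precisely where Almgren's and De Lellis--Spadaro's arguments must work hard.

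There is a further gap in the approximation step. ``Continuity of the push-forward'' is not a soft fact here: the integrand in \eqref{e:push_forward} is a degree-$m$ polynomial in the differentials, so $\mathbf{T}_{F_k}\weak\mathbf{T}_F$ requires a.e.\ (or strong $L^p$) convergence of $DF_k$ to $DF$, not merely uniform convergence with equibounded Lipschitz constants; your $F_k=\xii^{-1}\circ\ro\circ g_k$ involves composition with the merely Lipschitz retraction $\ro$, for which no chain rule or convergence of differentials is available, and quoting a continuity lemma from \cite{DS2} would be circular if that lemma is itself obtained (via homotopy-type arguments) from the boundary theorem you are proving. Note also that the triangulation buys nothing: $F_k$ restricted to a single simplex is again a completely general Lipschitz $Q$-valued map, as you acknowledge, so the entire weight of the proof rests on the defective slicing induction. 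The sound parts --- locality, the single-valued case, the interior-face cancellation (legitimate because $\mathbf{T}_f$ is independent of the selection), and the one-dimensional Lipschitz selection for the base case --- do not touch the core difficulty.
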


\medskip

Moreover, the following Taylor expansion of the mass of
a graph holds (cp.~\cite[Corollary~3.3]{DS2}).

\begin{proposition}[Expansion of $\mass (\bG_f)$] \label{c:taylor_area}
There exist dimensional constants $\bar c, C>0$ 
such that, if $\Omega\subset \R^m$ is a bounded open set
and $f:\Omega \to \Iqs$ is a Lipschitz map with $\Lip (f)\leq \bar{c}$, then
\begin{equation}\label{e:taylor_grafico}
\mass (\mathbf{G}_f) = Q |\Omega| + \frac{1}{2} \int_\Omega |Df|^2 + \int_\Omega \sum_i \bar{R}_4 (Df_i)\, ,
\end{equation}
where $\bar{R}_4\in C^1(\R^{n\times m})$
satisfies $|\bar{R}_4 (D)|= |D|^3\bar{L} (D)$ for $\bar{L}:\R^{n\times m} \to \R$
Lipschitz with $\Lip (\bar{L})\leq C$ and
$\bar L(0) = 0$.
\end{proposition}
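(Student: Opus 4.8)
The statement to be proven is Proposition~\ref{c:taylor_area}, the Taylor expansion of the mass of a $Q$-valued graph. The plan is to reduce the $Q$-valued statement to a pointwise algebraic computation for single-valued Lipschitz functions, and then sum over the sheets.

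\textbf{Proof proposal.} The statement to establish is Proposition~\ref{c:taylor_area}. The plan is to reduce the $Q$-valued mass to a classical area integral, run a pointwise Taylor expansion of the graph Jacobian sheet by sheet, and sum up.

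First I would recall that, by the construction of the push-forward and \cite[Proposition~1.4]{DS2}, when $\Lip(f)\leq\bar c$ is small enough the current $\bG_f$ is \emph{positively oriented}: every tangent $m$-plane projects onto $\R^m\times\{0\}$ with positive Jacobian. Hence there is no cancellation in the mass, and, using a measurable selection $f=\sum_{i=1}^Q\a{f_i}$ (see \cite[Lemma~1.1]{DS2}) together with the area formula for the graph maps $x\mapsto(x,f_i(x))$, one obtains
\[
\mass(\bG_f)=\int_\Omega\sum_{i=1}^Q J\big(Df_i(x)\big)\,dx,\qquad
J(D):=\sqrt{\det\!\big(\mathrm{Id}_m+D^T D\big)}\quad(D\in\R^{n\times m}),
\]
the integrand being independent of the chosen decomposition $\{M_i,f_i^j\}$ by the same proposition.

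The heart of the matter is the pointwise expansion of $J$ near $D=0$. Writing $\lambda_1,\dots,\lambda_m\geq 0$ for the eigenvalues of $D^T D$, one has $\det(\mathrm{Id}_m+D^T D)=\prod_k(1+\lambda_k)=1+\sum_k\lambda_k+P(D)$, where $\sum_k\lambda_k=\trac(D^T D)=|D|^2$ and $P(D)=\sum_{j\geq 2}e_j(\lambda)$ is a polynomial in the entries of $D$ vanishing to order $\geq 4$ at the origin. Since the radicand is always $\geq 1$, $J$ is smooth (indeed real-analytic) on all of $\R^{n\times m}$, and a direct Taylor computation gives
\[
J(D)=\sqrt{1+|D|^2+P(D)}=1+\tfrac12|D|^2+\bar{R}_4(D),
\]
where $\bar{R}_4(D):=J(D)-1-\tfrac12|D|^2$ is smooth and vanishes to order $\geq 4$ at $D=0$. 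Writing $\bar{R}_4(D)=\sum_{|\alpha|=4}D^\alpha R_\alpha(D)$ by Taylor's theorem with $R_\alpha$ smooth, and observing that each $D^\alpha/|D|^3$ is positively $1$-homogeneous and smooth away from $0$, hence Lipschitz, one gets that $\bar L(D):=|\bar{R}_4(D)|/|D|^3$ (with $\bar L(0):=0$) is Lipschitz near the origin with $\bar L(0)=0$; multiplying by a cut-off supported near $0$ — harmless since we only ever evaluate it at $D=Df_i$ with $|Df_i|\leq\bar c$ — we may take $\bar{R}_4\in C^1(\R^{n\times m})$ and $\bar L$ Lipschitz on all of $\R^{n\times m}$ with $|\bar{R}_4(D)|=|D|^3\bar L(D)$.

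Finally I would insert this expansion into the area formula and sum over $i$: the constant terms give $\int_\Omega\sum_{i=1}^Q 1=Q|\Omega|$; the quadratic terms give $\tfrac12\int_\Omega\sum_i|Df_i|^2=\tfrac12\int_\Omega|Df|^2$ by the definition \eqref{e:def|Df|} of $|Df|^2$ together with the identity $|Df|^2=\sum_i|Df_i|^2$, valid a.e.\ for a selection; and the remainders give $\int_\Omega\sum_i\bar{R}_4(Df_i)$, which is precisely \eqref{e:taylor_grafico}. I expect no deep obstacle here: the expansion is elementary calculus and the remainder is routine bookkeeping with $Q$-valued selections. The fiddliest point is the structural claim on $\bar{R}_4$ — producing a single globally Lipschitz $\bar L$ with $\bar L(0)=0$ and $|\bar{R}_4(D)|=|D|^3\bar L(D)$ — and here the smallness hypothesis $\Lip(f)\leq\bar c$ is exactly what lets one localize near $D=0$, both for this factorization and for ruling out cancellation in $\mass(\bG_f)$.
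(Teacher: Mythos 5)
Your proposal is correct and follows essentially the same route as the source proof (this proposition is quoted from \cite[Corollary~3.3]{DS2}): there the mass of $\bG_f$ is likewise written via a measurable selection and the area formula as $\int_\Omega\sum_i\sqrt{\det(\mathrm{Id}+Df_i^TDf_i)}$, and the pointwise Taylor expansion of the Jacobian at $D=0$ yields the quadratic term $\tfrac12|Df|^2$ plus a remainder with exactly the stated structure. The only cosmetic difference is that in \cite{DS2} the expansion is obtained as a special case of the general Taylor expansion of $\mass(\bT_F)$ for push-forwards over a manifold, while you specialize to graphs from the start; your handling of the no-cancellation point and of the factorization $|\bar R_4(D)|=|D|^3\bar L(D)$ (via fourth-order vanishing and a cut-off, using $\Lip(f)\leq\bar c$) matches the intended argument.
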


\subsection{Approximation of area minimizing currents}\label{ss:approx}
Finally we recall some results on the approximation of area minimizing
currents.

To this aim we need to introduce more notation.
We consider cylinders in $\R^{m+n}$ of the form
$\bC_s(x) := \bar B_s(x) \times \R^n$ with $x \in \R^m$.

Since we are interested in interior regularity, we can assume for the purposes
of this section that we are always in the following setting:
for some open cylinder $\bC_{4r} (x)$ (with $r\leq 1$)
and some positive integer $Q$, the area minimizing current $T$
has compact support in $\bC_{4r}(x)$ and satisfies
\begin{equation}\label{e:(H)}
\p_\sharp T = Q\a{\bar B_{4r} (x)}\quad\mbox{and}\quad
\de T \res \bC_{4r} (x) =0,
\end{equation}
where $\p:\R^{m+n} \to \pi_0:=\R^m\times\{0\}$ is the orthogonal projection.

We introduce next the main regularity parameter for area minimizing currents, namely
the \textit{Excess}.

\begin{definition}[Excess measure]\label{d:excess}
For a current $T$ as above we define the \textit{cylindrical excess}
$\bE(T,\bC_{r} (x))$ as follows:
\begin{align*}
\bE(T,\bC_r (x))&:= \frac{\|T\| (\bC_r (x))}{\omega_m r^m} - Q\\
&= \frac{1}{2\,\omega_m r^m} \int_{\|T\| (\bC_r (x))}|\vec T-\vec \pi_0|^2\,\d\|T\|,
\end{align*}
where $\omega_m$ is the measure of the $m$-dimensional unit ball, and
$\vec \pi_0$ is the $m$-vector orienting $\pi_0$.
\end{definition}

\medskip 

The most general approximation result of area minimizing currents is the one due
to Almgren, and reproved in \cite{DS3} with more refined techniques, which asserts
that under suitable smallness condition of the excess, an area
minimizing current coincides on a big set with a graph of
a Lipschitz $Q$-valued function.

\begin{theorem}[Almgren's strong approximation]\label{t:approx}
There exist constants $C, \gamma_1,\eps_1>0$ (depending on $m,n,Q$)
with the following property. Assume that $T$ is area minimizing in
the cylinder $\bC_{4r} (x)$ and assume that
\[
E :=\bE(T,\bC_{4\,r} (x)) < \eps_1.
\]
Then, there exist a map $f: B_r (x) \to \Iqs$
and a closed set $K\subset \bar B_r (x)$ such that the following holds:
\begin{gather}\label{e:main(i)}
\Lip (f) \leq C E^{\gamma_1}, \\
\label{e:main(ii)}
\bG_f\res (K\times \R^n)=T\res (K\times\R^{n})\quad\mbox{and}\quad
|B_r (x)\setminus K| \leq
 C \, E^{1+\gamma_1} \, r^m,\\
\label{e:main(iii)}
\left| \|T\| (\bC_{r} (x)) - Q \,\omega_m\,r^m -
{\textstyle{\frac{1}{2}}} \int_{B_{r} (x)} |Df|^2\right| \leq
 C \, E^{1+\gamma_1} \, r^m.
\end{gather}
\end{theorem}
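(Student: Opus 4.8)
The plan is to prove Theorem~\ref{t:approx} via a Lipschitz selection on a large ``good set'' controlled by a maximal-function truncation of the excess, followed by a harmonic comparison to upgrade the measure bound and deduce the energy estimate. Concretely, let $d\mu := |\vec T - \vec\pi_0|^2 \, d\|T\|$ be (twice $\omega_m r^m$ times) the excess measure, and for a threshold $\lambda>0$ to be chosen as a small power of $E$, consider the set $K$ of points $x\in \bar B_r(x)$ where the density ratio $\mu(\bC_s(y))/(\omega_m s^m)$ stays below $\lambda$ for all $s\le 2r$. A Besicovitch/Vitali covering argument gives $|\bar B_r(x)\setminus K| \le C\lambda^{-1}\mu(\bC_{2r}(x)) \le C\lambda^{-1} E r^m$. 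On $K$ one wants to run the standard Lipschitz-approximation machinery for currents with small excess in the sense of Almgren/Federer: using the BV or slicing estimates for $T$ together with the smallness of the tilt at all scales down to $0$ at points of $K$, one shows that $\p|_{\supp T \cap (K\times\R^n)}$ has fibers of cardinality exactly $Q$ (counting multiplicity) and that the resulting multi-valued map $f\colon K \to \Iqs$ is Lipschitz with $\Lip(f)\le C\lambda^{1/2}\le C E^{\gamma_1}$. The multi-valued Lipschitz extension theorem from \cite{DS1} then extends $f$ to all of $B_r(x)$ with the same Lipschitz bound, and \eqref{e:main(i)}, together with the first part of \eqref{e:main(ii)} ($\bG_f \res (K\times\R^n) = T\res(K\times\R^n)$), follows.

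To get the sharper exponent $1+\gamma_1$ in the bad-set estimate \eqref{e:main(ii)} — which is the crucial quantitative improvement over the naive $\lambda^{-1}E$ bound — I would use the minimality of $T$ together with a comparison (competitor) argument. The idea, following \cite{DS3}, is: on the region $B_r(x)\setminus K$ where the current is not captured by $\bG_f$, replace $T$ by a competitor built from $\bG_f$ plus a ``cone-like'' interpolation near $\de$ of the bad region (using Theorem~\ref{t:commute} so that the boundary matches), estimate the mass of the competitor using Proposition~\ref{c:taylor_area}, and conclude from $\mass(T)\le \mass(\text{competitor})$ a differential/integral inequality for the quantity $\|T\|(\bC_s) - Q\omega_m s^m - \tfrac12\int|Df|^2$. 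Combined with the harmonic approximation (the rescaled $f$ is $W^{1,2}$-close to a $\D$-minimizer, hence has higher integrability from Theorem~\ref{t:Qvalued}(iii)), a standard iteration/absorption yields both \eqref{e:main(iii)} and the improved power $E^{1+\gamma_1}$ in \eqref{e:main(ii)}. The higher integrability $|Df|\in L^p$, $p>2$, of the comparison $\D$-minimizer is what converts the crude $\int_{B_r\setminus K}|Df|^2 \lesssim \lambda |B_r\setminus K| \lesssim E^{2\gamma_1+1}r^m$ estimate into the mass bound with the right exponent.

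The main obstacle is step two: establishing that $f$ is genuinely Lipschitz (not merely that a Lipschitz graph agrees with $T$ on $K$ away from multiplicity issues) and that the cardinality of the fibers of $\p$ over $K$ is exactly $Q$. This requires a careful local analysis: at a point $y\in K$, the smallness of the excess at all scales forces $T$ in $\bC_s(y)$ to be close, in the flat/varifold sense, to $Q$ copies of $\pi_0$; one then uses the constancy theorem and the fact that $\de T = 0$ in the cylinder to rule out ``vertical pieces'' and to show the sheets are graphical with controlled slope. Quantifying ``close'' into a pointwise Lipschitz bound for the multi-valued map — and doing so uniformly — is the technically heaviest part, and is exactly where the BV estimates for currents and the harmonic (tilt-excess decay) estimates must be combined. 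I would also need to be careful that the Lipschitz extension in \cite{DS1} preserves the trace, so that the competitor construction in step three is admissible; this is where Theorem~\ref{t:commute} is used to guarantee $\de \bG_f = \bG_{f|_{\de}}$ matches $\de T$ on the relevant slices.
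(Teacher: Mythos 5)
Note first that the paper does not actually prove Theorem~\ref{t:approx}: it is quoted from \cite{DS3}, so your proposal must be measured against that argument. Your first step does coincide with it: the weak Lipschitz approximation obtained by truncating the (non-centered) maximal function of the excess measure, with $\Lip (f)\leq C\lambda^{1/2}$, bad set of measure at most $C\lambda^{-1}E\,r^m$, fibers of cardinality $Q$ over the good set via slicing and $(\p)_\sharp T=Q\a{B_{4r}}$, and the multivalued Lipschitz extension of \cite{DS1}, is exactly the standard preliminary step.

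The genuine gap is in the passage from the linear bound $\lambda^{-1}E$ to the superlinear bound $E^{1+\gamma_1}$, which is the whole point of the theorem. You attribute the upgrade to the higher integrability of the \emph{single} comparison Dir-minimizer $w$ on $B_r$ (Theorem~\ref{t:Qvalued}(iii)) plus ``a standard iteration/absorption''. But the bad set is the superlevel set of the maximal density of the excess measure of $T$, and an $L^p$ bound for $|Dw|$ (or for $|Df|$) on $B_r$ gives no direct control of the measure of that set: the excess measure is not comparable to $|Df|^2\,dx$ precisely where graphicality fails, and your own bookkeeping shows the difficulty ($\lambda\,|B_r\setminus K|\leq C\lambda\cdot\lambda^{-1}E\,r^m= CE\,r^m$, with no gain). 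What the actual proof establishes is a higher-integrability (reverse-H\"older, ``gradient $L^p$'') estimate for the \emph{excess density of the current itself}, roughly $\int_{\{\bd\leq 1\}}\bd^{\,p}\leq C E^p r^m$ up to superlinear errors, and this is obtained by a multi-scale stopping-time (Calder\'on--Zygmund type) decomposition: on each dyadic cylinder where the truncation threshold is first exceeded one applies the harmonic approximation and the interior $L^p$ estimate for Dir-minimizers, and the errors are summable over the scales only because minimality yields an $o(E)$, in fact superlinear, error at every such cylinder. A one-shot comparison on $B_r$, as in your sketch, cannot produce the exponent $1+\gamma_1$; the Chebyshev argument with $\lambda=E^{2\gamma_1}$ works only after the $L^p$ bound on the excess density is in hand. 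Two further cautions: the harmonic approximation must be proved at the level of the weak ($E^{\beta}$) approximation by a compactness/contradiction argument using minimality --- in the paper's ordering Theorem~\ref{t:harmonic_final} is stated as a byproduct of Theorem~\ref{t:approx}, so quoting it as stated would be circular; and the competitor over the bad set indeed uses Proposition~\ref{c:taylor_area} and boundary matching as in Theorem~\ref{t:commute}, as you indicate, but only to close the final mass estimate \eqref{e:main(iii)}, not to replace the missing multi-scale estimate.
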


The most important improvement of the theorem above
with respect to the preexisting approximation results is the small
power $E^{\gamma_1}$ in the three estimates
\eqref{e:main(i)} - \eqref{e:main(iii)}.
Indeed, this will play a crucial role in the construction of the center manifold.
It is worthy mentioning that, when $Q=1$ and $n=1$,
this approximation theorem was first proved with different techniques
by De Giorgi in \cite{DG} (cp.~also \cite[Appendix]{DS-cm}).

As a byproduct of this approximation, we also obtain the analog of
the so called \textit{harmonic approximation}, which allows us
to compare the Lipschitz approximation above with a Dir-minimizing function.

\begin{theorem}[Harmonic approximation]\label{t:harmonic_final}
Let $\gamma_1,\eps_1$ be the constants of Theorem \ref{t:approx}.
Then, for every $\bar{\eta}>0$, there is a positive constant $\bar{\eps}_1 < \eps_1$
with the following property. 
Assume that $T$ is as in Theorem~\ref{t:approx} and
\[
E := \bE(T,\bC_{4\,r}(x)) < \bar{\eps}_1.
\]
If $f$ is the map in Theorem~\ref{t:approx},
then there exists a $\D$-minimizing function $w$
such that
\begin{multline}\label{e:harmonic_final}
r^{-2} \int_{B_r (x)} \cG (f, w)^2 + \int_{B_r(x)} \left(|Df|-|Dw|\right)^2 \\
+ \int_{B_r(x)} |D(\etaa\circ f)-
D(\etaa \circ w)|^2\leq \bar{\eta}\, E \, r^m,
\end{multline}
where $\etaa : \Iqs \to \R^n$ is the average map
\[
\etaa\left(\sum_i\a{P_i}\right) = \frac{1}{Q} \sum_i P_i.
\]
\end{theorem}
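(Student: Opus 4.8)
The plan is to prove the statement by a compactness (contradiction) argument, in the spirit of De Giorgi's classical harmonic approximation lemma but carried out in the $Q$-valued framework. Suppose it fails for some $\bar\eta>0$: then, after translating and rescaling, there are currents $T_k$ that are area minimizing in $\bC_4$ and satisfy \eqref{e:(H)} (with $r=1$), with $E_k:=\bE(T_k,\bC_4)\downarrow0$, together with the Lipschitz approximations $f_k:B_1\to\Iqs$ of Theorem~\ref{t:approx}, such that no $\D$-minimizing $w$ satisfies \eqref{e:harmonic_final} for $T_k$. By \eqref{e:main(iii)} and the standard estimate $\bE(T_k,\bC_2)\le CE_k$ one gets $\int_{B_1}|Df_k|^2\le CE_k$; after a vertical translation, together with the usual reduction to the regime where $T_k$ is close to a single $m$-plane with multiplicity $Q$ (De Giorgi's height bound), one also arranges $\int_{B_1}\cG(f_k,Q\a{0})^2\le CE_k$. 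I would then set $g_k:=E_k^{-1/2}f_k$, so that $\int_{B_1}\bigl(\cG(g_k,Q\a{0})^2+|Dg_k|^2\bigr)\le C$, and invoke the Rellich-type compactness theorem for $W^{1,2}$ $Q$-valued maps from \cite{DS1} to extract a subsequence with $g_k\to g$ strongly in $L^2(B_1,\Iqs)$ and weakly in $W^{1,2}$; by lower semicontinuity, $\int_{B_1}|Dg|^2\le\liminf_k\int_{B_1}|Dg_k|^2$.

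The crucial point is that $g$ is $\D$-minimizing, and here I would transfer the area-minimality of $T_k$ to the limit. Assume not: then, using density of Lipschitz $Q$-valued functions in $W^{1,2}$, there is a Lipschitz competitor $h$ with $h|_{\de B_1}=g|_{\de B_1}$ and $\int_{B_1}|Dh|^2\le\int_{B_1}|Dg|^2-\delta$ for some $\delta>0$. For $\rho<1$, I would glue $h$ to $g_k$ on a thin spherical shell near $\de B_1$ via the Luckhaus-type interpolation lemma for $Q$-valued functions, obtaining $h_k$ with $h_k=h$ on $B_\rho$, $h_k|_{\de B_1}=g_k|_{\de B_1}$, and interpolation energy $\to0$ as $k\to\infty$ and then $\rho\to1$ (the shell is chosen by a Fubini/mean-value argument controlling $\int_{\de B_t}(|Dg_k|^2+|Dh|^2)$, and one uses $g_k\to g$ in $L^2$ with matching traces). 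After undoing the rescaling, $v_k:=E_k^{1/2}h_k$ is Lipschitz with $\Lip(v_k)\to0$; picking a good radius $s\in(\rho,1)$ by Fubini so that the slices of $T_k$ and of $\bG_{f_k}$ over $\de B_s$ essentially coincide, I would form the competitor
\[
Z_k:=T_k\res(\bC_4\setminus\bC_s)+P_k+\bG_{v_k}\res\bC_s,
\]
where $P_k$ is a correction current of mass $o(E_k)$ repairing the small boundary discrepancy on $(\de B_s)\setminus K_k$. Theorem~\ref{t:commute} together with the choice of $s$ yields $\de Z_k=\de T_k$, so $Z_k$ is admissible. Expanding $\mass(\bG_{v_k}\res\bC_s)$ by Proposition~\ref{c:taylor_area} — whose cubic remainder is $o(E_k)$ because $\Lip(v_k)\to0$ — and using \eqref{e:main(iii)} for $\mass(T_k\res\bC_s)$, one obtains
\[
\mass(Z_k)-\mass(T_k)\le\tfrac12E_k\Bigl(\textstyle\int_{B_1}|Dh|^2-\int_{B_1}|Dg_k|^2\Bigr)+o(E_k)+E_k\,\beta(\rho),
\]
where $\beta(\rho)\to0$ as $\rho\to1$. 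Letting $k\to\infty$ and using $\liminf_k\int_{B_1}|Dg_k|^2\ge\int_{B_1}|Dg|^2$, the right-hand side is $\le\tfrac12E_k\bigl(-\delta+\beta(\rho)+o(1)\bigr)$, which is $<0$ for $\rho$ close to $1$ and $k$ large — contradicting the minimality of $T_k$. Hence $g$ is $\D$-minimizing.

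It remains to upgrade the convergence. Running the same competitor construction with $h$ replaced by a Lipschitz approximation of $g$ itself forces $\limsup_k\int_{B_1}|Dg_k|^2\le\int_{B_1}|Dg|^2$, hence $\int_{B_1}|Dg_k|^2\to\int_{B_1}|Dg|^2$. Combining this with the strong $L^2$-convergence $g_k\to g$, the weak $W^{1,2}$-convergence, and the orthogonal splitting of the Dirichlet energy into the contributions of the average $\etaa\circ g_k$ and of the trace-free part — to each of which lower semicontinuity applies separately, so that neither can lose mass — one obtains $D(\etaa\circ g_k)\to D(\etaa\circ g)$ strongly in $L^2$ and, more generally,
\[
\textstyle\int_{B_1}\cG(g_k,g)^2+\int_{B_1}\bigl(|Dg_k|-|Dg|\bigr)^2+\int_{B_1}\bigl|D(\etaa\circ g_k)-D(\etaa\circ g)\bigr|^2\ \longrightarrow\ 0.
\]
Taking $w:=E_k^{1/2}g$ (which is $\D$-minimizing since $g$ is), the left-hand side of \eqref{e:harmonic_final} for $T_k$ equals $E_k$ times the above expression (recall $r=1$), hence is $o(E_k)$, so it is $<\bar\eta E_k$ for $k$ large — contradicting the choice of the sequence. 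This proves the theorem, $\bar\eps_1$ being the threshold below which the whole scheme applies.

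The main obstacle is the construction of the competitor $Z_k$: the $Q$-valued interpolation across the shell must be carried out with vanishing energy cost (this is precisely the Luckhaus-type lemma of \cite{DS1}); the mismatch between $T_k$ and its graphical approximation on the bad set $B_1\setminus K_k$ must be repaired by a current of mass $o(E_k)$ while preserving the boundary, which is where Theorem~\ref{t:commute} enters; and the mass of the graph piece must be expanded with an $o(E_k)$ error — exactly the point at which the small power $\gamma_1$ of Theorem~\ref{t:approx}, guaranteeing $\Lip(f_k)\to0$ and $|B_1\setminus K_k|=o(E_k)$, becomes indispensable. A secondary difficulty is the final passage from weak to strong $W^{1,2}$-convergence in the three-term sense of \eqref{e:harmonic_final}, which relies on the compactness and semicontinuity theory for $Q$-valued Sobolev functions.
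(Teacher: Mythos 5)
Your overall strategy is the right one and is essentially the scheme of the source this survey cites for the statement (the notes themselves give no proof, referring to \cite{DS3}): argue by contradiction and compactness, transfer the area-minimality of $T_k$ to Dir-minimality of the limit by building competitor currents out of Lipschitz $Q$-valued competitors (Taylor expansion of the mass as in Proposition~\ref{c:taylor_area}, interpolation on a thin shell, a good slicing radius, isoperimetric patching of the graph/current mismatch with mass $o(E_k)$), and then upgrade weak to strong convergence; this is also exactly the mechanism the survey deploys later in the final blowup argument. Your error bookkeeping ($\Lip(f_k)\leq C E_k^{\gamma_1}$, $|B_1\setminus K_k|\leq C E_k^{1+\gamma_1}$, fillings of mass $o(E_k)$) is correct, and the role you assign to the small power $\gamma_1$ is the right one.

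There is, however, a genuine gap at the compactness step. The claimed bound $\int_{B_1}\cG(f_k,Q\a{0})^2\leq C E_k$ ``after a vertical translation'' is unjustified and in general false: the height bound only gives $|f_k|\leq C E_k^{1/(2m)}$, hence $\int_{B_1}\cG(f_k,Q\a{0})^2\leq C E_k^{1/m}$, and no translation by a single vector can do better, because the sheets of $T_k$ may sit at mutual distance much larger than $E_k^{1/2}$ while the excess is arbitrarily small (think of a current close to two parallel planes: the separation is of order one, the excess as small as you wish). Consequently $g_k=E_k^{-1/2}f_k$ need not be bounded in $L^2$, the Rellich-type extraction of the limit $g$ fails, and with it the strong $L^2$ convergence that is supposed to produce the first term of \eqref{e:harmonic_final}. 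Note that the $Q$-valued Poincar\'e inequality does not rescue the argument as stated: it provides a mean which is a $Q$-point, not a vector, and ``subtracting'' a $Q$-point is not an operation on $Q$-valued maps. Handling sheets that drift apart at scales between $E_k^{1/2}$ and $E_k^{1/(2m)}$ therefore requires an additional idea — e.g.\ a concentration/splitting of the sheets into clusters (possibly with an induction on $Q$), or a comparison with the Dir-minimizer having the same boundary trace on a good sphere, whose $L^2$ distance from $f_k$ is then controlled through the energy defect rather than through compactness of $g_k$ alone. This $Q$-valued subtlety is the main missing ingredient in your sketch; the remaining steps (competitor construction, energy convergence, the pointwise splitting of $|Df|^2$ into the average and the deviation to get strong convergence of $D(\etaa\circ g_k)$) are sound once the compactness is secured, though the semicontinuity facts you invoke for the modulus of the gradient should be quoted from \cite{DS1} rather than taken for granted.
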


%
%
\section{Selection of contradiction's sequence}
\addtocontents{toc}{\protect\setcounter{tocdepth}{1}}

In this section we give the details of the first step (A) in \S~\ref{ss:sketch},
namely the selection of a common subsequence such that the rescaled currents converge
to a flat tangent cone and the measure of the singular set remains uniformly
bounded below away from zero.
For this purpose, we introduce the following notation.
We denote by $\B_r(x)$ the open ball of radius $r>0$ in $\R^{m+n}$
(we do not write the point $x$ if the origin)
and, for $Q\in \N$, we denote by $\rD_Q(T)$
the points of density $Q$ of the current $T$,
and set
\begin{gather*}
\Reg_Q (T) := \Reg (T)\cap \rD_Q (T) \quad\text{and}\quad
\Sing_Q (T) := \Sing (T)\cap \rD_Q (T).
\end{gather*}

The precise properties of the sequence that will be used in the blowup
argument are stated in the following proposition.
We recall that the main hypothesis at the base of the proof is the
contradiction assumption of \S~\ref{ss:sketch}, which we restate for
reader's convenience.

\medskip

\textbf{Contradiction assumption:} there exist numbers
$m\geq 2$, $n \in \N$, $\alpha>0$
and an area minimizing $m$-dimensional integer rectifiable current
$T$ in $\R^{m+n}$ such that
\[
\cH^{m-2+\alpha}(\Sing(T)) > 0.
\]

\medskip

We introduce the \textit{spherical excess} defined as follows: for a
given $m$-dimensional plane $\pi$,
\begin{gather*}
\bE(T,\B_r (x),\pi) := \frac{1}{2\,\omega_m\,r^m}\int_{\B_r (x)} |\vec T - \vec \pi|^2 \, d\|T\|,\\
\bE(T,\B_r (x)):=\min_\tau \bE (T, \B_r (x), \tau) .
\end{gather*}

\begin{proposition}[Contradiction's sequence]\label{p:seq}
Under the contradiction assumption,
there exist
\begin{enumerate}
\item constants $m,n, Q \geq 2$ natural numbers and $\alpha,\eta>0$
real numbers;

\item an $m$-dimensional
area minimizing integer rectifiable
current $T$ in $\R^{m+n}$ with $\de T = 0$;

\item a sequence $r_k\downarrow 0$
\end{enumerate}
such that $0\in \rD_Q (T)$ and the following holds:
\begin{gather}
\lim_{k\to+\infty}\bE(T_{0,r_k}, \B_{10}) = 0,\label{e:seq1}\\
\lim_{k\to+\infty} \cH^{m-2+\alpha}_\infty (\rD_Q (T_{0,r_k}) \cap \B_1) > \eta,\label{e:seq2}\\
\cH^m \big((\B_1\cap \supp (T_{0, r_k}))\setminus \rD_Q (T_{0,r_k})\big) > 0\quad \forall\; k\in\N.\label{e:seq3}
\end{gather}
\end{proposition}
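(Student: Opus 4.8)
\textbf{The plan} is to extract the point $x_0$ and the sequence of radii by combining a density/differentiation argument at scale of the singular set with the monotonicity formula for tangent cones. The starting data is an area minimizing $T$ in $\R^{m+n}$ with $\cH^{m-2+\alpha}(\Sing(T))>0$; I may and will replace $T$ by $T\res \B_R$ for a suitable large ball, so that $\de T = 0$ in the relevant region (using the monotonicity formula to ensure nothing is lost), and decompose $\Sing(T)$ according to density: since by the monotonicity formula the density $\Theta(T,x)$ is an upper semicontinuous integer-or-half-integer valued (more precisely, $\geq 1$) function on $\supp(T)\setminus\supp(\de T)$, and since $\Sing(T) = \bigcup_{Q} \Sing_Q(T)$ is a \emph{countable} union, at least one stratum $\Sing_Q(T)$ must satisfy $\cH^{m-2+\alpha}(\Sing_Q(T))>0$; fix such a $Q$. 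Because $m=1$ is excluded (an area minimizing $1$-current is locally a finite union of disjoint segments, hence has empty singular set) we have $m\geq 2$, and since $\Sing_Q(T)\subset\rD_Q(T)\setminus\Reg(T)$ is nonempty we must have $Q\geq 2$ (a density-one point is regular by Allard's theorem); this gives item (1).

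\textbf{First}, I would invoke the standard density-lower-bound lemma for Hausdorff measures (e.g.\ \cite[2.10.19]{Fed} or the elementary covering argument): if $E\subset\R^{m+n}$ is a Borel set with $\cH^{s}(E)>0$ (here $s=m-2+\alpha$), then for $\cH^s$-a.e.\ $x\in E$ one has $\limsup_{r\to 0} r^{-s}\,\cH^s_\infty(E\cap \B_r(x)) \geq c(s)>0$, where $\cH^s_\infty$ is the Hausdorff pre-measure. Applying this to $E=\Sing_Q(T)$ produces a point $x_0\in\Sing_Q(T)$ and a sequence $\rho_j\downarrow 0$ with $\rho_j^{-s}\,\cH^s_\infty(\Sing_Q(T)\cap \B_{\rho_j}(x_0)) \geq c(s)$. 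Rescaling by $\iota_{x_0,\rho_j}$ and using the scaling behaviour of $\cH^s_\infty$ together with the fact that $\Sing_Q$ and $\rD_Q$ are natural under dilations ($\rD_Q(T_{x_0,\rho})=\iota_{x_0,\rho}(\rD_Q(T))$), this translates into $\cH^s_\infty(\rD_Q(T_{x_0,\rho_j})\cap \B_1) \geq \eta_0 := c(s)>0$ (using $\Sing_Q\subset\rD_Q$); after translating so that $x_0 = 0$ this is a weak form of \eqref{e:seq2} along $\rho_j$, and \eqref{e:seq3} holds along $\rho_j$ as well, since $x_0$ being singular forces, in \emph{every} neighbourhood, the presence of points of $\supp(T)$ of density $\neq Q$ (otherwise, by the constancy of the density together with Allard's regularity theorem applied after a blowup, $x_0$ would be regular — this is the content of the upper semicontinuity of $\Theta$ combined with $\Sing_Q$ being relatively closed in $\rD_Q$, and it is exactly the statement that $x_0\notin\Reg_Q(T)$ cannot be witnessed only by a full-measure set of density-$Q$ points nearby).

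\textbf{Second}, I would arrange \eqref{e:seq1}, the convergence to a \emph{flat} tangent cone, by a further selection. By the monotonicity formula, along any sequence of radii a subsequence of $T_{0,r}$ converges weakly to an area minimizing cone $S$ with $\de S=0$ and $\Theta(S,0)=\Theta(T,0)=Q$. The key extra input is a theorem (essentially Almgren's stratification, or more elementarily White's / Simon's results on the structure of the singular set; in this context one uses that the set of points where \emph{no} tangent cone is flat — the top stratum $\mathcal{S}^{m-2}$ in Almgren's stratification — has Hausdorff dimension $\leq m-2$) ensuring that the set of $x$ at which \emph{every} tangent cone is non-flat has $\cH^{m-2}$-measure zero, a fortiori $\cH^{m-2+\alpha}$-measure zero. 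Hence, discarding this null set from $\Sing_Q(T)$ before running the density-point argument, I may assume the chosen $x_0$ admits \emph{at least one} flat tangent cone, i.e.\ there is a sequence $r_k\downarrow 0$ with $T_{0,r_k}\weak Q\a{\pi_0}$ for some $m$-plane $\pi_0$, which is precisely $\bE(T_{0,r_k},\B_{10})\to 0$ after a rotation. The delicate point — and the main obstacle — is that the radii $r_k$ delivering a flat blowup and the radii $\rho_j$ delivering the density lower bound \emph{need not be the same sequence}, as the statement itself warns. I would resolve this by noting that the density lower bound is a $\limsup$ statement that, by a Vitali/Besicovitch covering argument, can be upgraded: the set of $x\in\Sing_Q(T)$ where the density lower bound \eqref{e:seq2}-type inequality holds along \emph{every} sequence $r\to 0$ (i.e.\ with $\liminf$ in place of $\limsup$, possibly with a smaller constant $\eta<\eta_0$) still carries positive $\cH^s$ measure — this is a standard consequence of the fact that $\cH^s_\infty$-densities are uniformly bounded below on a set of positive measure after removing a portion of small measure; alternatively one uses that $\cH^s\res\Sing_Q(T)$ has positive lower $s$-density $\cH^s$-a.e., which is sharper and classical. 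Intersecting this ``good'' set of full-density points with the ``good'' set admitting a flat tangent cone — both of positive $\cH^s$ measure inside a set of positive $\cH^s$ measure, so their intersection is nonempty (in fact of positive measure) — yields a single point $x_0$ for which there is a sequence $r_k\downarrow 0$ with $\bE(T_{0,r_k},\B_{10})\to 0$ \emph{and}, since along $r_k$ the density bound holds with the uniform constant $\eta$, $\liminf_k \cH^s_\infty(\rD_Q(T_{0,r_k})\cap\B_1) \geq \eta > 0$; combined with \eqref{e:seq3}, which persists along any sequence since $x_0$ remains singular at every scale, this establishes \eqref{e:seq1}, \eqref{e:seq2}, \eqref{e:seq3} and completes the proof. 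The hardest part of the argument is exactly this reconciliation of the two selections, and it relies essentially on Almgren's dimension bound $\dimh\mathcal{S}^{m-2}\leq m-2 < m-2+\alpha$ to know that ``flat tangent cone exists'' holds $\cH^s$-a.e.\ on $\Sing_Q(T)$.
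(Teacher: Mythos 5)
Your overall scaffolding (stratify by density, pick a density point of $\Sing_Q$, rescale to get the premeasure bound) matches the paper's first step, but the way you reconcile the two selections is built on a false measure-theoretic claim, and this is precisely the crux of the proposition. You assert that, after removing a small portion, the $\liminf$ version of the density bound holds — i.e.\ that $\cH^{s}\res\Sing_Q(T)$ (or the premeasure $\cH^s_\infty$) has positive \emph{lower} $s$-density on a set of positive measure, ``possibly with a smaller constant''. For $s=m-2+\alpha$ non-integer and a completely arbitrary Borel set (which is all $\Sing_Q(T)$ is at this stage), this is not classical and is in fact false: the classical density theorems only control the \emph{upper} density ($\limsup$), and there are compact sets with $0<\cH^s<\infty$ whose lower density vanishes at every point (cluster-type Cantor constructions, where at a sparse sequence of scales the set sits inside a tiny sub-ball, so $\cH^s_\infty(E\cap \B_r(x))\ll r^s$ along that sequence while the Frostman condition keeps $\cH^s(E)>0$). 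Positive lower density is essentially an Ahlfors-regularity property that nothing in the hypotheses provides. So your ``upgrade from $\limsup$ to $\liminf$'' step fails, and with it the claim that the flat-blowup radii and the density radii can be taken equal at a common point. (Two secondary issues: the pointwise claim that $\Theta$ is integer-or-half-integer valued, used to justify the countable decomposition of $\Sing(T)$, is unjustified — one needs Theorem~\ref{t:strat}(2), which only gives integrality $\cH^{m-3+\alpha}$-a.e.; and your justification of \eqref{e:seq3} only produces the \emph{presence} of non-$Q$-density points near $x_0$, not a set of positive $\cH^m$ measure.)

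The paper resolves the mismatch of sequences quite differently, and you should compare: it works with the single sequence $r_k$ coming from the density point, passes the premeasure bound to the blowup limit using the upper semicontinuity of $\cH^{m-2+\alpha}_\infty$ under Hausdorff convergence of supports (this is exactly why the unrestricted premeasure is used), and then splits into cases. If the tangent cone $S$ along that sequence is a flat plane of multiplicity $Q$, one is done with $T$ the original current. If not, one shows via Lemma~\ref{l:regular cone} that $\Sing_Q(S)$ itself has positive $\cH^{m-2+\alpha}_\infty$ measure, picks a density point $x\neq 0$ of $\Sing_Q(S)$, and blows up $S$ there: the second cone $S_1$ splits off a line, so by the inductive choice of $m$ as the \emph{smallest} dimension where Theorem~\ref{t:main} fails its singular stratum is $\cH^{m-2+\alpha}$-null, and Lemma~\ref{l:regular cone} then forces $S_1$ to be a flat plane with multiplicity $Q$; the proposition then holds with $T$ a \emph{translated tangent cone} $S$ (note it is in general not the original current, and this also gives $\de T=0$ globally, which your restriction $T\res\B_R$ does not). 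Your proposal uses neither the minimality of $m$/dimension reduction nor Lemma~\ref{l:regular cone}, and without them (or a genuinely new idea replacing the false lower-density claim) the argument does not close.
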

Here $\cH^{m-2+\alpha}_\infty$ is the Hausdorff premeasure computed without any
restriction on the diameter of the sets in the coverings. 

By Almgren's stratification theorem and by general measure theoretic arguments,
there exist sequences satisfying either \eqref{e:seq1} or \eqref{e:seq2}.
The two subsequences might, however, be different: we show
the existence of one point and a single subsequence along which
\textit{both} conclusions hold.
The proof of the proposition is based on the following two results.

\begin{theorem}[Almgren {\cite[2.27]{Alm}}]\label{t:strat}
Let $\alpha>0$ and let $T$ be
an integer rectifiable area minimizing current in $\R^{m+n}$.
Then,
\begin{itemize}
\item[(1)] for $\cH^{m-2+\alpha}$-a.e.~point
$x \in \supp(T) \setminus \supp(\de T)$
there exists a subsequence $s_k\downarrow 0$ such that
$T_{x,s_k}$ converges to a flat cone;
\item[(2)] for $\cH^{m-3+\alpha}$-a.e.~point
$x \in \supp(T) \setminus \supp(\de T)$, it holds that $\Theta(T,x) \in \Z$.
\end{itemize}
\end{theorem}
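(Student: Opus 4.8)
The plan is to run the classical Federer dimension-reduction scheme, based on the monotonicity formula. First I would record the standard consequences of monotonicity: for every $x\in\supp(T)\setminus\supp(\de T)$ the density $\Theta(T,x)$ exists and is $\ge 1$, the map $x\mapsto\Theta(T,x)$ is upper semicontinuous, and every blowup limit $C=\lim_k T_{x,r_k}$ is a locally area minimizing cone with $\de C=0$, $0\in\supp(C)$ and $\Theta(C,0)=\Theta(T,x)$. For such a cone let $S(C):=\{v:(\tau_v)_\sharp C=C\}$ be its spine, a linear subspace of dimension at most $m$, and define the strata
\[
\mathcal{S}_j:=\big\{x\in\supp(T)\setminus\supp(\de T)\ :\ \dim S(C)\le j\ \text{for every tangent cone }C\text{ at }x\big\},
\]
so that $\mathcal{S}_0\subseteq\cdots\subseteq\mathcal{S}_{m-1}\subseteq\mathcal{S}_m=\supp(T)\setminus\supp(\de T)$. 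Two structural facts about area minimizing cones will be used repeatedly. First, a cone with $\dim S(C)=m$ is flat, i.e.\ equals $Q\a{S(C)}$ with $Q\in\N$: its support must lie in the $m$-plane $S(C)$ (otherwise, being invariant under that plane and under dilations, it would be $(m{+}1)$-dimensional, against the density lower bound), and then the constancy theorem applies. Second, no area minimizing cone has $\dim S(C)=m-1$: in that case $C=C_0\times S(C)$ with $C_0$ a $1$-dimensional area minimizing cone in $S(C)^\perp$ with $\de C_0=0$, necessarily a single line through the origin with integer multiplicity (a junction or two distinct lines could be strictly shortened by a Steiner-type competitor with the same boundary, e.g.\ replacing four radial segments by the two segments $[-\eps v_2,\eps v_1]$ and $[-\eps v_1,\eps v_2]$), so $C_0$, and hence $C$, is flat and $\dim S(C)=m$.

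The heart of the proof is the estimate $\dimh\mathcal{S}_j\le j$, which I would establish by the usual iterated blowup. If $\cH^s(\mathcal{S}_j)>0$ for some $s>j$, then, at a point of $\mathcal{S}_j$ chosen so that the $\cH^s_\infty$-premeasure of $\mathcal{S}_j$ does not collapse under rescaling, one blows up, and by the Federer--Fleming compactness theorem together with the fact that area minimality passes to blowup limits one obtains a locally area minimizing cone $C$ whose stratum $\mathcal{S}_j$ still carries positive $\cH^s$ premeasure. Using that every $y\in S(C)$ has tangent cone $C$ itself, while a blowup at a point off $S(C)$ strictly enlarges the translation-invariant subspace, after finitely many such iterations one reaches a cone whose spine has dimension exceeding the index being tracked, for which the corresponding stratum is empty --- contradicting the surviving positive premeasure. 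The bookkeeping in this iteration (upper semicontinuity of the density along two nested blowup sequences, and persistence of the strata in the limit) is the step I expect to be the main obstacle.

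Granting $\dimh\mathcal{S}_j\le j$, part (1) is the case $j=m-2$. For an area minimizing cone the conditions $\dim S(C)\le m-1$ and $\dim S(C)\le m-2$ are equivalent --- by the two structural facts both just say $C$ is not flat --- so $\mathcal{S}_{m-1}=\mathcal{S}_{m-2}$, and $\dimh\mathcal{S}_{m-2}\le m-2$ yields $\cH^{m-2+\alpha}(\mathcal{S}_{m-2})=0$. Hence for $x$ outside $\supp(\de T)\cup\mathcal{S}_{m-2}$ there is a tangent cone $C$ with $\dim S(C)\ge m-1$, forcing $\dim S(C)=m$, so $C$ is flat; this proves (1), and in passing gives $\Theta(T,x)=Q\in\Z$ at such $x$.

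Finally, part (2) is the case $j=m-3$: $\mathcal{S}_{m-3}$ has Hausdorff dimension $\le m-3$, hence is $\cH^{m-3+\alpha}$-null. For $x\notin\supp(\de T)\cup\mathcal{S}_{m-3}$ there is a tangent cone $C$ with $\dim S(C)\ge m-2$. If $\dim S(C)\in\{m-1,m\}$ then $C$ is flat and $\Theta(T,x)=Q\in\Z$. If $\dim S(C)=m-2$ then $C=C_0\times S(C)$ with $C_0$ a $2$-dimensional area minimizing cone with isolated singularity, so $\Theta(T,x)=\Theta(C_0,0)$; that this is an integer --- equivalently, that $C_0$ is flat with great-circle cross-section of integer multiplicity --- is the one genuinely two-dimensional ingredient, which I would take from \cite{Alm}. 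In all cases $\Theta(T,x)\in\Z$, proving (2).
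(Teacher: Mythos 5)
The paper does not prove this theorem at all: it is quoted verbatim from Almgren \cite[2.27]{Alm} and used as a black box in the proof of Proposition~\ref{p:seq}, so there is no internal argument to compare yours against. Your route is the standard one (Almgren--White stratification via Federer dimension reduction), and its skeleton is correct: the spine $S(C)$ of a minimizing cone is a linear subspace, a spine of dimension $m$ forces flatness by the constancy theorem, a spine of dimension exactly $m-1$ is impossible because one-dimensional minimizing cones without boundary are single lines with multiplicity, hence $\mathcal{S}_{m-1}=\mathcal{S}_{m-2}$ and part (1) follows once $\dimh\mathcal{S}_j\le j$ is known; part (2) follows from the case $j=m-3$ plus integrality of the vertex density of two-dimensional minimizing cones. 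You correctly identify the dimension-reduction iteration (upper semicontinuity of density and persistence of the strata along a diagonal blowup, the fact that blowing up a cone at a point off its spine strictly increases the spine) as the technical heart; as written it is only a sketch, but it is the same machinery Almgren and White carry out, so deferring it is reasonable for a statement the paper itself only cites.

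One assertion in your treatment of part (2) is genuinely wrong, though not fatally so. When $\dim S(C)=m-2$ you write that integrality of $\Theta(C_0,0)$ is ``equivalently, that $C_0$ is flat with great-circle cross-section of integer multiplicity.'' It cannot be: if $C_0$ were flat, i.e.\ $C_0=Q\a{\pi}$ for a $2$-plane $\pi$, then $S(C)$ would be $m$-dimensional, contradicting the very hypothesis $\dim S(C)=m-2$ of the case you are in; taken literally, your parenthetical would make that case vacuous. The correct two-dimensional input is weaker and is the one you should import: a two-dimensional area minimizing cone with an isolated singularity at the origin is a finite sum of $2$-planes with positive integer multiplicities meeting only at $0$ (its link is a union of great circles with multiplicities), so its vertex density is the sum of those multiplicities, hence an integer, even though the cone is not flat --- the holomorphic cone $\{zw=0\}\subset\C^2$ is the model example. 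With the parenthetical replaced by this statement (available in \cite{Alm}, and also a consequence of the two-dimensional theory behind \cite{Wh83}), your argument for (2) is the standard and correct one.
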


\begin{lemma}\label{l:regular cone}
Let $S$ be an $m$-dimensional area minimizing integral current, which is a cone
in $\R^{m+n}$ with $\partial S=0$,
$Q= \Theta (S, 0) \in \mathbb N \setminus \{0\}$, and assume that
\begin{gather*}
\cH^m\big(D_Q(S)) >0 \quad\text{and}\quad
\cH^{m-1}(\Sing_Q(S))=0.
\end{gather*}
Then $S$ is an $m$-dimensional plane with multiplicity $Q$.
\end{lemma}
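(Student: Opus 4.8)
The plan is to use the cone structure of $S$ together with the monotonicity formula to show that the density‑$Q$ set $D_Q(S)$ is automatically a linear subspace, necessarily of dimension at least $m$, and then to conclude by a dimension count. First I would record that, since $S$ is an area minimizing cone with $\partial S=0$, the monotonicity formula holds at every point of $\R^{m+n}$, and the ratio $r\mapsto \|S\|(\B_r)/(\omega_m r^m)$ is identically equal to $Q=\Theta(S,0)$. For $x\in\supp(S)$, enclosing $\B_r(x)$ between the concentric balls $\B_{r-|x|}$ and $\B_{r+|x|}$ shows that $\|S\|(\B_r(x))/(\omega_m r^m)\to Q$ as $r\to+\infty$; being monotone nondecreasing in $r$ with limit $\Theta(S,x)$ as $r\to 0$, this ratio satisfies $\Theta(S,x)\le Q$, with equality exactly when it is constant in $r$. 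By the equality case of the monotonicity formula, such constancy forces $S$ to be a cone with vertex $x$ as well, and scaling of the cone gives $\Theta(S,\lambda x)=\Theta(S,x)$ for every $\lambda>0$.

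Next I would deduce that $V:=D_Q(S)$ is a linear subspace along which $S$ is translation invariant. If $x\in D_Q(S)$, the previous step makes $S$ simultaneously a cone with vertex $0$ and a cone with vertex $x$; composing the two families of dilations (the map $y\mapsto\lambda y$ followed by $y\mapsto(y-x)/\lambda$ is the translation $y\mapsto y-x/\lambda$) shows that $S$ is invariant under translations by $\R x$. Since the translation symmetries of $S$ form a group and, by the above, contain $\R x$ for every $x\in D_Q(S)$, the set $V=D_Q(S)$ is a linear subspace, and $(\tau_v)_\sharp S=S$ for all $v\in V$, where $\tau_v$ denotes translation by $v$. (Alternatively one may first use $\cH^{m-1}(\Sing_Q(S))=0$, which forces $\cH^m(\Reg_Q(S))>0$ and hence the existence of a \emph{regular} density‑$Q$ point $x_0\ne 0$, and run the same rigidity argument there; I expect that variant to be closest to the paper's framework.)

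Finally I would close by a dimension count. By the standard splitting of an integral current invariant under a subspace of translations, $S=S'\times\a{V}$ for an integer rectifiable current $S'$ of dimension $m-\dim V$ in $V^\perp$; since $S\ne 0$ (because $\Theta(S,0)=Q\ge 1$) this already gives $\dim V\le m$, while $\cH^m(V)=\cH^m(D_Q(S))>0$ gives $\dim V\ge m$. Hence $\dim V=m$, and $S'$ is $0$‑dimensional; moreover $S'$ is itself a cone with vertex $0$ in $V^\perp$ (because $S$ is), so its support is $\{0\}$ and $S'=c\,\a{0}$ with $c\in\Z$, whence $S=c\,\a{V}$ and $\Theta(S,0)=|c|=Q$ shows that $S$ is the $m$‑plane $V$ taken with multiplicity $Q$. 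The main obstacle is just to execute cleanly the two rigidity inputs invoked above — the equality case of the monotonicity formula (constant density ratio implies conical) and the product splitting of a translation‑invariant integral current — both classical for area minimizing, hence stationary, integral currents; once these are in place no further estimate is needed, only careful bookkeeping around the cone hypothesis.
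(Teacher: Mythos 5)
Your proof is correct, but it follows a genuinely different route from the paper. You identify $\rD_Q(S)$ with the \emph{spine} of the cone: the sandwich argument in the monotonicity formula gives $\Theta(S,x)\le\Theta(S,0)=Q$ for every $x$, equality forces $S$ to be a cone with vertex $x$ as well, hence (as in the classical splitting lemma, \cite[Lemma~35.5]{Sim}, iterated) $\rD_Q(S)$ is a linear subspace along which $S$ is translation invariant; positivity of its $\cH^m$ measure then forces this subspace to be $m$-dimensional and the product decomposition yields $S=Q\a{V}$. The paper instead takes the union $U$ of balls around regular $Q$-points, uses the same density bound $\Theta\le Q$ plus upper semicontinuity to show $\supp(S)\cap\partial U\subset\Sing_Q(S)$, invokes the hypothesis $\cH^{m-1}(\Sing_Q(S))=0$ together with Lemma~\ref{l:separate} to conclude $\partial(S\res U)=0$, and then applies Allard's regularity theorem to identify $S\res U=S$ with a multiplicity-$Q$ plane. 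Two remarks on the comparison. First, your argument never uses the hypothesis $\cH^{m-1}(\Sing_Q(S))=0$ (outside your parenthetical variant), so it actually proves a stronger statement; this is consistent, since the maximal-density set of a stationary cone is always a subspace, but you should make explicit that the rigidity inputs you invoke (equality case of monotonicity at the level of \emph{currents}, and the product splitting $S=S'\times\a{V}$, including the degenerate case $\dim V=m$ with $S'$ zero-dimensional) are exactly the content of the splitting lemma the paper cites in the proof of Proposition~\ref{p:seq}, iterated $m$ times. Second, what each approach buys: yours avoids Allard's theorem and Lemma~\ref{l:separate} altogether and is closer to the standard dimension-reduction machinery, while the paper's version is self-contained modulo Allard once Lemma~\ref{l:separate} is in place and uses the $\Sing_Q$ hypothesis in the precise form in which it is later verified. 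Your bound $\dim V\le m$ is stated a bit loosely (``since $S\neq0$''); the cleanest justification is that $V\subset\supp(S)$ and every support point has density at least $1$, so a covering argument bounds $\dim V$ by $m$ — a one-line fix, not a gap.
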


\begin{proof}[Proof of Proposition~\ref{p:seq}] Let $m>1$
be the smallest integer for which Theorem \ref{t:main} fails.
In view of Almgren's stratification Theorem~\ref{t:strat},
we can assume that there exist
an integer rectifiable area minimizing current $R$ of dimension $m$
and a positive integer $Q$
such that the Hausdorff dimension of $\Sing_Q (R)$ is larger than $m-2$.
We fix the smallest $Q$ for which such a current $R$ exists and note that
by Allard's regularity theorem (cp.~\cite{All}) it must be $Q>1$.

Let $\alpha>0$ be such that $\cH^{m-2+\alpha}(\Sing_Q(R))>0$, and consider a density
point $x_0$ for the measure $\cH^{m-2+\alpha}$
(without loss of generality $x_0=0$). In particular,
there exists $r_k\downarrow 0$ such that
\[
\lim_{k\to+\infty} \frac{\cH^{m-2+\alpha}_\infty \big(\Sing_Q (R)\cap \B_{r_k}\big)}{r_k^{m-2+\alpha}} > 0\, .
\]
Up to a subsequence (not relabeled) we can assume that $R_{0,r_k}\to S$, with
$S$ a tangent cone. 
If $S$ is a multiplicity $Q$ flat plane, then we set $T:=R$ and 
the proposition is proven (indeed, \eqref{e:seq3} is satisfied because $0\in\Sing (R)$ and $\|R\|\geq \cH^m \res \supp (R)$).

If $S$ is {\em not} flat, taking into account the convergence properties of area
minimizing currents \cite[Theorem~34.5]{Sim} and the upper semicontinuity of
$\cH_\infty^{m-2+\alpha}$ under the Hausdorff convergence of compact sets,
we deduce
\begin{equation}\label{e:upper_density}
\cH^{m-2+\alpha}_\infty \big(\rD_Q (S) \cap \bar{\B}_1\big) \geq
\liminf_{k\to+\infty} \cH^{m-2+\alpha}_\infty \big(\rD_Q (R_{0, r_k})\cap \bar{\B}_1\big) > 0.
\end{equation}
We claim that \eqref{e:upper_density} implies
\begin{equation}\label{e:persistence sing}
\cH^{m-2+\alpha}_\infty (\Sing_Q (S)) > 0.
\end{equation}
Indeed, if all points of $\rD_Q (S)$ are singular, then \eqref{e:persistence sing}
follows from
\eqref{e:upper_density} directly.
Otherwise, $\Reg_Q(S)$ is not empty, thus implying $\cH^m (D_Q (S) \cap \B_1)>0$:
we can then apply Lemma~\ref{l:regular cone} and infer that,
since $S$ is not regular, then
$\cH^{m-1} (\Sing_Q (S))>0$ and \eqref{e:persistence sing} holds.

We can, hence, find $x\in \Sing_Q (S)\setminus \{0\}$ and $r_k\downarrow 0$ such that
\[
\lim_{k\to+\infty} \frac{\cH^{m-2+\alpha}_\infty \big(\Sing_Q (S)\cap \B_{r_k}(x))}{r_k^{m-2+\alpha}} > 0.
\]
Up to a subsequence (not relabelled), we can assume that $S_{x, r_k}$ converges to $S_1$.
Since $S_1$ is a tangent cone to the cone $S$ at $x\neq 0$, $S_1$ splits off a line,
i.e.~$S_1 = S_2\times \a{\{ t\, e : t \in \R\} }$ for some $e \in \mathbb{S}^{m+n-1}$,
for some area minimizing cone $S_2$ in $\R^{m-1+n}$
and some $v\in \R^{m+n}$ (cp.~\cite[Lemma 35.5]{Sim}).
Since $m$ is, by assumption, the smallest integer for which Theorem \ref{t:main} fails, $\cH^{m-3+\alpha} (\Sing (S_2)) = 0$ and, hence, $\cH^{m-2+\alpha} (\Sing_Q (S_1))= 0$.
On the other hand, arguing as for \eqref{e:upper_density}, we have
\[
\cH^{m-2+\alpha}_\infty (\rD_Q (S_1)\cap \bar\B_1)\geq
\limsup_{k\to+\infty} \cH^{m-2+\alpha}_\infty (\rD_Q (S_{x, r_k})\cap \bar\B_1)  > 0.
\]
Thus $\Reg_Q (S_1)\neq \emptyset$ and, hence, $\cH^m (D_Q (S_1))> 0$.
We then can apply Lemma~\ref{l:regular cone} again and conclude that $S_1$ is an $m$-dimensional plane with multiplicity $Q$. 
Therefore, the proposition follows taking $T$ a suitable translation of $S$.
\end{proof}

\subsection*{Proof of Lemma~\ref{l:regular cone}}

We premise the following lemma.

\begin{lemma}\label{l:separate}
Let $T$ be an integer rectifiable current of dimension $m$
in $\R^{m+n}$ with locally finite mass and $U$ an open set such that
\[
\cH^{m-1}(\partial U \cap \supp (T)) = 0
\quad\text{and}\quad (\partial T) \res U = 0.
\]
Then $\partial (T\res U)=0$.
\end{lemma}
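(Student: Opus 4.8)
The plan is to prove Lemma~\ref{l:separate} directly from the definitions of boundary and support, using the locality of the boundary operator together with the hypothesis that the boundary of $T$ does not charge the ``seam'' $\partial U \cap \supp(T)$, which moreover has vanishing $\cH^{m-1}$-measure.

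\medskip

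\textbf{The approach.} Recall that $\partial(T\res U)(\omega) = (T\res U)(d\omega) = T(\mathbf{1}_U\, d\omega)$ for every $\omega\in\sD^{m-1}$. If $\omega$ is supported in $U$, then $\mathbf{1}_U\,d\omega = d\omega$ near $\supp(\omega)$ and $(\partial(T\res U))(\omega) = T(d\omega) = (\partial T)(\omega) = 0$ by the hypothesis $(\partial T)\res U = 0$. Similarly, if $\supp(\omega)\cap\bar U=\emptyset$, the pairing vanishes trivially. The content of the lemma is therefore entirely about test forms whose support meets $\partial U$. The idea is to show that $\partial(T\res U)$, as a current, is supported in $\partial U \cap \supp(T)$, and then to upgrade this to the conclusion that it vanishes, by exploiting the fact that a current of dimension $m-1$ with locally finite mass (which $\partial(T\res U)$ is, once we know $T\res U$ has finite mass and — by the constancy/flatness considerations — bounded boundary mass) cannot be concentrated on a set of vanishing $\cH^{m-1}$-measure unless it is zero.

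\medskip

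\textbf{Key steps, in order.} First I would verify that $T\res U$ has locally finite mass (immediate, since $\|T\res U\|\le\|T\|$) and that $\partial(T\res U)$ has locally finite mass as well: for this one localises near a point $x\in\partial U$ and uses a comparison/slicing argument — more precisely, for a.e.\ radius $\rho$ the slice $\langle T, \dist(\cdot,x),\rho\rangle$ is an integer rectifiable current and $\partial(T\res U)\res \B_\rho(x)$ differs from such a slice by a piece of $(\partial T)\res U = 0$; a Fubini/coarea argument then bounds $\mass(\partial(T\res U)\res \B_\rho(x))$ by $\frac{d}{d\rho}\|T\|(\B_\rho(x))$ for a.e.\ $\rho$, which is finite. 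Second, I would show $\supp(\partial(T\res U)) \subset \partial U \cap \supp(T)$: away from $\bar U$ the current $T\res U$ vanishes, so its boundary does too; in the interior of $U$ we showed above that $\partial(T\res U)$ agrees with $(\partial T)\res U=0$; and of course $\supp(\partial(T\res U))\subset\supp(T\res U)\subset\supp(T)$. Third, invoke the standard fact (a consequence of e.g.\ the lower density bounds for currents, or directly of the definition of mass as a measure) that an $(m-1)$-current with locally finite mass whose support is contained in a set $E$ with $\cH^{m-1}(E)=0$ must be the zero current, since $\|\partial(T\res U)\| \le C\,\cH^{m-1}\res E = 0$. Combining, $\partial(T\res U)=0$.

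\medskip

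\textbf{Main obstacle.} The delicate point is the second step in the list above — establishing that $\partial(T\res U)$ has \emph{locally finite mass}, i.e.\ that it is a genuine $(m-1)$-dimensional current of finite mass rather than an object of lower regularity. A priori $T\res U$ need not have finite boundary mass just because $T$ does; the hypothesis $\cH^{m-1}(\partial U\cap\supp(T))=0$ is precisely what rescues this, via the coarea-type slicing estimate: integrating the slice mass inequality over a family of sets exhausting $U$ and using that the ``excess boundary'' created by cutting lives on $\partial U\cap\supp(T)$, which is $\cH^{m-1}$-null. Once finite mass of $\partial(T\res U)$ is in hand, the support containment and the concentration-on-a-null-set argument are routine. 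I would present the slicing estimate carefully and then dispatch the rest in a line or two.
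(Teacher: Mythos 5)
Your support-containment step and the overall shape (localise the problem to $\partial U\cap\supp(T)$, then kill a boundary concentrated on an $\cH^{m-1}$-null set) are in the spirit of the paper, but there are two genuine gaps in the way you close the argument. First, the ``standard fact'' you invoke at the end is false as stated: an $(m-1)$-dimensional current with locally finite mass supported in an $\cH^{m-1}$-null set need not vanish. For instance $\omega\mapsto\langle\omega(x_0),e_1\wedge\cdots\wedge e_{m-1}\rangle$ is an $(m-1)$-current of mass $1$ supported at a single point; there is no inequality $\|S\|\leq C\,\cH^{m-1}\res\supp(S)$ for general finite-mass currents, since lower density bounds require additional structure. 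What makes the conclusion true is that the current is a \emph{flat chain} (or rectifiable), and this is exactly the content of the paper's proof: one shows $T\res U$ is locally a flat chain by exhibiting it as a mass-limit of the normal currents $T\res(V\cap U\cap\{{\rm dist}(\cdot,\partial U)>r\})$ (Federer 4.1.17), hence $\partial(T\res U)$ is a flat chain (4.1.12), and then the support theorem for flat chains (4.1.20) applies. Your proposal omits this structural ingredient entirely.

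Second, the route you propose to the missing ingredient --- proving that $\partial(T\res U)$ has locally finite mass --- does not go through. Slicing by ${\rm dist}(\cdot,x)$ controls $\partial(T\res \B_\rho(x))$ via $\frac{d}{d\rho}\|T\|(\B_\rho(x))$, but this has no bearing on $\partial(T\res U)$, which is created by cutting along $\partial U$, not along spheres. The relevant slicing is by ${\rm dist}(\cdot,\partial U)$, and there the coarea inequality only gives that the slice masses are finite for a.e.\ $r$ and integrable in $r$; they need not stay bounded along any sequence $r_k\downarrow 0$, so one cannot pass to the limit in mass for the boundaries and deduce $\mass(\partial(T\res U))<\infty$ locally. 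The hypothesis $\cH^{m-1}(\partial U\cap\supp(T))=0$ provides no mass bound by itself --- in the paper it enters only at the last step, through the support theorem. Indeed the paper never establishes finite boundary mass before concluding; it deliberately sidesteps that issue, which is why the flat-chain machinery is used. If you could prove local finiteness of $\mass(\partial(T\res U))$ your argument would be repairable (then $T\res U$ is locally normal, hence flat, and 4.1.20 finishes), but as proposed that step is not justified, so the proof has a genuine gap.
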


\begin{proof} Consider $V\subset \subset \R^{m+n}$.
By the slicing theory
\[
S_r:= T\res (V \cap U \cap \{{\rm dist}\, (x, \partial U) >r\})
\]
is a normal current in ${\bf N}_m (V)$ for a.e.~$r$.
Since
\[
\mass (T\res(V\cap U)-S_r)\to 0 \quad\text{as}\quad r\downarrow 0,
\]
we conclude that
$T\res (U\cap V)$
is in the $\mass$-closure of ${\bf N}_m (V)$. Thus, by \cite[4.1.17]{Fed},
$T\res U$ is a flat chain in $\R^{m+n}$ and by \cite[4.1.12]{Fed}
$\partial (T\res U)$ is a flat chain.
Since $\supp (\partial (T\res U)) \subset \partial U\cap \supp (T)$, we can
apply \cite[Theorem 4.1.20]{Fed} to conclude that $\partial (T\res U)=0$.
\end{proof}

We next prove Lemma~\ref{l:regular cone}.
For each $x\in \Reg_Q (S)$, let  $r_x$ be such that $S\res \B_{2r_x} (x) = Q \a{\Gamma}$
for some regular submanifold $\Gamma$ and set
\[
U := \bigcup_{x \in \Reg_Q(S)} \B_{r_x} (x).
\]
Obviously, $\Reg_Q (S)\subset U$; hence, by assumption, it is not empty.
Fix $x\in \supp (S)\cap \partial U$. 
Let next $(x_k)_{k \in \N}\subset \Reg_Q (S)$ be such that
\[
{\rm dist}\, (x, \B_{r_{x_k}}(x_k)) \to 0.
\]
We necessarily have that $r_{x_k}\to 0$: otherwise we would have
$x\in \B_{2r_{x_k}} (x_k)$ for some $k$, which would imply $x\in \Reg_Q (S)\subset U$, i.e. a contradiction.
Therefore, $x_k\to x$ and,
by \cite[Theorem~35.1]{Sim},
\[
Q=\limsup_{k\to+\infty}\Theta(S,x_k)  \leq \Theta (S, x) = \lim_{\lambda \downarrow 0} \Theta (S, \lambda x) \leq \Theta(S,0) = Q.
\]
This implies $x\in \rD_Q (S)$.
Since $x\in \partial U$, we must then have $x\in \Sing_Q (S)$.
Thus, we conclude that
$\cH^{m-1} (\supp (S)\cap \de U) = 0$.
It follows from Lemma~\ref{l:separate} that
$S':=S\res U$ has $0$ boundary in $\R^{m+n}$.
Moreover, since $S$ is an area minimizing cone, $S'$ is also an area-minimizing cone. 
By definition of $U$ we have $\Theta (S', x) = Q$ for $\|S'\|$-a.e.~$x$ and, by semicontinuity,
\[
Q\leq\Theta (S', 0)\leq \Theta (S, 0) =Q.
\]
We apply Allard's theorem \cite{All} and deduce that $S'$ is regular,
i.e.~$S'$ is an $m$-plane with multiplicity $Q$. Finally, from $\Theta (S', 0) = \Theta (S, 0)$, we infer
$S'=S$. \qed

%
%
\section{Center manifold's construction}
\addtocontents{toc}{\protect\setcounter{tocdepth}{2}}

In this section we describe the procedure for the construction of the center manifold.
As mentioned in the introduction, this is the most complicated part of the proof:
indeed, the construction of the center manifold comes together with a series of
other estimates which will enter significantly in the proof of
the main Theorem~\ref{t:main}.
In particular, as an outcome of the procedure we obtain
the following several things.

\begin{itemize}
\item[(1)] A decomposition of the horizontal plane $\pi_0 = \R^m \times \{0\}$
of ``Whitney's type''.

\item[(2)] A family of interpolating functions defined on the cubes of this decomposition.

\item[(3)] A normal approximation taking values in the normal bundle
of the center manifold.

\item[(4)] A set of criteria (which will in fact determine the Whitney decomposition)
which lead to what we call \textit{splitting-before-tilting} estimates.

\item[(5)] An family of intervals, called \textit{intervals of flattening}, where
the construction will be effective.

\item[(6)] A family of pairs cube--ball transforming the estimates on
the Whitney decomposition into estimates on balls (thus 
passing from the cubic lattice of the decomposition to the standard geometry of balls). 
\end{itemize}

\subsection{Notation and assumptions}
Let us recall the following notation.
Given an integer rectifiable current $T$ with compact support,
we consider the \textit{spherical}
and the \textit{cylindrical} excesses defined as follows, respectively:
for given $m$-planes $\pi, \pi'$, we set
\begin{align}
\bE(T,\B_r (x),\pi) &:= \left(2\omega_m\,r^m\right)^{-1}\int_{\B_r (x)} |\vec T - \vec \pi|^2 \, d\|T\|,\\
\qquad \bE (T, \bC_r (x, \pi), \pi') &:= \left(2\omega_m\,r^m\right)^{-1} \int_{\bC_r (x, \pi)} |\vec T - \vec \pi'|^2 \, d\|T\|\, ,
\end{align}
where $\bC_r (x, \pi) = \bar B_r(x,\pi) \times \pi^\perp$ is the cylinder over
the closed ball $\bar B_r(x,\pi)$ or radius $r$ and center $x$ in the $m$-dimensional plane $\pi$.
And we consider the {\it height function} in a set $A$ (we denote by
$\p_\pi$ the orthogonal projection on a plane $\pi$)
\[
\bh(T,A,\pi) := \sup_{x,y\,\in\,\supp(T)\,\cap\, A} |\p_{\pi^\perp}(x)-\p_{\pi^\perp}(y)|\, .
\]
We also set
\begin{equation}\label{e:optimal_pi}
\bE(T,\B_r (x)):=\min_\tau \bE (T, \B_r (x), \tau) = \bE(T,\B_r (x),\pi),
\end{equation} 
and we will use $\bE (T, \bC_r (x, \pi))$ in place of
$\bE(T, \bC_r (x, \pi), \pi)$: note that it coincides with
the cylindrical excess as defined in \S~\ref{ss:approx}
when
\[
(\p_\pi)_\sharp T \res \bC_r (x, \pi)=
Q \a{\bar B_r (\p_\pi (x), \pi)}.  
\]

\medskip

In this section we will work with an area minimizing integer rectifiable current
$T^0$ with compact support 
which satisfies the following assumptions: for some constant $\eps_2\in (0,1)$,
which we always suppose to be small enough,
\begin{gather}
\Theta (0, T^0) = Q\quad \mbox{and}\quad \partial T^0 \res \B_{6\sqrt{m}} = 0,\label{e:basic}\\
\quad \|T^0\| (\B_{6\sqrt{m} \rho}) \leq \big(\omega_m Q (6\sqrt{m})^m + \eps_2^2\big)\,\rho^m
\quad \forall \rho\leq 1,\label{e:basic2}\\
E:=\bE\left(T^0,\B_{6\sqrt{m}}\right)=\bE\left(T^0,\B_{6\sqrt{m}},\pi_0\right)
\leq \eps_2^2,\label{e:pi0_ottimale}
\end{gather}

It follows from standard considerations in geometric measure theory that
there are positive constants $C_0 (m,n,Q)$ and $c_0 (m,n, Q)$ with the following property.
If $T^0$ is as in \eqref{e:basic} - \eqref{e:pi0_ottimale},
$\eps_2 < c_0$ and $T:= T^0 \res \B_{23\sqrt{m}/4}$, then:
\begin{gather}
\partial T \res \bC_{11\sqrt{m}/2} (0, \pi_0)= 0,\label{e:semplice 0}\\
(\p_{\pi_0})_\sharp T\res \bC_{11 \sqrt{m}/2} (0, \pi_0) = Q \a{B_{11\sqrt{m}/2} (0, \pi_0)},\label{e:geo semplice 1}\\
\bh (T, \bC_{5\sqrt{m}} (0, \pi_0)) \leq C_0 \eps_2^{\frac{1}{m}}\, .\label{e:pre_height}
\end{gather}
In particular for each $x\in B_{11\sqrt{m}/2} (0, \pi_0)$ there is  a point
$p\in \supp (T)$ with $\p_{\pi_0} (p)=x$.

\subsection{Whitney decomposition and interpolating functions}
The construction of the center manifold is done
by following a suitable decomposition of the horizontal plane $\pi_0$ into cubes.
We denote by $\sC^j$, $j\in \N$, the family of dyadic
closed cubes $L$ of $\pi_0$ with side-length $2^{1-j}=:2\,\ell (L)$.
Next we set $\sC := \bigcup_{j\in \N} \sC^j$. 
If $H$ and $L$ are two cubes in $\sC$ with $H\subset L$, then we call $L$ an {\em ancestor} of $H$ and $H$ a {\em descendant} of $L$. When in addition $\ell (L) = 2\ell (H)$, $H$ is {\em a son} of $L$ and $L$ {\em the father} of $H$.

\begin{definition}\label{e:whitney} A Whitney decomposition of $[-4,4]^m\subset \pi_0$ consists of a closed set $\bGam\subset [-4,4]^m$ and a family $\mathscr{W}\subset \sC$ satisfying the following properties:
\begin{itemize}
\item[(w1)] $\bGam \cup \bigcup_{L\in \mathscr{W}} L = [-4,4]^m$ and $\bGam$ does not intersect any element of $\mathscr{W}$;
\item[(w2)] the interiors of any pair of distinct cubes $L_1, L_2\in \mathscr{W}$ are disjoint;
\item[(w3)] if $L_1, L_2\in \mathscr{W}$ have nonempty intersection, then
\[
\frac{1}{2}\ell (L_1) \leq \ell (L_2) \leq 2\, \ell (L_1).
\]
\end{itemize}
\end{definition}

Observe that (w1) - (w3) imply
\[
{\rm dist}\, (\bGam, L) := \inf \big\{ |x-y|: x\in L, y\in \bGam \big\} \geq 2\ell (L)
\quad \text{for every $L\in \mathscr{W}$.}
\]
However, we do {\em not} require any inequality of the form 
${\rm dist}\, (\bGam, L) \leq C \ell (L)$, although this would be customary for what is commonly 
called Whitney decomposition in the literature.

\medskip

We denote by $\sS^j$ all the dyadic cubes with side-length $2^{1-j}$
which are not contained in $\sW$ and set $\sS:= \cup_{j\geq N_0} \sS^j$
for some big natural number $N_0$.
For each cube $L \in \sW \cup \sS$, we set $r_L= M_0 \sqrt{m}\ell(L)$,
with $M_0 \in \N$ a dimensional constant to be fixed later, and we call its
center $x_L$.
We can then find points $p_L\in \supp(T)$,
with coordinates $p_L=(x_L, y_L) \in \pi_0 \times \pi_0^\perp$,
and \textit{interpolating functions}
\[
g_L : B_{4r_L}(p_L, \pi_0) \to \pi_0^\perp,
\]
such that the following holds: for every $H, L \in \sW\cup \sS$,
\begin{gather}
\|g_H\|_{C^0}\leq C\, E^{\frac{1}{2m}} \quad\text{and}\quad
\|Dg_H\|_{C^{2, \kappa}} \leq C E^{\frac{1}{2}};\label{e:g1}\\
\|g_H-g_L\|_{C^i (B_{r_L} (p_L,\pi_0))} \leq C E^{\frac{1}{2}} \ell (H)^{3+\kappa-i}\label{e:g2}\\
\text{ $\forall\;i\in \{0, \ldots, 3\}$ if $H\cap L\neq \emptyset$};\notag\\
|D^3 g_H (x_H) - D^3 g_L (x_L)| \leq C E^{\frac{1}{2}} |x_H-x_L|^\kappa;\label{e:g3}\\
\sup_{(x, y) \in\supp(T),\;x \in H}\|g_H-y\|_{C^0} \leq C E^{\frac{1}{2m}} \ell (H),\label{e:g4}
\end{gather}
for some $\kappa>0$, and where we used the notation
\[
B_r(p_L, \pi_0) := \B_r(p_L) \cap (p_L + \pi_0).
\]

\medskip

It is now very simple to show how to patch all the interpolating functions
$g_L$ in order to construct a center manifold.
To this aim, we set
\[
\sP^j := \sS^j \cup \left\{L \in \sW\;:\;\,\ell(L)\geq 2^{-j}\right\}.
\]
For every $L\in \sP^j$ we define 
\[
\vartheta_L (y):= \vartheta \left(\frac{y-x_L}{\ell (L)}\right),
\]
for some fixed $\vartheta\in C^\infty_c \big([-\frac{17}{16}, \frac{17}{16}]^m, [0,1]\big)$ that is identically $1$ on $[-1,1]^m$.
We can then patch all the interpolating functions using the partition
of the unit induced by the $\vartheta_L$, i.e.
\begin{equation}\label{e:ph_j}
\varphi_j := \frac{\sum_{L\in \sP^j} \vartheta_L g_L}{\sum_{L\in \sP^j} \vartheta_L}.
\end{equation}

The following theorem is now a very easy consequence of the estimates
on the interpolating functions.

\begin{theorem}[Existence of the center manifold]\label{t:cm}
Assume to be given a Whitney decomposition $(\Gamma, \sW)$
and interpolating functions $g_H$ as above.
If $\eps_2$ is sufficiently small, then
\begin{itemize}
\item[(i)] the functions $\varphi_j$ defined in \eqref{e:ph_j} satisfy
\[
\|D\varphi_j\|_{C^{2, \kappa}} \leq C E^{\frac{1}{2}}
\quad\text{and}\quad\|\varphi_j\|_{C^0}
\leq C E^{\frac{1}{2m}},
\]

\item[(ii)] $\varphi_j$ converges to a map $\phii$ such that
$\cM:= \gr (\phii|_{]-4,4[^m})$ is a $C^{3,\kappa}$ submanifold of $\Sigma$,
called in the sequel \textup{center manifold},

\item[(iii)] for all $x \in \bGam$, the point $(x, \phii(x)) \in \supp(T)$
and is a multiplicity $Q$ point.
Setting $\Phii(y) := (y,\phii(y))$, we call $\Phii (\bGam)$ the {\em contact set}.
\end{itemize}
\end{theorem}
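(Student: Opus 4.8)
The plan is to establish the three conclusions in order; the substance lies in the uniform $C^{3,\kappa}$ bound (i), after which (ii) and (iii) follow by soft arguments. For (i), I would fix $j$ and a point $y\in\,]-4,4[^m$, choose $L\in\sP^j$ with $y\in L$ (so $\vartheta_L(y)=1$ and hence $\sum_{H\in\sP^j}\vartheta_H\geq 1$ in a neighbourhood of $y$), and note that only the boundedly many $H\in\sP^j$ meeting a fixed neighbourhood of $L$ enter the sums in \eqref{e:ph_j}, all of them with $\tfrac12\ell(L)\leq\ell(H)\leq 2\ell(L)$ by (w2)--(w3). Writing
\[
\varphi_j=g_L+\frac{\sum_H\vartheta_H\,(g_H-g_L)}{\sum_H\vartheta_H},
\]
the first term is controlled by \eqref{e:g1}. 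For the correction one differentiates the products $\vartheta_H(g_H-g_L)$: a derivative of $\vartheta_H$ of order $k$ costs a factor $C\ell(L)^{-k}$ (by the scaling $\vartheta_H(\cdot)=\vartheta((\cdot-x_H)/\ell(H))$ and $\ell(H)\sim\ell(L)$), while \eqref{e:g2} gives $\|g_H-g_L\|_{C^i}\leq CE^{1/2}\ell(L)^{3+\kappa-i}$ on the relevant ball; hence every derivative of order $i\le 3$ of the correction is bounded by $CE^{1/2}\ell(L)^{3+\kappa-i}\leq CE^{1/2}$, and the $\kappa$-Hölder bound for $D^3\varphi_j$ is obtained by feeding this together with the comparison \eqref{e:g3} into a Campanato-type chaining along adjacent cubes. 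The estimate $\|\varphi_j\|_{C^0}\leq CE^{1/(2m)}$ is then immediate from $\|g_L\|_{C^0}\leq CE^{1/(2m)}$ and the bound just proved on the correction; smallness of $\eps_2$ enters only to ensure, say, $\|D\varphi_j\|_{C^0}<1$.

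For (ii): the bounds in (i) are uniform in $j$, so Arzel\`a--Ascoli gives $C^3_{\loc}$ subsequential convergence. To obtain convergence of the whole sequence I would bound $\varphi_{j+1}-\varphi_j$: passing from $\sP^j$ to $\sP^{j+1}$ changes, near any point, only the cubes of side $\sim 2^{-j}$, so locally $\varphi_{j+1}-\varphi_j$ is a convex combination of differences $g_H-g_L$ with $\ell(H),\ell(L)\sim 2^{-j}$, whence $\|\varphi_{j+1}-\varphi_j\|_{C^3}\leq CE^{1/2}2^{-j\kappa}$ by \eqref{e:g2}. Thus $(\varphi_j)$ is Cauchy in $C^3_{\loc}(\,]-4,4[^m)$, the limit $\phii$ inherits $\|\phii\|_{C^0}\leq CE^{1/(2m)}$ and $\|D\phii\|_{C^{2,\kappa}}\leq CE^{1/2}$, and since $\|D\phii\|_{C^0}$ is small, $\cM=\gr(\phii|_{]-4,4[^m})$ is an embedded $C^{3,\kappa}$ graph (in a curved ambient one postcomposes with a local parametrisation so that $\cM\subset\Sigma$; in the Euclidean case there is nothing to do).

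For (iii): given $x\in\bGam$, any dyadic cube through $x$ of generation $j\geq N_0$ is not contained in any cube of $\sW$ (otherwise $x$ would be covered), so there are $L\in\sS$ with $x\in L$ and $\ell(L)\to 0$. For such $L$ one has $\vartheta_L(x)=1$, hence $|\varphi_j(x)-g_L(x)|\leq CE^{1/2}\ell(L)^{3+\kappa}$ by \eqref{e:g2}; choosing $(x,y)\in\supp(T)$ (which exists since the projection is surjective, see \eqref{e:geo semplice 1}) and using \eqref{e:g4} gives $|\varphi_j(x)-y|\leq CE^{1/(2m)}\ell(L)\to 0$. Hence $(x,\varphi_j(x))\to(x,\phii(x))$ has zero distance to the closed set $\supp(T)$, i.e.\ $(x,\phii(x))\in\supp(T)$. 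That $(x,\phii(x))$ is moreover a density-$Q$ point is part of the same construction: over the (good) cubes $L\ni x$ collapsing to $x$ the $Q$ sheets of the Lipschitz approximation of Theorem~\ref{t:approx} stay $CE^{1/(2m)}\ell(L)$-close to $\gr(g_L)$, so they pass through any fixed neighbourhood of $(x,\phii(x))$, and combining this with the monotonicity formula and the mass bound \eqref{e:basic2} pins $\Theta(T,(x,\phii(x)))$ to $Q$. Finally, setting $\Phii(y):=(y,\phii(y))$, the set $\Phii(\bGam)$ is by construction the contact set.

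The genuinely delicate step is the Hölder bound in (i): because the Whitney decomposition here is not required to satisfy $\dist(\bGam,L)\leq C\ell(L)$, the comparison estimates \eqref{e:g2}--\eqref{e:g3} cannot be localised at a single scale but have to be chained along sequences of neighbouring cubes in order to control $[D^3\varphi_j]_\kappa$ uniformly in $j$.
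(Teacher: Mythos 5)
Your local estimates and the soft parts coincide with the paper: on each cube $H$ you write $\varphi_j=g_H+\sum_{L\in\sP^j(H)}(g_L-g_H)\chi_L$, bound the correction via the Leibniz rule, \eqref{e:g1}, \eqref{e:g2} and the scaling of the cutoffs, get the $C^0$ bound, and your arguments for (ii) and (iii) from \eqref{e:g2} and \eqref{e:g4} are fine (the paper in fact leaves those details to the reader). The gap is exactly at the step you yourself single out as delicate: the uniform bound on $[D^3\varphi_j]_{\kappa}$ when $x\in H$, $y\in L$ lie in non-adjacent cubes. You propose to get it by ``Campanato-type chaining along adjacent cubes'', but chaining does not close. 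Along a chain of neighbouring cubes $M_0=H,\dots,M_N=L$ the increments of $D^3$ are of size $CE^{1/2}\ell(M_i)^{\kappa}$ (this is all \eqref{e:g2} gives for adjacent cubes), and precisely because the decomposition is \emph{not} required to satisfy $\dist(\bGam,M)\leq C\ell(M)$, a chain joining $H$ to $L$ may be forced through a region where the cubes of $\sP^j$ have size $2^{-j}\ll |x-y|$; then $N\sim |x-y|\,2^{j}$ and $\sum_i\ell(M_i)^{\kappa}\sim |x-y|\,2^{j(1-\kappa)}$, which is not bounded by $|x-y|^{\kappa}$ uniformly in $j$. So the chained estimate degenerates as $j\to\infty$ and cannot produce a $C^{3,\kappa}$ limit; the classical Whitney-extension detour through points of $\bGam$ is also unavailable, for the very reason you note.

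The mechanism in the paper is different and uses \eqref{e:g3} not along a chain but as a \emph{direct two-point comparison between the centers of arbitrary, possibly far apart, cubes}: by construction $\varphi_j\equiv g_H$ in a neighbourhood of the center $x_H$ (near $x_H$ only $\vartheta_H$ survives), so $D^3\varphi_j(x_H)=D^3g_H(x_H)$ and likewise at $x_L$; moreover for disjoint cubes with $\ell(H)\leq\ell(L)$ one has $\max\{|x-x_H|,|y-x_L|\}\leq\ell(L)\leq|x-y|$, hence also $|x_H-x_L|\leq 3|x-y|$. Then
\[
|D^3\varphi_j(x)-D^3\varphi_j(y)|\;\leq\;|D^3\varphi_j(x)-D^3\varphi_j(x_H)|+|D^3g_H(x_H)-D^3g_L(x_L)|+|D^3\varphi_j(x_L)-D^3\varphi_j(y)|\;\leq\;CE^{\frac12}\,|x-y|^{\kappa},
\]
using the one-cube seminorm bounds on $H$ and $L$ for the outer terms and \eqref{e:g3} for the middle one. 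The structural point your proposal misses is that \eqref{e:g3} is stated for \emph{every} pair $H,L\in\sW\cup\sS$ — it is inherited from the fact that all the $g$'s approximate the same current — and it is this global center-to-center estimate, not any chaining, that yields $[D^3\varphi_j]_{\kappa}\leq CE^{1/2}$ uniformly in $j$. As written, your proof of (i) therefore has a genuine gap; once (i) is fixed this way, your Cauchy estimate for $\varphi_{j+1}-\varphi_j$ and your treatment of (iii) go through.
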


\begin{proof}
Define $\chi_H := \vartheta_H/ (\sum_{L\in \sP^j} \vartheta_L)$ 
and observe that 
\begin{align}
\sum \chi_H = 1 \quad \text{and}\quad
\|\chi_H\|_{C^i} &\leq C_0 (i, m, n) \,\ell (H)^{-i} \qquad \forall i\in \N\label{e:p_unita}\, .
\end{align}
Set $\sP^j (H):=\{L\in \sP^j : L\cap H\neq\emptyset\}\setminus \{H\}$. 
By construction
\[
\frac{1}{2} \ell (L) \leq \ell (H) \leq 2\, \ell (L)
\quad\text{for every $L\in \sP^j (H)$,}
\]
and the cardinality of $\sP^j(H)$ is bounded by a geometric constant $C_0$. 
The estimate $|\varphi_j| \leq C E^{\frac{1}{2m}}$ follows then easily 
from \eqref{e:g1}.
 
For $x\in H$ we write 
\begin{align}
\varphi_j (x) &=\Big(g_H \chi_H  + \sum_{L\in \sP^j (H)} 
g_L \chi_L\Big) (x) = g_H (x) + \sum_{L\in \sP^j (H)} (g_L - g_H) \chi_L\,  (x)\, .
\end{align}
Using the Leibniz rule, \eqref{e:p_unita}, \eqref{e:g1} and \eqref{e:g2},
for $i\in \{1,2,3\}$ we get
\begin{align*}
\|D^i \varphi_j\|_{C^0 (H)} &\leq \|g_H\|_{C^i} + \sum_{0\leq l \leq i} 
\sum_{L\in \sP^j (H)} \|g_L-g_H\|_{C^l (H)} \ell (L)^{l-i}\\
& \leq C E^{\frac{1}{2}} \big(1+\ell (H)^{3+\kappa-i}\big).
\end{align*}
Next, using also $[D^3 g_H - D^3 g_L]_\kappa \leq C E^{\frac{1}{2}}$, we obtain 
\begin{multline*}
[D^3  \varphi_j]_{\kappa, H} \leq \sum_{0\leq l \leq 3} \sum_{L\in \sP^j (H)} \ell (H)^{l-3} \big(
\ell (H)^{-\kappa} \|D^l (g_L-g_H)\|_{C^0 (H)} \\
\qquad+ [D^l (g_L - g_H)]_{\kappa, H}\big) + [D^3 g_H]_{\kappa, H}  \leq C E^{\frac{1}{2}}.
\end{multline*}
Fix now $x, y\in [-4,4]^m$, let $H, L\in \sP^j$ be such that $x\in H$ and $y\in L$.
If $H\cap L\neq \emptyset$, then
\begin{equation}\label{e:primo caso}
|D^3 \varphi_j (x) - D^3 \varphi_j (y)| \leq C \big([D^3 \varphi_j]_{\kappa, H}
+ [D^3 \varphi_j]_{\kappa, L}\big) |x-y|^{\kappa}.
\end{equation}
If $H\cap L= \emptyset$, we assume without loss of generality
 $\ell (H) \leq \ell (L)$ and observe that 
\[
\max \big\{|x-x_H|, |y-x_L|\big\} \leq \ell(L) \leq |x-y|\, .
\] 
Moreover, by construction ${\varphi}_j$ is identically equal to $g_H$ in a neighborhood of its center $x_H$. Thus, we can estimate 
\begin{align*}
|D^3 \varphi_j (x) - D^3 \varphi_j (y)| & \leq 
|D^3 \varphi_j (x) - D^3 \varphi_j (x_H)|  
+ |D^3 g_H (x_H) - D^3 g_L (x_L)|\\
& + |D^3 \varphi_j (x_L) - D^3 \varphi_j (y)|\nonumber\\
\leq \;&C E^{\frac{1}{2}} \left(|x-x_H|^\kappa + |x_H-x_L|^\kappa+ |y-x_L|^\kappa \right)\\
& \leq C E^{\frac{1}{2}} |x-y|^\kappa\, ,
\end{align*}
where we used \eqref{e:primo caso} and \eqref{e:g3}.
The convergence of the sequence $\varphi_j$ (up to subsequences) and (iii)
are now simple consequences of \eqref{e:g4} (details are left to the reader).
\end{proof}

\subsection{Normal approximation}
The main feature of the center manifold $\cM$ lies actually
in the fact that it allows to make a good approximation
of the current which turns out to be almost centered by $\cM$.

We introduce the following definition.

\begin{definition}[$\cM$-normal approximation]\label{d:app}
An {\em $\cM$-normal approximation} of $T$ is given by a pair $(\cK, F)$ such that
\begin{itemize}
\item[(A1)] $F: \cM\to \Iq (\bU)$ is Lipschitz and takes the special form 
\[
F (x) = \sum_i \a{x+N_i (x)},
\]
with $N_i (x)\perp T_x \cM$ for every $x\in \cM$ and $i=1,\ldots, Q$.
\item[(A2)] $\cK\subset \cM$ is closed, contains $\Phii \big(\bGam\cap [-\frac{7}{2}, \frac{7}{2}]^m\big)$ and
\[
\bT_F \res \p^{-1} (\cK) = T \res \p^{-1} (\cK).
\]
\end{itemize}
The map $N = \sum_i \a{N_i}:\cM \to \Iq (\bU)$ is
called {\em the normal part} of $F$.
\end{definition}

\medskip

As proven in \cite[Theorem~2.4]{DS4}, the center manifold $\cM$ of the previous
section allows to construct an $\cM$-normal approximation which does
approximate the area minimizing current $T$.
In order to state the result, 
to each $L\in \sW$ we associate a {\em Whitney region} $\cL$ on $\cM$ as follows:
\[
\cL := \Phii \left(H\cap [-\frac{7}{2},\frac{7}{2}]^m\right),
\]
where $H$ is the cube concentric to $L$ with $\ell (H) = \frac{17}{16} \ell (L)$.
We will use $\|N|_{\cL}\|_0$ to denote the quantity
$\sup_{x\in \cL} \cG (N (x), Q\a{0})$.

\begin{theorem}\label{t:approxN}
Let $\gamma_2 := \frac{\gamma_1}{4}$, with $\gamma_1$ the constant of Theorem~\ref{t:approx}.
Under the hypotheses of Theorem~\ref{t:cm},
if $\eps_2$ is sufficiently small, then
there exist constants $\beta_2,\delta_2>0$
and an $\cM$-normal approximation $(\cK, F)$ such that
the following estimates hold on every Whitney region $\cL$:
\begin{gather}
\Lip (N|
_\cL) \leq C E^{\gamma_2} \ell (L)^{\gamma_2} \quad\mbox{and}\quad  \|N|
_\cL\|_{C^0}\leq C E^{\frac{1}{2m}} \ell (L)^{1+\beta_2},\label{e:Lip_regional}\\
\int_{\cL} |DN|^2 \leq C E \,\ell (L)^{m+2-2\delta_2},\label{e:Dir_regional}
\\
|\cL\setminus \cK| + \|\bT_F - T\| (\p^{-1} (\cL)) \leq C E^{1+\gamma_2} \ell (L)^{m+2+\gamma_2}.\label{e:err_regional}
\end{gather}
Moreover, for any $a>0$ and any Borel set $\cV\subset \cL$, we have
\begin{multline}\label{e:av_region}
\int_\cV |\etaa\circ N| \leq 
C E \left(\ell (L)^{3+\frac{\beta_2}{3}}+ a\,\ell (L)^{2+\frac{\gamma_2}{2}}\right) |\cV|\\
 + \frac{C}{a} 
\int_\cV \cG \big(N, Q \a{\etaa\circ N}\big)^{2+\gamma_2}.
\end{multline} 
\end{theorem}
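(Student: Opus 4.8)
The plan is to build the normal approximation $N$ by patching together, with the same partition of unity used for $\phii$ in the proof of Theorem~\ref{t:cm}, a family of local Lipschitz approximations of $T$, one over each cube of the Whitney decomposition $\sW$, and then to reduce the estimates \eqref{e:Lip_regional}--\eqref{e:err_regional} cube by cube to Almgren's strong approximation (Theorem~\ref{t:approx}), while \eqref{e:av_region} will additionally use the harmonic approximation (Theorem~\ref{t:harmonic_final}) together with the defining property of $\cM$. Fix $L\in\sW$ and set $\pi_L:=T_{p_L}\cM$, which by \eqref{e:g1}--\eqref{e:g2} is $C^{3,\kappa}$-close to $\pi_0$. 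The stopping criteria selecting the cubes of $\sW$ (among them an \emph{excess} criterion) guarantee that the father of $L$ was not refined because of excess; hence $\bE_L:=\bE(T,\bC_{r_L}(p_L,\pi_L))\le C\,E\,\ell(L)^{2-2\delta_2}$, which is $<\eps_1$ once $\eps_2$ is small. Applying Theorem~\ref{t:approx} in $\bC_{r_L}(p_L,\pi_L)$ produces a Lipschitz $Q$-valued map $f_L$ and a closed set $K_L$ with \eqref{e:main(i)}--\eqref{e:main(iii)}: in particular $\Lip(f_L)\le C\bE_L^{\gamma_1}$, $\bG_{f_L}$ coincides with $T$ over $K_L\times\R^n$, $|B_{r_L}(p_L,\pi_L)\setminus K_L|\le C\bE_L^{1+\gamma_1}r_L^m$, and the Dirichlet energy of $f_L$ controls $\|T\|$ in the cylinder up to an error of the same size; combining \eqref{e:g4} with \eqref{e:pre_height} and the \emph{height} criterion one also gets that the sup-distance between the sheets of $f_L$ and the center-manifold graph over $L$ is of order $E^{\frac{1}{2m}}\ell(L)^{1+\beta_2}$.

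Since $\cM=\gr(\phii)$ is $C^{3,\kappa}$ with $\|D\phii\|_{C^{2,\kappa}}\le CE^{1/2}$ (Theorem~\ref{t:cm}) and is tangent to $\pi_L$ at $p_L$, the graph $\bG_{f_L}$ over the relevant region can be rewritten as the graph of a $Q$-valued map $N_L$ into the normal bundle of $\cM$, i.e.\ $N_L(x)\perp T_x\cM$, with the reparametrization introducing errors controlled by the second fundamental form of $\cM$ times $\|N_L\|_0$. Set $N:=\sum_L (\chi_L\circ\p)\,N_L$, with $\chi_L$ the partition of unity of the proof of Theorem~\ref{t:cm}: by property (w3) only boundedly many $N_L$ overlap over any Whitney region $\cL$ and their domains are comparable in size, so the patching error on $\cL$ is governed by the differences $g_L-g_H$, hence by \eqref{e:g2}. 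A suitable choice of the closed set $\cK$, containing the contact set $\Phii(\bGam\cap[-\frac{7}{2},\frac{7}{2}]^m)$ and assembled from the $K_L$, makes (A2) of Definition~\ref{d:app} hold, while (A1) holds by construction.

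Now each of \eqref{e:Lip_regional}, \eqref{e:Dir_regional}, \eqref{e:err_regional} follows on a fixed Whitney region. For \eqref{e:Lip_regional}: $\Lip(N|_\cL)\le\Lip(f_L)+C\|D^2\phii\|_0\|N|_\cL\|_0+(\text{patching error})$, and $\bE_L^{\gamma_1}\le C(E\ell(L)^{2-2\delta_2})^{\gamma_1}\le CE^{\gamma_2}\ell(L)^{\gamma_2}$ since $\gamma_2=\gamma_1/4<\gamma_1$, $\ell(L)\le1$ and $\delta_2$ is small, the curvature and patching contributions being of strictly lower order; the $C^0$ bound was recorded above. For \eqref{e:Dir_regional} one compares $T$ with the competitor obtained by pushing $\cM$ forward and uses the Taylor expansion of $\mass(\bG_f)$ (Proposition~\ref{c:taylor_area}) together with \eqref{e:main(iii)}: minimality of $T$ then yields $\int_\cL|DN|^2\le CE\,\ell(L)^{m+2-2\delta_2}$. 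For \eqref{e:err_regional}, both $|\cL\setminus\cK|$ and $\|\bT_F-T\|(\p^{-1}(\cL))$ are bounded by sums over the relevant cubes of terms $C\bE_L^{1+\gamma_1}r_L^m\le C(E\ell(L)^{2-2\delta_2})^{1+\gamma_1}\ell(L)^m$, which is $\le CE^{1+\gamma_2}\ell(L)^{m+2+\gamma_2}$ once $\gamma_2$ is small relative to $\gamma_1$ and $\delta_2$.

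Inequality \eqref{e:av_region} is the delicate point, and is where the construction of $\cM$ enters essentially: by design the interpolating functions $g_L$ are $C^{3,\kappa}$-smoothed versions of the averages $\etaa\circ f_L$ (equivalently, $\cM$ satisfies the minimal surfaces Euler--Lagrange system up to a higher-order error, so that its mean curvature is small). Replacing $f_L$ by a $\D$-minimizing $w_L$ via Theorem~\ref{t:harmonic_final}, for which $\etaa\circ w_L$ is genuinely harmonic and hence well approximated by its linearization, one finds that the ``linear part'' of $\etaa\circ N$ is controlled by curvature terms of the form $E\,\ell(L)^{3+\frac{\beta_2}{3}}$, while $\etaa\circ N$ minus its linear part is controlled by the non-linear size $\cG(N,Q\a{\etaa\circ N})$ of $N$; splitting off this remainder by Young's inequality with the free parameter $a$ produces exactly the two terms on the right-hand side of \eqref{e:av_region}. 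I expect \emph{this step, together with the bookkeeping that turns the per-cube estimates into regional estimates with the correct powers of $\ell(L)$}, to be the main obstacle: it hinges on the precise quantitative form of the stopping criteria defining $\sW$ and on the way the $g_L$ are manufactured from the harmonic approximations, which is where ``$\cM$ tracks the average of the sheets'' (the \emph{splitting-before-tilting} mechanism) becomes quantitative. A secondary, purely technical, point is to verify that the normal-coordinate reparametrizations over $\cM$ on overlapping cubes are mutually compatible, so that $N$ is well defined and the patching errors do not accumulate over the infinitely many cubes; the full argument is carried out in \cite[Theorem~2.4]{DS4}.
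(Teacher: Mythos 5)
Your overall route does coincide with the paper's: per-cube application of Theorem~\ref{t:approx} over the optimal plane $\pi_L$ (justified, as you say, because the father of any $L\in\sW$ survived the excess criterion, so $\bE(T,\B_L)\leq C\,E\,\ell(L)^{2-2\delta_2}$), reparametrization onto the normal bundle of $\cM$, per-region bookkeeping for \eqref{e:Lip_regional}--\eqref{e:err_regional}, and, for \eqref{e:av_region}, the harmonic approximation combined with the fact that the interpolating functions are mollifications of $\etaa\circ f_L$. But the step where you actually assemble $N$ fails as written: you set $N:=\sum_L(\chi_L\circ\p)\,N_L$, and a partition-of-unity combination of $Q$-valued maps is not defined, since $\Iq(\R^{m+n})$ is not a vector space and near branch points there is no coherent labelling of the sheets of the various $N_L$ that would allow you to form $\chi_L N_L+\chi_H N_H$ consistently -- this is exactly the obstruction discussed after \eqref{e:multi}, and it is not the ``secondary, purely technical'' compatibility issue you relegate to the last sentence. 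The construction behind the theorem uses a different mechanism: over each Whitney region the reparametrized sheets of $f_L$ \emph{coincide with the current} on a large closed set; the set $\cK$ is built from these coincidence sets, where the value of $N$ is dictated by $T$ itself and is therefore automatically consistent on overlaps of regions, and one then extends off $\cK$ by the Lipschitz extension theorem for $Q$-valued maps -- this is what the paper means by ``a suitable extension of the reparametrization of the Lipschitz approximation $f_L$''. With that gluing in place, your per-cube reductions of \eqref{e:Lip_regional}--\eqref{e:err_regional} to \eqref{e:main(i)}--\eqref{e:main(iii)}, including the numerology $\gamma_2=\gamma_1/4$ and $\delta_2\ll\gamma_1$, are indeed how the first three estimates are obtained.

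The second gap is that \eqref{e:av_region}, which you rightly single out as the heart of the statement, remains in your text a programme rather than a proof: the claims that the ``linear part'' of $\etaa\circ N$ is of size $E\,\ell(L)^{3+\beta_2/3}$ and that the remainder is dominated, after Young's inequality with the free parameter $a$, by $\cG(N,Q\a{\etaa\circ N})^{2+\gamma_2}$ are precisely what has to be proved. This requires (i) the first variation of $T$, showing that $\etaa\circ f_L$ is almost harmonic with superlinear errors, (ii) the specific design of $h_L=(\etaa\circ f_L)*\varrho_{\ell(L)}$ with a radial kernel whose second moment vanishes, which is what makes the center manifold track the average of the sheets to the stated order, and (iii) the higher-integrability exponent $2+\gamma_2$ inherited from Theorem~\ref{t:approx}. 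The paper itself only sketches this point and defers the details to \cite[Theorem~2.4]{DS4}, so your identification of the ingredients is correct, but as a proof of \eqref{e:av_region} the proposal is incomplete.
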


Let us briefly explain the conclusions of the theorem.
The estimates in \eqref{e:Lip_regional} and \eqref{e:Dir_regional}
concern the regularity
properties of the normal approximation $N$, and will play an
important role in many of the subsequent arguments.
However, the key properties of $N$ are \eqref{e:err_regional}
and \eqref{e:av_region}: the former
estimates the error done in the approximation on every Whitney region; while the
latter estimates
the $L^1$ norm of the average of $N$, which is a measure of
the centering of the center manifold.
Note that both estimates are in some sense ``superlinear'' with respect to the
relevant parameters: indeed, as it will be better understood later on,
they involve either a superlinear power of the excess
$E^{1+\gamma_2}$ or the $L^{2+\gamma_2}$ norm of $N$ (which is of higher order
with respect to the ``natural'' $L^2$ norm).

\subsection{Construction criteria}
The estimates and the results of the previous two subsections
depend very much on the way the Whitney decomposition, the interpolating
functions and the normal approximation are constructed.

\medskip

We start recalling the notation $p_L = (x_L, y_L)$ where $L$ is a dyadic cube,
$x_L$ its center and $y_L \in \pi_0^\perp$ is chosen in such a way
that $p_L \in \supp(T)$.
Moreover, we set
\[
\B_L := \B_{64 r_L} (p_L),
\]
where we recall that $r_L:= M_0 \sqrt{m} \,\ell (L)$ for some large constant
$M_0 \in \N$.

We define the families of cubes of the Whitney decomposition
\[
\sW = \sW_e \cup \sW_h \cup \sW_n
\quad\text{and}\quad
\sS \subset \sC.
\]
We use the notation
$\sS^j = \sS\cap \sC^j, \sW^j = \sW\cap \sC^j$ and so on.

We recall the notation for the excess,
\begin{equation*}
\bE(T,\B_r (x)):=\min_\tau \bE (T, \B_r (x), \tau) = \bE(T,\B_r (x),\pi).
\end{equation*} 
The $m$-dimensional planes $\pi$ realizing the minimum above are
called {\em optimal planes} of $T$ in a ball $\B_r (x)$ if, in addition,
$\pi$ optimizes the height among all planes that optimize the excess:
\begin{align}\label{e:optimal_pi_2}
\bh(T,\B_r(x),\pi) & = \min \big\{\bh(T,\B_r (x),\tau): \tau \mbox{ satisfies \eqref{e:optimal_pi}}\big\}\notag\\
& = : \bh(T,\B_r(x)).
\end{align}
An optimal plane in the ball $\B_L$ is denoted by $\pi_L$.

We fix a big natural number $N_0$, and constants $C_e, C_h>0$,
and we define $\sW^i = \sS^i = \emptyset $ for $i < N_0$. We proceed with $j\geq N_0$ inductively: if the father of $L\in \sC^j$ is {\em not} in $\sW^{j-1}$, then 
\begin{itemize}
\item[(EX)] $L\in \sW^j_e$ if $\bE (T, \B_L) > C_e\, E\, \ell (L)^{2-2\delta_2}$;
\item[(HT)] $L\in \sW_h^j$ if $L\not \in \mathscr{W}_e^j$ and $\bh (T, \B_L) > C_h E^{\frac{1}{2m}} \ell (L)^{1+\beta_2}$;
\item[(NN)] $L\in \sW_n^j$ if $L\not\in \sW_e^j\cup \sW_h^j$ but it intersects an element of $\sW^{j-1}$;
\end{itemize}
if none of the above occurs, then $L\in \sS^j$.

We finally set
\begin{equation}\label{e:bGamma}
\bGam:= [-4,4]^m \setminus \bigcup_{L\in \sW} L = \bigcap_{j\geq N_0} \bigcup_{L\in \sS^j} L.
\end{equation}

Observe that, if $j>N_0$ and $L\in \sS^j\cup \sW^j$, then necessarily its father belongs to $\sS^{j-1}$.

\medskip

For what concerns the interpolating functions $g_L$, they are
obtained as the result of the following procedure.

\begin{itemize}
\item[(1)] Let $L\in \sS\cup \sW$ and $\pi_L$ be an optimal plane.
Then, $T\res\bC_{32 r_L} (p_L, \pi_L)$ fulfills 
the assumptions of the approximation Theorem~\ref{t:approx}
in the cylinder $\bC_{32 r_L} (p_L, \pi_L)$, and we can then construct
a Lipschitz approximation
\[
f_L: B_{8r_L} (p_L, \pi_L)  \to \Iq (\pi_L^\perp).
\]

\item[(2)] We let 
$h_L:B_{7r_L} (p_L, \pi_L) \to \pi_L^\perp$ be a regularization of the average given by
\[
h_L:= (\etaa\circ f_L) * \varrho_{\ell (L)},
\]
where $\varrho\in C^\infty_c (B_1)$ is radial, $\int \varrho =1$ and $\int |x|^2 \varrho (x)\, dx = 0$.

\item[(3)] Finally, we find a smooth map $g_L: B_{4r_L} (p_L, \pi_0)\to \pi_0^\perp$ such that 
\[
\bG_{g_L} = \bG_{h_L}\res \bC_{4r_L} (p_L, \pi_0),
\]
where we recall that $\bG_u$ denotes the current induced by the graph
of a function $u$.
\end{itemize}

The fact that the above procedure can be applied follows from the choice of the
stopping criteria for the construction of the Whitney decomposition.
We refer to \cite{DS4} for a detailed proof. Here we only stress the fact that
this construction depends strongly on the choice of the constants involved:
in particular,
$C_e,C_h,\beta_2,\delta_2, M_0$ are positive real numbers and $N_0$ a natural number satisfying in particular
\begin{equation}\label{e:delta+beta}
\beta_2 = 4\,\delta_2 = \min \left\{\frac{1}{2m}, \frac{\gamma_1}{100}\right\},
\end{equation}
where $\gamma_1$ is the constant of Theorem~\ref{t:approx}, and
\begin{equation}
M_0 \geq C_0 (m,n,\bar{n},Q) \geq 4\,  \quad
\mbox{and}\quad \sqrt{m} M_0 2^{7-N_0} \leq 1\, . \label{e:N0}
\end{equation}
Note that $\beta_2$ and $\delta_2$ are fixed, while the other parameters are not fixed but are subject to further restrictions in the various statements, respecting a
very precise ``hierarchy'' (cp.~\cite[Assumption 1.9]{DS4}). 

\medskip

Finally, we add also a few words concerning the construction
of the normal approximation $N$.
In every Whitney region $\cL$ the map $N$ is a suitable extension of the reparametrization of the Lipschitz approximation $f_L$.
Then the estimates \eqref{e:Lip_regional}, \eqref{e:Dir_regional} and
\eqref{e:err_regional} follow easily from Theorem~\ref{t:approx}.
The most intricate proof is the one of \eqref{e:av_region} for which the choice of
the regularization $h_L$ deeply plays a role.
The main idea is that, on the optimal plane $\pi_L$, the average of the sheets of the
minimizing current is almost the graph of a harmonic function.
Therefore, a good way to regularize it (which actually would keep it unchanged
if it were exactly harmonic) is to convolve with a radial symmetric mollifier.
This procedure, which we stress is not the only possible one, will indeed preserve the main properties of the average.

\subsection{Splitting before tilting}
The above criteria are not just important for the construction purposes,
but also lead to a couple of important estimates which
will be referred to as \textit{splitting-before-tilting} estimates.
Indeed, it is not a case that the powers of the side-length in the (EX) and (HT)
criteria look like the powers in the familiar decay of the excess
and in the height bound.
In fact it turns out that, following the arguments for the height bound and for the
decay of the excess, one can infer two further consequences of the
Whitney decomposition's criteria.

\subsubsection{(HT)-cubes}
If a dyadic cube $L$ has been selected by the Whitney decomposition procedure
for the height criterion, then the $\cM$-normal approximation above the corresponding
Whitney region needs to have a large pointwise separation (see \eqref{e:s3} below).

\begin{proposition}[(HT)-estimate]\label{p:separ}
If $\eps_2$ is sufficiently small, then the following conclusions hold for every $L\in \sW_h$:
\begin{gather}
\Theta (T, p) \leq Q - \frac{1}{2}\quad\forall\; p\in \B_{16 r_L} (p_L),\label{e:s1}\\
L\cap H= \emptyset \quad \forall\;H\in \sW_n \; \text{ with }
\ell (H) \leq \frac{1}{2} \ell (L);\label{e:s2}\\
\cG \big(N (x), Q \a{\etaa \circ N (x)}\big) \geq \frac{1}{4} C_h E^{\frac{1}{2m}}
\ell (L)^{1+\beta_2}\quad \forall\;x\in \cL.\label{e:s3}
\end{gather}
\end{proposition}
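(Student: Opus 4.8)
Since $L\in\sW_h$, its father $L'$ belongs to $\sS$ (by the remark following the stopping criteria), so that none of $(EX)$, $(HT)$, $(NN)$ was activated at $L'$; in particular
\[
\bE(T,\B_{L'})\le C_e\,E\,\ell(L')^{2-2\delta_2}\qquad\text{and}\qquad\bh(T,\B_{L'})\le C_h\,E^{\frac1{2m}}\,\ell(L')^{1+\beta_2}.
\]
Moreover $p_L$ and $p_{L'}$ differ by at most $C\,\ell(L)$ in $\pi_0$ and by at most $\bh(T,\B_{L'})\le\ell(L)$ in $\pi_0^\perp$, so that, taking $M_0$ large, one has the inclusions $\cL\subset\B_{16 r_L}(p_L)\subset\B_{L'}$ and $\B_L\subset\B_{L'}$. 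Throughout, the relations \eqref{e:delta+beta}, \eqref{e:N0} and the hierarchy of the constants ($M_0$ fixed first, then $N_0$, then $C_e,C_h$, finally $\eps_2$) are what make the various error terms below subordinate to the threshold $C_h E^{\frac1{2m}}\ell(L)^{1+\beta_2}$.

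I would prove \eqref{e:s1} by contradiction. If $\Theta(T,p_*)>Q-\tfrac12$ for some $p_*\in\B_{16 r_L}(p_L)$, then, since $\Theta(T,\cdot)\le Q$ in this region (a standard consequence of $\p_\sharp T=Q\a{B}$, the monotonicity formula and \eqref{e:basic2}), $p_*$ is essentially a point where all $Q$ sheets coincide. Applying Theorem~\ref{t:approx} in $\bC_{32 r_{L'}}(p_{L'},\pi_{L'})$ -- whose cylindrical excess is $\le C\,\bE(T,\B_{L'})\le CE\ell(L')^{2-2\delta_2}$ because $\pi_{L'}$ is optimal -- together with the harmonic approximation of Theorem~\ref{t:harmonic_final}, the Lipschitz approximation $f_{L'}$ is $L^2$-close to a $\D$-minimizer whose sheets all pass through a common point, whence by the interior estimates for $\D$-minimizers its oscillation on $B_{\ell(L')}$ is bounded by $C\,\bE(T,\B_{L'})^{1/2}\ell(L')$; estimating the error set one concludes
\[
\bh(T,\B_L)\le C\,\bE(T,\B_{L'})^{\frac12}\,\ell(L')+(\text{lower order})\le C\,E^{\frac12}\,\ell(L)^{2-\delta_2}.
\]
Since $m\ge2$ gives $E^{\frac12}\le\eps_2^{1-\frac1m}E^{\frac1{2m}}$ and $\ell(L)^{2-\delta_2}\le\ell(L)^{1+\beta_2}$, for $\eps_2$ small this contradicts $\bh(T,\B_L)>C_h E^{\frac1{2m}}\ell(L)^{1+\beta_2}$. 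The key point is that, $L$ not being in $\sW_e$, the \emph{local} excess at the scale of $\B_L$ is small, which turns the generic linear height bound into the much smaller $E^{1/2}\ell(L)^{2-\delta_2}$.

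For \eqref{e:s2}, the Whitney property $(w3)$ gives $\ell(H)\ge\tfrac12\ell(L)$ for every $H\in\sW$ meeting $L$, so only the borderline case $\ell(H)=\tfrac12\ell(L)$ must be excluded. There the father $\tilde H$ of $H$ has $\ell(\tilde H)=\ell(L)$, lies in $\sS$ (as $H\in\sW$), is distinct from $L$ (descendants of cubes of $\sW$ are never inspected) and still meets $L$; then $\B_{\tilde H}$ and $\B_L$ have equal radii and centres within $C\ell(L)$, and combining the monotonicity of the height with the smallness of the excess (which makes the optimal planes of the two balls close) one gets $C_h E^{\frac1{2m}}\ell(L)^{1+\beta_2}\ge\bh(T,\B_{\tilde H})\ge\bh(T,\B_L)-(\text{lower order})>C_h E^{\frac1{2m}}\ell(L)^{1+\beta_2}$, which is impossible.

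Finally \eqref{e:s3}, which I expect to be the main obstacle. First one transfers the largeness of $\bh(T,\B_L)$ to a lower bound for the separation of $N$: since $L\notin\sW_e$ the excess in $\B_L$ is $\le C_e E\ell(L)^{2-2\delta_2}$, so -- exactly as in the proof of \eqref{e:s1} -- the contributions to the height of $T$ coming from the tilting of the sheets, from the curvature of $\cM$, from $\etaa\circ N$ (bounded via \eqref{e:av_region}) and from the error set ($\|\bT_F-T\|(\p^{-1}(\cL))\le CE^{1+\gamma_2}\ell(L)^{m+2+\gamma_2}$) are all subordinate to the threshold, so that the height being realised inside $\B_L$ forces $\sup\cG(N,Q\a{\etaa\circ N})\ge c\,C_h E^{\frac1{2m}}\ell(L)^{1+\beta_2}$ on a region comparable to $\cL$. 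It then remains to upgrade this to the pointwise bound at every $x\in\cL$; here I would use, on one hand, that by \eqref{e:Lip_regional} the Lipschitz function $x\mapsto\cG(N(x),Q\a{\etaa\circ N(x)})$ has Lipschitz constant $\le CE^{\gamma_2}\ell(L)^{\gamma_2}$ with $\gamma_2=\tfrac{\gamma_1}{4}\ge25\,\beta_2$, so that -- by \eqref{e:delta+beta} and $C_h\gg M_0$ -- its oscillation over the region above $L$ is less than $\tfrac14 cC_h E^{\frac1{2m}}\ell(L)^{1+\beta_2}$; and, on the other hand, \eqref{e:s1}, which excludes that the $Q$ sheets above any $x\in\cL$ collapse (such a point would produce a multiplicity $Q$ point of $T$ inside $\B_{16 r_L}(p_L)$). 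Combining the two yields $\cG(N(x),Q\a{\etaa\circ N(x)})\ge\tfrac14 C_h E^{\frac1{2m}}\ell(L)^{1+\beta_2}$ for all $x\in\cL$. The delicate step is precisely this last propagation -- controlling the oscillation of the separation over a full Whitney region against the height threshold -- which is exactly why the exponents $\beta_2,\delta_2,\gamma_2$ have to be tied together as in \eqref{e:delta+beta}.
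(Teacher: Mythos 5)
The survey you are working from does not actually prove Proposition~\ref{p:separ} (it refers to \cite{DS4}), so the comparison is with the proof there. That proof rests on a single quantitative separation lemma which your proposal never establishes but implicitly uses three times: since the father $L'$ of $L$ lies in $\sS$, one has $\bE(T,\B_{L'})\leq C_e E\ell(L')^{2-2\delta_2}$ and $\bh(T,\B_{L'})\leq C_h E^{\frac{1}{2m}}\ell(L')^{1+\beta_2}$, and combining the strong approximation in $\bC_{32r_{L'}}(p_{L'},\pi_{L'})$, the harmonic approximation and the interior H\"older estimate for $\D$-minimizers with the lower bound $\bh(T,\B_L)>C_h E^{\frac{1}{2m}}\ell(L)^{1+\beta_2}$, one shows that $\supp(T)$ over a ball of radius comparable to $16r_L$ splits into two Lipschitz ``stacks'' of integer multiplicities $Q_1,Q_2\geq 1$, $Q_1+Q_2=Q$, separated by a gap $\gtrsim C_h E^{\frac{1}{2m}}\ell(L)^{1+\beta_2}$ (the gap region carries no support, by the lower density bound $\Theta\geq1$ on $\supp(T)$ against the superlinear mass of the bad set). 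All three conclusions then drop out: \eqref{e:s1} because any $p\in\B_{16r_L}(p_L)$ sees only one stack at the scale of the gap, so $\Theta(T,p)\leq Q_i+CE\leq Q-\frac12$; \eqref{e:s2} because a touching $H\in\sW_n$ with $\ell(H)\leq\frac12\ell(L)$ would have $\B_H$ inside the separated region, hence $\bh(T,\B_H)>C_hE^{\frac{1}{2m}}\ell(H)^{1+\beta_2}$, forcing $H\in\sW_e\cup\sW_h$; \eqref{e:s3} because over the coincidence set the values of $F$ must meet both stacks, and the Lipschitz bound on $N$ is only needed to bridge the tiny set $\cL\setminus\cK$.

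Without that lemma each of your three arguments has a concrete gap. For \eqref{e:s1}: the step from $\Theta(T,p_*)>Q-\frac12$ to ``all sheets of the $\D$-minimizer pass through a common point'' is precisely the quantitative separation-versus-density estimate to be proven, not a given; moreover the harmonic approximation controls $f$ only in $L^2$, while $\bh(T,\B_L)$ is a supremum over $\supp(T)$ including the part over the bad set, so your claimed bound $\bh(T,\B_L)\leq CE^{\frac12}\ell(L)^{2-\delta_2}$ does not follow (the bad-set/monotonicity argument yields at best an error of order $E^{\frac{1}{m}}\ell(L)^{1+\frac{2}{m}}$, and ``estimating the error set'' is exactly the nontrivial part). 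For \eqref{e:s2}: invoking (w3) is circular --- it is a property the construction must be shown to satisfy, and in any case it permits $\ell(H)=\frac12\ell(L)$, which is what \eqref{e:s2} must exclude --- and the borderline case is handled with a false ``monotonicity of the height'': $\B_L$ is not contained in $\B_{\tilde H}$ (equal radii, different centers), so $\bh(T,\B_{\tilde H})\geq\bh(T,\B_L)-\text{(lower order)}$ has no justification. For \eqref{e:s3}: propagating a lower bound over the whole Whitney region via $\Lip\big(\cG(N,Q\a{\etaa\circ N})\big)\leq CE^{\gamma_2}\ell(L)^{\gamma_2}$ gives an oscillation of order $E^{\gamma_2}\ell(L)^{1+\gamma_2}$, and since nothing forces $\gamma_2\geq\frac{1}{2m}$ (the exponent $\gamma_1$ of Theorem~\ref{t:approx} is a small constant), this does not beat the threshold $C_hE^{\frac{1}{2m}}\ell(L)^{1+\beta_2}$ as $E\to0$ at fixed $\ell(L)$; also \eqref{e:s1} only forbids high-density points and gives no quantitative lower bound on the spread of $N$, so it cannot replace the stack separation in this step.
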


A simple corollary of the previous proposition is the following.

\begin{corollary}\label{c:domains}
Given any $H\in \sW_n$ there is a chain $L =L_0, L_1, \ldots, L_j = H$ such that:
\begin{itemize}
\item[(a)] $L_0\in \sW_e$ and $L_i\in \sW_n$ for all $i=1, \ldots, j$; 
\item[(b)] $L_i\cap L_{i-1}\neq\emptyset$ and $\ell (L_i) = \frac{1}{2} \ell (L_{i-1})$ for all $i=1,\ldots, j$.
\end{itemize}
In particular,  $H\subset B_{3\sqrt{m}\ell (L)} (x_L, \pi_0)$.
\end{corollary}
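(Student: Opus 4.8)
The plan is to build the chain by tracing backwards from $H$ through the layers of the Whitney decomposition, using the defining criterion (NN) at each step, and to observe that the splitting-before-tilting estimate \eqref{e:s2} prevents the chain from ever entering an $(\mathrm{HT})$-cube.

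Concretely, I would first set up the recursion. Say $H\in \sW_n^{j_0}$. By (NN) there is a cube $L'\in \sW^{j_0-1}$ with $L'\cap H\neq\emptyset$ (in particular $\sW^{j_0-1}\neq\emptyset$, so $j_0-1\ge N_0$), and then $\ell(L')=2\,\ell(H)$. Now $L'\in \sW=\sW_e\cup\sW_h\cup\sW_n$: the case $L'\in \sW_h$ is impossible, because $L'$ would be an $(\mathrm{HT})$-cube meeting the cube $H\in\sW_n$ with $\ell(H)=\tfrac12\ell(L')$, contradicting \eqref{e:s2} of Proposition~\ref{p:separ}. So $L'\in\sW_e\cup\sW_n$; if $L'\in\sW_e$ we stop, and if $L'\in\sW_n$ we repeat the argument with $L'$ in place of $H$.

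Next I would check that this recursion terminates at an $\sW_e$-cube after finitely many steps. Each step decreases the dyadic level by exactly one, so the levels strictly decrease and are bounded below by $N_0$ (as $\sW^i=\emptyset$ for $i<N_0$); hence the recursion stops after at most $j_0-N_0$ steps. Moreover no cube of level $N_0$ lies in $\sW_n$ (again by (NN) and $\sW^{N_0-1}=\emptyset$), and by the previous paragraph the cube produced at each step never lies in $\sW_h$; therefore the last cube reached belongs to $\sW_e$. Reversing the list gives cubes $L=L_0,L_1,\dots,L_j=H$ with $L_0\in\sW_e$, $L_i\in\sW_n$ for $i\ge 1$, and $L_{i-1}\cap L_i\neq\emptyset$ with $\ell(L_i)=\tfrac12\ell(L_{i-1})$, i.e.\ properties (a) and (b).

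Finally, for the inclusion I would just sum a geometric series. Since $\ell(L_i)=2^{-i}\ell(L)$ and consecutive cubes intersect, every point of $L_i$ lies within $\diam(L_i)=2\sqrt m\,\ell(L_i)$ of $L_{i-1}$; iterating down to $L_0$, whose points are within $\sqrt m\,\ell(L)$ of $x_L$, one gets for every $z\in H=L_j$
\[
|z-x_L|\le \sqrt m\,\ell(L)+\sum_{i=1}^{j}2\sqrt m\,\ell(L_i)=\sqrt m\,\ell(L)\Big(1+2\sum_{i=1}^{j}2^{-i}\Big)<3\sqrt m\,\ell(L),
\]
which is exactly $H\subset B_{3\sqrt m\,\ell(L)}(x_L,\pi_0)$. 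The only non-routine point, and the one where the stopping criteria for the Whitney decomposition genuinely enter, is the exclusion of $\sW_h$-cubes from the chain via \eqref{e:s2}; everything else is elementary bookkeeping about dyadic cubes.
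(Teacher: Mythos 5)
Your argument is correct and is essentially the one the paper has in mind: the (NN) criterion supplies at each dyadic level a predecessor cube of exactly twice the side-length, the separation estimate \eqref{e:s2} of Proposition~\ref{p:separ} excludes (HT)-cubes from the chain so that the induction (which terminates since $\sW^i=\emptyset$ for $i<N_0$) must end at a cube of $\sW_e$, and the geometric series of diameters yields $H\subset B_{3\sqrt{m}\,\ell(L)}(x_L,\pi_0)$. No gaps to report.
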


We use this last corollary to partition $\sW_n$.

\begin{definition}[Domains of influence]\label{d:domains}
We first fix an ordering of the cubes in $\sW_e$ as $\{J_i\}_{i\in \mathbb N}$ so that their side-length decreases. Then $H\in \sW_n$
belongs to $\sW_n (J_0)$ if there is a chain as in Corollary \ref{c:domains} with $L_0 = J_0$.
Inductively, $\sW_n (J_r)$ is the set of cubes $H\in \sW_n \setminus \cup_{i<r} \sW_n (J_i)$ for which there is
a chain as in Corollary \ref{c:domains} with $L_0 = J_r$.
\end{definition}

\subsubsection{(Ex)-cubes}
Similarly, if a cube of the Whitney decomposition is selected by the (EX)
condition, i.e.~the excess does not decay at some given scale, then
a certain amount of separation between the sheets of the current must also
in this case occur.

\begin{proposition}[(EX)-estimate]\label{p:splitting}
If $L\in \sW_e$ and $\Omega = \Phii (B_{\ell (L)/4} (q, \pi_0))$ for some point $q\in \pi_0$ with $\dist (L, q) \leq 4\sqrt{m} \,\ell (L)$, then
\begin{align}
&C_e E \ell(L)^{m+2-2\delta_2} \leq \ell (L)^m \bE (T, \B_L) \leq C \int_\Omega |DN|^2\, ,\label{e:split_1}\\
&\int_{\cL} |DN|^2 \leq C \ell (L)^m \bE (T, \B_L) \leq C \ell (L)^{-2} \int_\Omega |N|^2\, . \label{e:split_2}
\end{align}
\end{proposition}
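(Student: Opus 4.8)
The plan is to reduce all four inequalities to four ingredients: (i) the stopping condition (EX) defining the class $\sW_e$; (ii) the estimates of Theorem~\ref{t:approxN}; (iii) a two--sided comparison between the excess of $T$ in $\B_L$ and the Dirichlet energy of the normal approximation $N$; and (iv) a reverse Poincar\'e inequality for $N$. Two of the four inequalities are the cheap ones. The first inequality in \eqref{e:split_1} is, after multiplying by $\ell(L)^m$ and using the elementary identity relating $\|T\|(\B_L)$ and $\int_{\B_L}|\vec T-\vec\pi_L|^2\,d\|T\|$, nothing but the condition (EX) that placed $L$ in $\sW_e$. The first inequality in \eqref{e:split_2} follows by combining the bound $\int_{\cL}|DN|^2\le C\,E\,\ell(L)^{m+2-2\delta_2}$ of \eqref{e:Dir_regional} with (EX) once more (recalling the hierarchy of constants, which fixes $C_e$ large). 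So the substance lies in the two right--hand inequalities $\ell(L)^m\bE(T,\B_L)\le C\int_\Omega|DN|^2$ and $\ell(L)^m\bE(T,\B_L)\le C\,\ell(L)^{-2}\int_\Omega|N|^2$.

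First I would pin down the scale. Since $L\in\sW_e\subset\sW$, its father $\hat L$, and hence every ancestor of $L$, belongs to $\sS$ (the father of any cube in $\sS^j\cup\sW^j$ lies in $\sS^{j-1}$). Because $p_L$ and $p_{\hat L}$ are $o(r_L)$ apart — their $\pi_0$--projections differ by $\lesssim\sqrt m\,\ell(L)$ and their heights are bounded by \eqref{e:pre_height} — one has $\B_L\subset C\,\B_{\hat L}$, and comparing the optimal planes $\pi_L,\pi_{\hat L}$ gives $\bE(T,\B_L)\le C\,\bE(T,\B_{\hat L})\le C\,C_e\,E\,\ell(\hat L)^{2-2\delta_2}$. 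Together with (EX) this pins $C_e\,E\,\ell(L)^{2-2\delta_2}<\bE(T,\B_L)\le C\,E\,\ell(L)^{2-2\delta_2}$, and the same argument applied to all ancestors shows that $\int|DN|^2$ over balls centred near $L$ of radius $\rho$ is comparable to $E\,\rho^{m+2-2\delta_2}$ for every $\rho$ between $\ell(L)$ and $1$. Thus the Dirichlet energy of $N$ grows in a controlled, non--degenerate way across the \emph{fixed} range of scales $[\ell(L),r_L]$ (recall $r_L=M_0\sqrt m\,\ell(L)$ with $M_0$ a fixed constant); after rescaling by $\ell(L)$ and invoking the harmonic approximation of Theorem~\ref{t:harmonic_final}, this forces a fixed fraction of $\int_{\B_L\cap\cM}|DN|^2$ to survive on $\Omega=\Phii(B_{\ell(L)/4}(q,\pi_0))$ for every admissible $q$, i.e. $\int_\Omega|DN|^2\ge c\int_{\B_L\cap\cM}|DN|^2$. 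I expect this localisation — ruling out that the energy concentrates near the boundary of the large region, using only the Whitney--decomposition bounds since the frequency monotonicity of the later sections is not yet available — to be the main obstacle.

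Next I would establish the comparison $c\,\ell(L)^m\bE(T,\B_L)\le\int_{\B_L\cap\cM}|DN|^2\le C\,\ell(L)^m\bE(T,\B_L)$ up to absorbable errors. The height bound \eqref{e:pre_height} confines $\supp(T)$ near $\B_L$ to a thin tube over $\pi_L$, so $\bE(T,\B_L)$ is comparable to the cylindrical excess $\bE(T,\bC_{c r_L}(p_L,\pi_L))$; by the mass Taylor expansion (Proposition~\ref{c:taylor_area} and \eqref{e:main(iii)} of Theorem~\ref{t:approx}) the latter equals, modulo higher order errors, $(2\omega_m r_L^m)^{-1}\int|Df_L|^2$ for the Lipschitz approximation $f_L$ on $\pi_L$. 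Transferring from $f_L$ to $N$ rests on two facts: the reparametrisation from graphs over $\pi_L$ to normal fields over $\cM$ is $C^1$--close to the identity because $\|\phii\|_{C^2}\lesssim E^{1/2}$, hence distorts the energy only by $1+o(1)$; and subtracting the regularised average $h_L$ — which by construction captures precisely the common, low--frequency motion of the $Q$ sheets — does not kill the energy. This last point is the heart of ``splitting before tilting'': since $\pi_L$ is the \emph{excess--optimal} plane in $\B_L$, the $Q$ sheets cannot move in unison (a tilted competitor would strictly lower the excess), so $\int|D(f_L-h_L)|^2\ge c\int|Df_L|^2$ and therefore $\int_{\B_L\cap\cM}|DN|^2\ge c\int|Df_L|^2\ge c'\ell(L)^m\bE(T,\B_L)$. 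All error terms from Theorem~\ref{t:approx} scale with the \emph{local} excess, which by the previous step is $\lesssim E\,\ell(L)^{2-2\delta_2}$, while the center manifold's own curvature contributes $\lesssim E\,r_L^{m+2}\sim M_0^{m+2}E\,\ell(L)^{m+2}$; both are dominated by $\ell(L)^m\bE(T,\B_L)\gtrsim E\,\ell(L)^{m+2-2\delta_2}$ once $\eps_2$ is small and $N_0$ is large relative to $M_0$, which is exactly the role of \eqref{e:N0}. Combining this lower bound with the localisation of the previous paragraph gives the second inequality of \eqref{e:split_1} (and the upper bound reproves the first inequality of \eqref{e:split_2}).

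Finally, for the last inequality of \eqref{e:split_2} I would invoke a reverse Poincar\'e (Caccioppoli) estimate for $N$: by Theorem~\ref{t:harmonic_final}, after rescaling to unit scale $N$ is $L^2$--close to a $\D$--minimizing $w$, and $\D$--minimizers satisfy $\int_{B_\rho}|Dw|^2\le C\rho^{-2}\int_{B_\rho}\cG\big(w,Q\a{\etaa\circ w}\big)^2$ plus a term controlled by $D(\etaa\circ w)$, while $\cG(w,Q\a{\etaa\circ w})^2\le\cG(w,Q\a{0})^2=|w|^2$; the extra average term is of higher order because the center manifold is centred, which is quantified by \eqref{e:av_region}. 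This yields $\int_\Omega|DN|^2\le C\,\ell(L)^{-2}\int_\Omega|N|^2$ up to absorbable errors, and chaining it with $\ell(L)^m\bE(T,\B_L)\le C\int_\Omega|DN|^2$ gives $\ell(L)^m\bE(T,\B_L)\le C\,\ell(L)^{-2}\int_\Omega|N|^2$. The precise statements and the bookkeeping of the error terms in the excess--Dirichlet comparison, in the reverse Poincar\'e inequality, and in the localisation step are carried out in \cite{DS4}.
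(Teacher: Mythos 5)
Your handling of the two cheap inequalities (the first of \eqref{e:split_1} is literally the (EX) criterion, and the first of \eqref{e:split_2} follows from \eqref{e:Dir_regional} together with (EX)), as well as the two--sided pinning $C_e\,E\,\ell(L)^{2-2\delta_2}<\bE(T,\B_L)\le C\,E\,\ell(L)^{2-2\delta_2}$ coming from the fact that the father of $L$ was not stopped, is correct and coincides with the opening of the argument in \cite{DS4} (note that these notes do not prove the proposition at all: it is quoted from \cite{DS4}). The substantive part of your plan, however, has two genuine gaps. First, the inequality you call the heart of the matter, $\int|D(f_L-h_L)|^2\ge c\int|Df_L|^2$, cannot be deduced from the excess--optimality of $\pi_L$: if the $Q$ sheets move exactly in unison along a \emph{curved} minimal graph, $\pi_L$ is still the optimal plane (the excess of such a configuration is positive with respect to every plane), yet $f_L$ essentially coincides with its average $h_L$ and the inequality fails. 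What excludes this scenario is not a ``tilted competitor'' but the quantitative failure of excess decay encoded in (EX): if the average--free part were small, the current would be close to $Q$ copies of a regular graph, and the harmonic approximation of Theorem~\ref{t:harmonic_final}, the choice $2-2\delta_2<2$ and $C_e$ large would force $\bE(T,\B_L)$ to be much smaller than $C_e\,E\,\ell(L)^{2-2\delta_2}$, contradicting (EX). In \cite{DS4} this is implemented by comparing the excess in $\B_L$ with the plane given by the linearization of the harmonic average $\etaa\circ w$ of the Dir-minimizing approximation $w$, which yields a lower bound of order $E\,\ell(L)^{m+2-2\delta_2}$ for the energy of $\sum_i\a{w_i-\etaa\circ w}$ after absorbing the $\bar\eta$-- and $E^{1+\gamma_1}$--errors; this step is absent from your argument.

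Second, the localization to the arbitrary small ball $\Omega=\Phii(B_{\ell(L)/4}(q,\pi_0))$ --- which you yourself flag as ``the main obstacle'' and for which you give no argument --- is a genuinely missing ingredient, not a technicality: the lower bound produced by the previous step lives at scale $r_L=M_0\sqrt m\,\ell(L)\gg\ell(L)/4$, and a Dir-minimizer can concentrate its energy away from a prescribed interior ball (think of $\mathrm{Re}(z^k)$), so the harmonic approximation alone does not ``force'' $\int_\Omega|DN|^2\ge c\int_{\B_L\cap\cM}|DN|^2$. In \cite{DS4} this is exactly the role of a quantitative unique--continuation lemma for Dir-minimizing $Q$-valued maps (if $\D(w,B_r)\ge c\,\D(w,B_{2r})$, then every interior ball of radius comparable to $r$ carries a fixed fraction of the energy), proved by a compactness argument; some statement of this kind must be supplied. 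Relatedly, the same--ball reverse Poincar\'e you invoke for the last inequality of \eqref{e:split_2}, namely $\int_{B_\rho}|Dw|^2\le C\rho^{-2}\int_{B_\rho}\cG(w,Q\a{\etaa\circ w})^2$, is false even for harmonic functions (again $\mathrm{Re}(z^k)$); one needs to enlarge the ball on the right-hand side, which is harmless here because $q$ ranges over a whole $4\sqrt m\,\ell(L)$-neighborhood of $L$ and because $\sum_i\a{w_i-\etaa\circ w}$ is itself Dir-minimizing, so a Caccioppoli inequality from $B_{\ell(L)/8}(q)$ to $B_{\ell(L)/4}(q)$ does the job --- but as written that step is incorrect. (Minor: the claimed comparability of $\int|DN|^2$ with $E\rho^{m+2-2\delta_2}$ at \emph{all} scales $\rho\in[\ell(L),1]$ is also overstated; the ancestors only give upper bounds.)
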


\medskip

Both propositions above are a typical \textit{splitting-before-tilting} phenomenon in
this sense:
the key assumption is that the excess has decayed up to a given scale (i.e.~no ``tilting'' occurs), while the conclusion is that 
a certain amount of separation between the sheets of the current (``splitting'') holds.
We borrowed this terminology from the paper by T.~Rivi{\`e}re \cite{Ri04},
where a similar phenomenon (but not completely the same) was proved
for semi-calibrated two dimensional currents as a consequence of a lower
epi-perimetric inequality.

\subsection{Intervals of flattening}
Here we define the last feature of the construction of the center manifold,
namely the so called interval of flattening.
A center manifold constitutes a good approximation of the average
of the sheets of a current as soon as the errors in Theorem~\ref{t:approxN}
are small compared to the distance from the origin.
In this case, we are forced to interrupt our blowup analysis and to
start a new center manifold. This procedure is explained in details
in the following paragraph.

\subsubsection{Defining procedure}\label{ss:flattening}
We fix the constant $c_s := \frac{1}{64\sqrt{m}}$ and notice that
$2^{-N_0} < c_s$. We set
\begin{equation}\label{e:def_R}
\mathcal{R}:=\big\{r\in ]0,1]: \bE (T, \B_{6\sqrt{m} r}) \leq \eps_3^2\big\},
\end{equation}
where $\eps_3>0$ is a suitably chosen constant, always assumed to be
smaller than $\eps_2$.
Observe that, if $(s_k)\subset \mathcal{R}$
and $s_k\uparrow s$, then $s\in \mathcal{R}$.
We cover $\cR$ with a collection $\mathcal{F}=\{I_j\}_j$ of intervals
$I_j = ]s_j, t_j]$ defined as follows: we start with 
\[
t_0:= \max \{t: t\in \mathcal{R}\}.
\]
Next assume, by induction, to have defined
\[
t_0 > s_0\geq t_1 > s_1 \geq \ldots > s_{j-1}\geq t_j,
\]
and consider the following objects:
\begin{itemize}
\item[-] $T_j := ((\iota_{0,t_{j}})_\sharp T)\res \B_{6\sqrt{m}}$, and
assume (without loss of generality, up to a rotation)
that $\bE (T_j, \B_{6\sqrt{m}}, \pi_0) = \bE(T_j,\B_{6\sqrt{m}})$;
\item[-] let $\cM_j$ the corresponding center manifold for $T_j$, given as
the graph of a map $\phii_j: \pi_0 \supset [-4,4]^m \to \pi_0^\perp$,
(for later purposes we set $\Phii_j (x) := (x, \phii_j (x))$).
\end{itemize}
Then, one of the following possibilities occurs:
\begin{itemize}
\item[(Stop)] either there is $r \in ]0, 3]$ and a cube $L$ of the
Whitney decomposition $\sW^{(j)}$ of $[-4,4]^m \subset \pi_0$ (applied to $T_j$)
such that
\begin{equation}\label{e:st}
\ell(L)\geq c_s\,r\, \qquad \mbox{and} \qquad L\cap \bar B_r (0, \pi_0)\neq \emptyset; 
\end{equation}

\item[(Go)] or there exists no radius as in (Stop). 
\end{itemize}
It is possible to show that
when (Stop) occurs for some $r$, such $r$ is smaller than $2^{-5}$.
This justifies the following:
\begin{itemize}
\item[(1)]  in case (Go) holds, we set $s_j:=0$, i.e.~$I_{j} := ]0, t_{j}]$, and end
the procedure;

\item[(2)]  in case (Stop) holds we let $s_j:= \bar r\,t_j$, where $\bar r$ is
the maximum radius satisfying (Stop). We choose then $t_{j+1}$ as the largest element
in $\mathcal{R}\cap ]0, s_j]$ and proceed iteratively.
\end{itemize}

\medskip

The following are easy consequences of the definition: for all $r\in]\frac{s_j}{t_j},3[$, it holds
\begin{gather}
\bE (T_j, \B_r )\leq C \eps_3^2 \, r^{2-2\delta_2},\\
\sup \{ \textup{dist} (x,\cM_j): x\in \supp(T_j) \cap \p^{-1}_j(\cB_r(p_j))\} \leq C\, (E^j)^\frac{1}{2m} r^{1+\beta_2},
\end{gather}
where $E^j := \bE(T_j, \B_{6\sqrt{m}})$ and $\p_j$ denotes the nearest point
projection on $\cM_j$ defined on a neighborhood of the center manifold
(for the proof we refer to \cite{DS5}).

\subsection{Families of subregions}\label{s:regions}
Let $\cM$ be a center manifold and $\Phii:\pi_0 \to \R^{m+n}$
the paremetrizing map. Set $q:= \Phii (0)$ and denote by $B$
the projection of the geodesic ball $\p_{\pi_0} (\cB_r (q))$, for some
$r\in (0,4)$.
Since $\|\phii\|_{C^{3,\kappa}}\leq C \eps_2^{1/m}$ in Theorem~\ref{t:cm},
it is simple to show that $B$ is a $C^2$ convex set and that the maximal curvature
of $\partial B$ is everywhere smaller than $\frac{2}{r}$. Thus,
for every $z\in \de B$ there is a ball $B_{r/2} (y)\subset B$ whose closure touches
$\partial B$ at $z$.

\medskip

In this section we show how one can partition the cubes of the
Whitney decomposition which intersect $B$ into disjoint families
which are labeled by pairs $(L,B(L))$ cube--ball enjoying different properties.

\begin{proposition}\label{p:covering}
There exists a set $\mathscr{Z}$ of pairs $(L,B(L))$ with this properties:
\begin{enumerate}
\item[(i)] if $(L, B(L))\in \mathscr{Z}$, then $L\in \mathscr{W}_e \cup \mathscr{W}_h$,
the radius of $B(L)$ is $\frac{\ell (L)}{4}$, $B(L)\subset B$
and ${\rm dist}\, (B(L), \partial B)\geq \frac{\ell (L)}{4}$;

\item[(ii)] if the pairs $(L, B(L)), (L', B(L'))\in \mathscr{Z}$ are distinct, then
$L$ and $L'$ are distinct and $B (L)\cap B (L') =\emptyset$;

\item[(iii)] the cubes $\mathscr{W}$ which intersect $B$
are partitioned into disjoint families $\mathscr{W} (L)$ labeled by
$(L,B(L))\in \mathscr{Z}$ such that,
if $H\in \mathscr{W} (L)$, then $H\subset B_{30 \sqrt{m} \ell (L)} (x_L)$. 
\end{enumerate}
\end{proposition}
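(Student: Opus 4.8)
The plan is to produce $\mathscr{Z}$ by a Vitali-type selection of ``anchor'' cubes lying in $\sW_e\cup\sW_h$, paired with an explicit placement of the balls $B(L)$ that uses the interior-ball property of $B$ recalled just before the statement. Let $\mathscr{G}:=\{L\in\sW: L\cap B\neq\emptyset\}$ be the family to be partitioned. To each $L\in\mathscr{G}$ I attach an anchor $A(L)\in\sW_e\cup\sW_h$: set $A(L):=L$ if $L\in\sW_e\cup\sW_h$; if $L\in\sW_n$, Corollary~\ref{c:domains} gives a chain $A(L)=L_0\in\sW_e,L_1,\dots,L_k=L$ of consecutively intersecting cubes with $\ell(L_i)=\tfrac12\ell(L_{i-1})$ and, crucially, $L\subset B_{3\sqrt m\,\ell(A(L))}(x_{A(L)})$. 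In both cases $\ell(A(L))\geq\ell(L)$, $L\subset B_{3\sqrt m\,\ell(A(L))}(x_{A(L)})$, and hence $B_{3\sqrt m\,\ell(A(L))}(x_{A(L)})$ meets $B$. Here I would also record the a priori bound that every such cube, and its anchor, has side-length at most a small dimensional multiple of $r$: since $0\in\bGam$ is a contact point, $B\subset B_r(0,\pi_0)$ (because $\|\phii\|_{C^1}$ is small), and $\dist(\bGam,L')\geq2\ell(L')$ for every Whitney cube, any cube meeting $B$ satisfies $2\ell(L)\le\dist(0,L)<r$, and the bound is propagated to the anchors through the chains of Corollary~\ref{c:domains}. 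This is what makes it possible to fit a ball of radius $\ell(L)/4$ inside $B$.

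\emph{Selection and covering.} Let $\mathscr{A}:=\{A(L):L\in\mathscr{G}\}$ and run the greedy Vitali algorithm on $\{B_{\bar c\sqrt m\,\ell(J)}(x_J):J\in\mathscr{A}\}$, with $\bar c$ a dimensional constant to be fixed: order $\mathscr{A}$ by non-increasing side-length and keep $J$ exactly when $B_{\bar c\sqrt m\,\ell(J)}(x_J)$ misses every ball already kept. This yields $\mathscr{Z}_0\subset\mathscr{A}\subset\sW_e\cup\sW_h$ with the selected balls pairwise disjoint, and with every $J\in\mathscr{A}$ lying within distance $2\bar c\sqrt m\,\ell(J')$ of some $J'\in\mathscr{Z}_0$ with $\ell(J')\ge\ell(J)$. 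Together with the previous step this gives $L\subset B_{(3+4\bar c)\sqrt m\,\ell(J')}(x_{J'})$ for a suitable $J'\in\mathscr{Z}_0$, which is contained in $B_{30\sqrt m\,\ell(J')}(x_{J'})$ once $\bar c$ is chosen so that $3+4\bar c\le30$; this is the inclusion in (iii). I then assign $L$ to the family $\sW(J')$ for one such $J'$ (breaking ties by a fixed rule), thereby partitioning $\mathscr{G}$ into the families $\{\sW(J')\}$; after discarding the cubes with empty family one obtains $\mathscr{Z}$, and distinctness of the labeling cubes in (ii) is then automatic.

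\emph{Placement of the balls $B(L)$.} For each $J\in\mathscr{Z}$ I build $B(J)$, of radius $\ell(J)/4$. If $B_{\bar c\sqrt m\,\ell(J)}(x_J)\subset B$, take $B(J):=B_{\ell(J)/4}(x_J)$: then $B(J)\subset B$ and $\dist(B(J),\partial B)\geq(\bar c\sqrt m-\tfrac14)\ell(J)\geq\tfrac14\ell(J)$. Otherwise $\dist(x_J,\partial B)<\bar c\sqrt m\,\ell(J)$; choose $z\in\partial B$ at this distance, use the interior-ball property to get $B_{r/2}(y)\subset B$ whose closure touches $\partial B$ only at $z$, and set $B(J):=B_{\ell(J)/4}\big(z-\tfrac{\ell(J)}{2}\,\nu\big)$, with $\nu$ the outward normal at $z$. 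Using $\ell(J)$ small relative to $r$ one checks $B(J)\subset B_{r/2-\ell(J)/4}(y)\subset B$ and $\dist(B(J),\partial B)\geq\tfrac14\ell(J)$ — indeed $\dist(B(J),z)=\tfrac14\ell(J)$, while $\partial B\setminus\{z\}$ stays outside $\overline{B_{r/2}(y)}$. In either case $B(J)\subset B_{\bar c\sqrt m\,\ell(J)}(x_J)$, provided $\bar c$ absorbs the extra displacement $\tfrac34\ell(J)$ of the second case, so the disjointness of the selected balls forces the $B(J)$ to be mutually disjoint; this gives (i) and (ii), and a single dimensional choice of $\bar c$ reconciles the constraints $3+4\bar c\le30$ and ``$\bar c$ large enough to contain the pushed-in ball''.

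\emph{Main obstacle.} The delicate point is precisely this simultaneous bookkeeping: the balls $B(L)$ must be pairwise disjoint, contained in $B$, and $\tfrac14\ell(L)$-away from $\partial B$ — which near $\partial B$ forces the push-in construction — while the dilated cubes $B_{30\sqrt m\,\ell(L)}(x_L)$ of the \emph{selected} anchors must still cover \emph{every} cube of $\sW$ meeting $B$, including those $\sW_n$-cubes reached only through long chains; choosing the single parameter $\bar c$ so that the Vitali dilation, the push-in displacement, and the target factor $30\sqrt m$ are all mutually compatible is where the argument has to be run with care. A subsidiary technical point is the a priori estimate $\ell(L)\lesssim r$ for the relevant cubes, which rests on the origin being a contact point and on the separation $\dist(\bGam,L)\ge2\ell(L)$ built into the Whitney decomposition.
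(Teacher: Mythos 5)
Your construction follows essentially the same route as the paper's: you attach to every cube of $\sW$ meeting $B$ an anchor in $\sW_e\cup\sW_h$ via Corollary~\ref{c:domains} (this is the paper's family $\mathcal{T}$), you run a Vitali-type disjoint selection among balls centered at the anchors, you place the balls $B(L)$ using the interior-ball property of $B$, and you close the constants to reach the factor $30\sqrt m$. The differences are cosmetic: the paper selects among the adapted balls $B^L$ of radius $s(L)=r(L)+\ell(L)$ (so it only obtains a selected ball with $s(L)\ge s(J)/2$, converted into $\ell(J)\le 8\sqrt m\,\ell(L)$), and it finds $B(L)$ inside $B^L\cap B$ by the convexity/curvature bound on $\partial B$ rather than by your explicit push-in along the inward normal into the tangent ball $B_{r/2}(y)$; your variant of the selection (monotone in $\ell$) and of the placement are both legitimate, and the bookkeeping with a single constant $\bar c$ can indeed be closed.

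The one genuine flaw is your justification of the a priori bound $\ell(L)\lesssim r$, which your push-in placement needs. You derive it from the claim that $0\in\bGam$, combined with the separation ${\rm dist}\,(\bGam,L)\ge 2\ell(L)$. But nothing guarantees that the origin lies in $\bGam$: the criteria (EX), (HT), (NN) may stop the refinement at a cube containing, or adjacent to, the origin even though $0$ is a density-$Q$ point — this is exactly the situation detected by the condition (Stop), and it is the reason the intervals of flattening exist at all, so membership of $0$ in the contact set cannot be assumed. Moreover, even granting $0\in\bGam$, the propagation of the bound to the anchors through the chains of Corollary~\ref{c:domains} does not work as sketched: for an anchor $J$ of a cube meeting $B$ one only gets $2\ell(J)\le {\rm dist}\,(0,J)\le r+3\sqrt m\,\ell(J)$, which is vacuous because $3\sqrt m>2$. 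The correct source of the smallness is the flattening procedure itself: since $r$ belongs to an interval of flattening, (Stop) fails for every radius in $\left]\frac{s_j}{t_j},3\right]$, and this gives $\ell(L)\le 3c_s r\le r$ (with $c_s=\frac{1}{64\sqrt m}$) for every $L\in\mathcal{T}$, i.e.\ for the cubes meeting $B$ \emph{and} for their anchors — which is precisely what the paper records before Definition~\ref{d:balls}. With this replacement, the rest of your argument stands.
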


In this way, every cube of the Whitney decomposition intersecting $B$
can be uniquely associated to a ball $B(L) \subset B$ for some
$L \in \sW_e \cap \sW_h$. This
will allow to transfer the estimates form the cubes of the Whitney decomposition
to the ball $B$.

\subsubsection{Proof of Proposition \ref{p:covering}}
We start defining appropriate families of cubes and balls.

\begin{definition}[Family of cubes]\label{d:cubes}
We first define a family $\mathcal{T}$ of cubes in the Whitney decomposition 
$\mathscr{W}$ as follows:
\begin{itemize}
\item[(i)] $\mathcal{T}$ includes all $L\in \mathscr{W}_h \cup \mathscr{W}_e$ which intersect $B$;
\item[(ii)] if $L'\in \mathscr{W}_n$ intersects $B$ and
belongs to the domain of influence
$\mathscr{W}_n (L)$ of the cube $L\in \mathscr{W}_e$ as in Definition~\ref{d:domains},
then $L\in \mathcal{T}$.
\end{itemize}
\end{definition}

It is easy to see that, if $r$ belongs to an interval of flattening, then
for every $L\in \mathcal{T}$ it holds that
$\ell (L)\leq 3 c_s r \leq r$ and ${\rm dist} (L, B) \leq 3\sqrt{m} \,\ell (L)$.
Therefore, we can also define the following associated balls.

\begin{definition}\label{d:balls}
For every $L \in \mathcal{T}$, let $x_L$ be the center of $L$ and:
\begin{itemize}
\item[(a)] if $x_L \in \overline{B}$, we then set $s(L):=\ell(L)$ and
$B^L := B_{s(L)} (x_L, \pi)$;
\item[(b)] otherwise we consider the ball $B_{r(L)} (x_L, \pi)\subset \pi$ such that its
closure
touches $\overline{B}$ at exactly one point $p(L)$, we set $s(L):= r(L) + \ell (L)$
and define $B^L:= B_{s(L)} (x_L, \pi)$.
\end{itemize}
\end{definition}
We proceed to select a countable family
$\mathscr{T}$ of pairwise disjoint balls $\{B^L\}$. We
let $S:= \sup_{L\in \mathcal{T}} s(L)$ and start selecting
a maximal subcollection $\mathscr{T}_1$ 
of pairwise disjoint balls with radii larger than $S/2$.
Clearly, $\mathscr{T}_1$ is finite. In general, at the stage $k$,
we select a maximal subcollection $\mathscr{T}_k$ of pairwise disjoint balls
which do not intersect any of the previously selected balls in
$\mathscr{T}_1 \cup \ldots \cup \mathscr{T}_{k-1}$ and which
have radii $r\in ]2^{-k} S, 2^{1-k}S]$.
Finally, we set $\mathscr{T} := \bigcup_k \mathscr{T}_k$. 

\begin{definition}[Family of pairs cube-balls $(L, B(L))\in \mathscr{Z}$]\label{d:coppie}
Recalling the convexity properties of $B$ and $\ell (L)\leq r$,
it easy to see that there exist balls
$B_{\ell (L)/4} (q_L, \pi) \subset B^L\cap B$ which lie at distance
at least $\ell (L)/4$ from $\partial B$. We denote by $B(L)$ one of such balls and 
by $\mathscr{Z}$ the collection of pairs $(L, B(L))$ with $B^L\in \mathscr{T}$.
\end{definition}

Next, we partition the cubes of $\mathscr{W}$ which intersect $B$
into disjoint families $\mathscr{W} (L)$ labeled by $(L,B(L))\in \mathscr{Z}$
in the following way.
Let $H\in \mathscr{W}$ have nonempty intersection with $B$.
Then, either $H$ is in $\mathcal{T}$ and we set $J:= H$, or is in the domain of influence of
some $J\in \mathcal{T}$.
If $J\neq H$, then the separation between $J$ and $H$ is at most
$3\sqrt{m} \ell (J)$ and, hence, $H\subset B_{4\sqrt{m} \ell (J)} (x_J)$.
By construction there is a $B^L \in \mathscr{T}$ with $B^J\cap B^L\neq \emptyset$
and radius $s(L)\geq \frac{s(J)}{2}$. We then prescribe $H\in \mathscr{W} (L)$.
Observe that
\[
s(L)\leq 4\sqrt{m}\, \ell (L) \quad \text{and}\quad s(J) \geq \ell (J).
\]
Therefore, it also holds
\[
\ell (J) \leq 8\sqrt{m}\, \ell (L) \quad \text{and}\quad |x_J - x_L|\leq 5 s(L) \leq 20 \sqrt{m} \,\ell (L),
\]
thus implying
\[
H\subset B_{4\sqrt{m} \,\ell (J)} (x_J) \subset B_{4\sqrt{m} \ell (J) + 20 \sqrt{m}\, \ell (L)} (x_L)
\subset B_{30 \sqrt{m}\, \ell (L)} (x_L)\, .
\]

%
%
\section{Order of contact}
In this section we discuss the issues in steps (D) and (E) of the sketch of proof
in \S~\ref{ss:sketch},
i.e.~the order of contact of the normal approximation with the center manifold.

The key word for this part is \textit{frequency function}, 
which is the monotone quantity discovered by Almgren controlling the 
vanishing order of a harmonic function.
In order to explain this point, we consider first the case of a
real valued harmonic function $f:B_1\subset\R^2 \to \R$ with an expansion
in polar coordinates
\[
f(r, \theta) = a_0+\sum_{k=1}^\infty r^k\big(a_k \, \cos(k\theta) + b_k\, \sin(k\theta) \big). 
\]
How can one detect the smallest index $k$ such that $a_k$ or $b_k$ is not $0$?
It is not difficult to show that the quantity 
\begin{equation}\label{e:If}
I_f(r) := \frac{r\int_{B_r}|\nabla f|^2}{\int_{\de B_r}|f|^2}
\end{equation}
is monotone increasing in $r$ and its limit as $r\downarrow 0$ gives exactly the smallest
non-zero index in the expansion above.

$I_f$ is what Almgren calls the frequency function (and the reason for such terminology
is now apparent from the example above),
and one of the most striking discoveries of Almgren is that the monotonicity of the frequency
remains true for $Q$-valued functions and in fact allows to obtain a non-trivial blowup limit.

\medskip

In the next subsections, we see how this discussion generalizes to the case of area
minimizing currents, where an \textit{almost monotonicity} formula can be derived
for a suitable frequency defined for the $\cM$-normal approximation.

\subsection{Frequency function's estimate}
For every interval of flattening $I_j = ]s_j, t_j]$,
let $N_j$ be the normal approximation of $T_j$
on $\cM_j$.
Since the $L^2$ norm of the trace of $N_j$ may not have any connection to
the current itself (remember that $N_j$ misses a set of positive
measure of $T_j$), we need to introduce an
averaged version of the frequency function.
To this aim, consider the following piecewise linear
function $\phi:[0+\infty[ \to [0,1]$ given by
\begin{equation*}
\phi (r) :=
\begin{cases}
1 & \text{for }\, r\in [0,\textstyle{\frac{1}{2}}],\\
2-2r & \text{for }\, r\in \,\, ]\textstyle{\frac{1}{2}},1],\\
0 & \text{for }\, r\in \,\, ]1,+\infty[,
\end{cases}
\end{equation*}
and let us define a new frequency function in the following way.

\begin{definition}\label{d:frequency}
For every $r\in ]0,3]$ we define: 
\[
\bD_j (r) := \int_{\cM^j} \phi\left(\frac{d_j(p)}{r}
\right)\,|D N_j|^2(p)\, dp,
\]
and
\[
\bH_j (r) := - \int_{\cM^j} \phi'\left(\frac{d_j (p)}{r}\right)\,\frac{|N_j|^2(p)}{d(p)}\, dp\, ,
\]
where $d_j (p)$ is the geodesic distance on $\cM_j$ between $p$ and $\Phii_j (0)$.
If we have that $\bH_j (r) > 0$, then we define the {\em frequency function}
\[
\bI_j (r) := \frac{r\,\bD_j (r)}{\bH_j (r)}.
\]
\end{definition}
Note that, by the Coarea formula,
\begin{align}\label{e:Hmedia}
\bH_j (r) & = 2\,\int_{\cB_{r} \setminus \cB_{r /2} (\Phi_j (0))}  \frac{|N|^2}{d(p)} \notag\\
&= 2 \int_{r/2}^{r} \frac{1}{t} \int_{\partial \cB_t (\Phi_j (0))} |N_j|^2\, dt\, ,
\end{align}
whereas, using Fubini,
\begin{align}\label{e:Dmedia}
r\,\bD_j (r) & = \int_{\cM_j} |DN_j|^2 (x) \int_{\frac{r}{2}}^{r} {\bf 1}_{]|x|, \infty[} (t)\, dt\, d\cH^m (x)\notag\\
&= 2 \int_{\frac{r}{2}}^{r}\int_{\cB_t(\Phii_j(0))} |D{N}_j|^2\, dt.
\end{align}

This explains in which sense $\bI_j$ is an average of the quantity
introduced by F.~Almgren.

The main analytical estimate is then the following.

\begin{theorem}\label{t:frequency}
If $\eps_3$ in \eqref{e:def_R} is sufficiently small, then
there exists a constant $C>0$ (indepent of $j$) such that, if
$[a,b]\subset [\frac{s}{t}, 3]$ and $\bH_j \vert_{[a,b]} >0$, then it holds
\begin{equation}\label{e:frequency}
\bI_j (a) \leq C(1 + \bI_j (b)).
\end{equation}
\end{theorem}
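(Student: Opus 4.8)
\emph{Strategy.} The plan is to carry Almgren's proof of the almost-monotonicity of the frequency over to the $\cM_j$-normal approximation $N_j$. Two genuinely new difficulties separate this from the model case of a $\D$-minimizing $Q$-valued function: $N_j$ is only \emph{almost} stationary for the Dirichlet energy, because it is $T_j$ and not the graph $\bT_{F_j}$ that is area minimizing, the two differing on a set controlled by \eqref{e:err_regional}; and $N_j$ lives on the curved center manifold $\cM_j$, whose second fundamental form has size $\|\phii_j\|_{C^{3,\kappa}}\le C\eps_2^{1/m}$ by Theorem~\ref{t:cm}. I would first introduce, beside $\bD_j$ and $\bH_j$ of Definition~\ref{d:frequency}, the two companion quantities
\[
\EEE_j(r):=2\int_{r/2}^{r}\frac1t\int_{\de\cB_t(\Phii_j(0))}\sum_{i=1}^Q\la N_{j,i},\,\de_{\nu_j}N_{j,i}\ra\,d\cH^{m-1}\,dt,
\]
\[
\GGG_j(r):=2\int_{r/2}^{r}\frac1t\int_{\de\cB_t(\Phii_j(0))}\sum_{i=1}^Q\big|\de_{\nu_j}N_{j,i}\big|^2\,d\cH^{m-1}\,dt,
\]
where $\cB_t$ is the geodesic ball in $\cM_j$ centered at $\Phii_j(0)$, $\nu_j$ its outer unit conormal and $\de_{\nu_j}$ the corresponding directional derivative; the pointwise Cauchy--Schwarz inequality then gives at once $\EEE_j(r)^2\le\bH_j(r)\,\GGG_j(r)$. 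The skeleton of the argument is a triple of first-variation identities, each valid modulo an error term: an \emph{outer variation} identity $\bD_j(r)=\EEE_j(r)+\Sigma^{\mathrm{o}}_j(r)$, obtained by testing the stationarity of $T_j$ with the deformation that multiplies each sheet $N_{j,i}$ by $1+\ph(d_j/r)$ and using the area expansion of Proposition~\ref{c:taylor_area} together with Theorem~\ref{t:approxN}; an \emph{inner variation} (Rellich--Pohozaev) identity $\bD_j'(r)=\tfrac{m-2}{r}\bD_j(r)+2\GGG_j(r)+\Sigma^{\mathrm{i}}_j(r)$, obtained from a radial reparametrization of $\cM_j$, which is also where the curvature of $\cM_j$ enters; and a \emph{height-derivative} identity $\bH_j'(r)=\tfrac{m-1}{r}\bH_j(r)+2\EEE_j(r)+\Sigma^{\mathrm{h}}_j(r)$, obtained by differentiating \eqref{e:Hmedia}.

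\emph{Controlling the errors.} The technical heart is then to show that $\Sigma^{\mathrm{o}}_j$, $\Sigma^{\mathrm{i}}_j$, $\Sigma^{\mathrm{h}}_j$ and the defect $\bD_j-\EEE_j$ are absorbable, i.e.\ bounded by a small multiple of $\bD_j$, of $r^{-1}\bH_j$, and of integrable powers of $r$. Each error is a sum of contributions supported on the Whitney regions $\cL$ that meet $\cB_r(\Phii_j(0))$, and on each such region one disposes of the \emph{superlinear} estimates of Theorem~\ref{t:approxN}: the Lipschitz and Dirichlet bounds \eqref{e:Lip_regional}--\eqref{e:Dir_regional}, the mass-deficit bound \eqref{e:err_regional} for $\|\bT_{F_j}-T_j\|$, and the centering bound \eqref{e:av_region} for $\etaa\circ N_j$ — all of which carry an extra factor $E^{\gamma_2}\ell(L)^{\gamma_2}$ or $\ell(L)^{\beta_2}$ relative to the natural scaling. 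To sum these cube-by-cube I would use the covering of Proposition~\ref{p:covering}, which organizes the cubes meeting $\cB_r$ into families $\sW(L)$ indexed by the pairs $(L,B(L))\in\mathscr{Z}$, together with the splitting-before-tilting estimates: \eqref{e:split_1}--\eqref{e:split_2} for $L\in\sW_e$ and \eqref{e:s3} for $L\in\sW_h$ convert, on every stopped cube $L$, the error budget $E\,\ell(L)^{m}$ into a definite fraction of the local Dirichlet energy or $L^2$-norm of $N_j$ on the associated subregion $\Omega\subset\cM_j$. The resulting sum converges because, for $r\in[\tfrac{s_j}{t_j},3]$ and $T_j$ as in the construction of the intervals of flattening, every cube of $\sW^{(j)}$ has side-length $\le 3c_s r\le r$, and one obtains
\[
|\Sigma^{\mathrm{o}}_j(r)|+|\Sigma^{\mathrm{i}}_j(r)|+|\Sigma^{\mathrm{h}}_j(r)|+|\bD_j(r)-\EEE_j(r)|\le C\,r^{\gamma}\big(\bD_j(r)+r^{-1}\bH_j(r)\big)
\]
for a dimensional $\gamma>0$, provided $\eps_3$ in \eqref{e:def_R} is chosen small enough.

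\emph{Conclusion, and the main obstacle.} Feeding these bounds into the logarithmic derivative of $\bI_j=r\bD_j/\bH_j$ and using $\EEE_j^2\le\bH_j\GGG_j$, the leading-order terms cancel and one is left, on the part of $[\tfrac{s_j}{t_j},3]$ where $\bH_j>0$ and $\bI_j$ exceeds a fixed dimensional threshold, with a differential inequality of almost-monotone type,
\[
\frac{d}{dr}\log\bI_j(r)\ \ge\ -C\,\big(1+\bI_j(r)\big)\,r^{\gamma-1};
\]
on the complementary scales $\bI_j$ is bounded a priori by that threshold. A routine integration over $[a,b]$, treating the two regimes separately and using $\int_{0}^{3}r^{\gamma-1}\,dr<\infty$, then produces $\bI_j(a)\le C(1+\bI_j(b))$, which is the assertion; the additive $1$ is exactly what is needed to absorb the scales on which the Dirichlet energy is itself of the size of the errors. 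The step I expect to be the genuine obstacle is precisely the error estimate displayed above: turning the non-minimality of $N_j$, the cubic remainder in the area expansion, and the $L^1$-smallness of $\etaa\circ N_j$ into bounds carrying a \emph{strictly positive} power of $r$ (equivalently of $\ell(L)$) requires the full strength of the splitting-before-tilting propositions~\ref{p:separ} and \ref{p:splitting} — without that gain the errors would be merely comparable to $\bD_j$ and $r^{-1}\bH_j$, the leading cancellation would be destroyed, and the frequency estimate would fail.
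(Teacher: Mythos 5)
Your overall architecture is the same as the paper's (outer and inner variations of the minimizing current, the derivative identity for $\bH_j$, Cauchy--Schwarz between the two radial quantities, and error control by summing over Whitney regions via Proposition~\ref{p:covering} with the splitting-before-tilting bounds and the superlinear estimates of Theorem~\ref{t:approxN}). The gap is in the quantitative form of the error estimates and, consequently, in the final integration. You claim
\[
|\Sigma^{\mathrm{o}}_j|+|\Sigma^{\mathrm{i}}_j|+|\Sigma^{\mathrm{h}}_j|+|\bD_j-\EEE_j|\le C\,r^{\gamma}\big(\bD_j+r^{-1}\bH_j\big),
\]
and deduce $\frac{d}{dr}\log\bI_j\ge -C(1+\bI_j)\,r^{\gamma-1}$. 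This differential inequality does \emph{not} imply $\bI_j(a)\le C(1+\bI_j(b))$: when $\bI_j$ is large it is of Riccati type, $\bI_j'\ge -C r^{\gamma-1}\bI_j^2$, whose backward solutions blow up in finite ``time'' (compare $y'=-Cr^{\gamma-1}y^2$, for which $1/y(a)=1/y(b)-C\gamma^{-1}(b^\gamma-a^\gamma)$ can vanish for $y(b)$ of order one). So once $\bI_j(b)$ exceeds a fixed constant your inequality yields no bound on $\bI_j(a)$ at all, and the two-regime integration you describe as ``routine'' breaks down precisely where the theorem has content. The offending terms are exactly those where your error bound lets $\bH_j$ leak into the variation identities (giving $\Sigma^{\mathrm o}_j/\bD_j\sim r^{\gamma-1}\bH_j/\bD_j$ multiplied by $\EEE_j/\bH_j\sim\bI_j/r$) and lets $\bD_j$ leak into the $\bH_j'$ identity (giving $\Sigma^{\mathrm h}_j/\bH_j\sim r^{\gamma-1}\bI_j$); moreover even the benign-looking piece $r^\gamma\bD_j$ produces, after the Cauchy--Schwarz step, a term like $r^\gamma\bD_j'/\bD_j$ whose integral $\log(\bD_j(b)/\bD_j(a))$ is not universally bounded.

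What the paper actually proves (Proposition~\ref{p:variation}) is structurally stronger and is what makes the integration work: the $\bH'$ identity has error $C\bH$ only; the outer-variation defect is bounded by $C\bD^{1+\gamma_3}+C\eps_3^2\bSigma$ and the inner one by $C\bD+C\bD^{\gamma_3}\bD'+r^{-1}\bD^{1+\gamma_3}$, together with the Poincar\'e-type bound $\bSigma+r\bSigma'\le Cr^2\bD\le Cr^{m+2}\eps_3^2$ (which you never invoke, and which supplies the positive power of $r$ via the a priori bound $\bD(r)\le Cr^{m+2-2\delta_2}$ coming from the (Stop) criterion). With errors \emph{superlinear in $\bD$} and free of $\bH$, the logarithmic derivative of $\bI_j$ is bounded below by $-\big(C+Cr^{\gamma_3 m-1}+C\bD^{\gamma_3-1}\bD'+C\bSigma\bD'/\bD^2\big)$, and each term integrates to a universal constant --- $\bD^{\gamma_3-1}\bD'=\gamma_3^{-1}(\bD^{\gamma_3})'$, and the $\bSigma\bD'/\bD^2$ term by parts using $\bSigma/\bD\le Cr^2$, $\bSigma'/\bD\le Cr$ --- with no factor of $\bI_j$ anywhere. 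So the missing idea is not the use of splitting-before-tilting per se (you correctly identify it as essential) but the precise target it must deliver: the Whitney-region summation, via \eqref{e:gamma(a)}--\eqref{e:D'_globale}, must convert each error into powers $\bD^{1+\gamma_3}$, $\bD^{\gamma_3}\bD'$ or $\eps_3^2\bSigma$, not merely into $r^{\gamma}$ times the leading quantities. As written, your scheme would not close.
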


To simplify the notation, we drop the index $j$ and 
omit the measure $\cH^m$ in the integrals over regions of
$\cM$.
For the proof of the theorem we need to introduce some auxiliary functions
(all absolutely continuous with respect to $r$).
We let $\de_{\hat r}$ denote the derivative along geodesics starting at $\Phii(0)$. We set
\begin{gather*}
\bE (r) := - \int_\cM \phi'\left(\textstyle{\frac{d(p)}{r}}\right)\,\sum_{i=1}^Q \langle
N_i(p), \de_{\hat r} N_i (p)\rangle\, dp\,  ,\\
\bG (r) := - \int_{\cM} \phi'\left(\textstyle{\frac{d(p)}{r}}\right)\,d(p) \left|\de_{\hat r} N (p)\right|^2\, dp,\\
\bSigma (r) :=\int_\cM \phi\left(\textstyle{\frac{d(p)}{r}}\right)\, |N|^2(p)\, dp\, .
\end{gather*}

The proof of Theorem~\ref{t:frequency} exploits
some ``integration by parts'' formulas, which in
our setting are given by the first variations for the
minimizing current.
We collect these identities in the following proposition, and proceed then
with the proof of the theorem.

\begin{proposition}\label{p:variation}
There exist dimensional constants $C,\gamma_3>0$ such that, if
the hypotheses of Theorem~\ref{t:frequency} hold and $\bI \geq 1$, then
\begin{gather}
\left|\bH' (r) - \textstyle{\frac{m-1}{r}}\, \bH (r) - \textstyle{\frac{2}{r}}\,\bE(r)\right|\leq  C \bH (r), \label{e:H'}\\
\left|\bD (r)  - r^{-1} \bE (r)\right| \leq C \bD (r)^{1+\gamma_3} + C \eps_3^2 \,\bSigma (r),\label{e:out}\\
\left| \bD'(r) - \textstyle{\frac{m-2}{r}}\, \bD(r) - \textstyle{\frac{2}{r^2}}\,\bG (r)\right|\leq
C \bD (r) + C \bD (r)^{\gamma_3} \bD' (r) + r^{-1}\bD(r)^{1+\gamma_3},\label{e:in}\\
\bSigma (r) +r\,\bSigma'(r) \leq C  \, r^2\, \bD (r)\, \leq C r^{2+m} \eps_3^{2}.\label{e:Sigma1}
\end{gather}
\end{proposition}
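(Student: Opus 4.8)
All four estimates are \emph{first--variation identities} for the area minimizing current $T$ — equivalently for its center--manifold graph $\bT_F$ — transported onto $\cM$ via the normal approximation $N$ and the coarea rewriting \eqref{e:Hmedia}--\eqref{e:Dmedia}. The plan is: prove \eqref{e:H'} by differentiating $\bH$ directly and integrating by parts on $\cM$; prove the two bulk identities \eqref{e:out} and \eqref{e:in} by inserting two explicit admissible vector fields (one ``outer'', normal to $\cM$, one ``inner'', tangent and radial) into the stationarity relation $\delta T(\cdot)=0$ and comparing $T$ with $\bT_F$; prove \eqref{e:Sigma1} by a Poincar\'e inequality. Throughout one uses that $\cM$ is a $C^{3,\kappa}$ graph whose curvature quantities — its second fundamental form, the discrepancy between the Laplace--Beltrami $\Delta d$ of the geodesic distance $d$ from $\Phii(0)$ and its flat value $\tfrac{m-1}{d}$, the deviation of the metric from the Euclidean one — are all $O(\eps_3)$. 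All calculus on $\cM$ is carried out with the selection $N=\sum_i\a{N_i}$ and the approximate differential $DN=\sum_i\a{DN_i}$, for which the $Q$--valued integration--by--parts rules hold; since $\phi$ is only piecewise affine, $\phi''$ is understood distributionally (two point masses on $\de\cB_{r/2}$ and $\de\cB_r$) or $\phi$ is first regularized and a limit is taken.

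\textbf{Proof of \eqref{e:H'}.} Differentiating $\bH(r)=-\int_\cM\phi'(d/r)\,|N|^2/d$ gives $\bH'(r)=\tfrac1{r^2}\int_\cM\phi''(d/r)\,|N|^2$; writing $\phi''(d/r)=r\,\langle\nabla(\phi'(d/r)),\nabla d\rangle$ and integrating by parts on $\cM$ (no boundary term, $\phi'(d/r)$ being supported in $\{r/2<d<r\}$),
\[
\bH'(r)=-\frac1r\int_\cM\phi'(d/r)\Big(|N|^2\,\Delta d+\langle\nabla|N|^2,\nabla d\rangle\Big).
\]
Since $\langle\nabla|N|^2,\nabla d\rangle=\de_{\hat r}|N|^2=2\sum_i\langle N_i,\de_{\hat r}N_i\rangle$ and $\Delta d=\tfrac{m-1}{d}+O(\eps_3)$, the two principal terms reproduce $\tfrac{m-1}{r}\bH(r)+\tfrac2r\bE(r)$, while the curvature remainder is bounded by $C\eps_3\int_\cM|\phi'(d/r)|\,|N|^2/d=C\eps_3\,\bH(r)\le C\,\bH(r)$.

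\textbf{Proof of \eqref{e:out} and \eqref{e:in}.} For \eqref{e:out}, insert into $\delta T=0$ the outer variation field $X(p):=\sum_i\phi(d(p)/r)\,N_i(p)$ (extended off $\cM$ through the nearest--point projection), so that $0=\delta\bT_F(X)+\big(\delta T(X)-\delta\bT_F(X)\big)$. Expanding the area of the graph of $N$ over the curved base $\cM$ by the analogue of Proposition~\ref{c:taylor_area}, one gets $\delta\bT_F(X)=\bD(r)-r^{-1}\bE(r)$ up to: cubic--in--$DN$ remainders, controlled via \eqref{e:Lip_regional}--\eqref{e:Dir_regional} by $C\bD(r)^{1+\gamma_3}$; and terms quadratic in the second fundamental form of $\cM$, controlled by $C\eps_3^2\int_\cM\phi(d/r)\,|N|^2=C\eps_3^2\,\bSigma(r)$. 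The discrepancy $\delta T(X)-\delta\bT_F(X)$ lives where $T\ne\bT_F$, whose mass is estimated Whitney region by Whitney region (Proposition~\ref{p:covering}) through \eqref{e:err_regional}; together with $\Lip(N)$ this is again $\le C\bD(r)^{1+\gamma_3}$, and rearranging gives \eqref{e:out}. For \eqref{e:in} one uses instead the inner variation field $Y(p):=\phi(d(p)/r)\,d(p)\,\de_{\hat r}(p)$ tangent to $\cM$, the generator of a radial reparametrization; by the Rellich--Pohozaev identity, and using $\dv_\cM(d\,\de_{\hat r})=m+O(\eps_3 d)$ and that the relevant part of $DY$ is $\phi(d/r)\,\Id+\tfrac dr\phi'(d/r)\,\de_{\hat r}\otimes\de_{\hat r}+O(\eps_3)$, the inner variation of $\tfrac12\int_\cM|DN|^2$ along $Y$ reduces to $(m-2)\bD(r)-r\bD'(r)+\tfrac2r\bG(r)$ modulo an $O(\eps_3)\bD(r)$ curvature error. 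Equating this with $\delta\bT_F(Y)$ and with $\delta T(Y)=0$, and bounding the area remainders and the $T$--versus--$\bT_F$ discrepancy as above — but now they depend on $r$ through the moving error set and hence also produce $C\bD(r)^{\gamma_3}\bD'(r)+r^{-1}\bD(r)^{1+\gamma_3}$ — one obtains \eqref{e:in} after dividing by $r$.

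\textbf{Proof of \eqref{e:Sigma1}, and the main obstacle.} Pointwise $|N|^2\le 2\,\cG(N,Q\a{\etaa\circ N})^2+2Q|\etaa\circ N|^2$; the first term integrates, by the Poincar\'e inequality for $Q$--valued functions on the essentially round region $\cB_r\subset\cM$, to $\le Cr^2\bD(r)$, and the second term is controlled by the averaging estimate \eqref{e:av_region} (with a suitable $a$) together with an elementary Poincar\'e/Caccioppoli inequality for the scalar $\etaa\circ N$, again by $\le Cr^2\bD(r)$; hence $\bSigma(r)\le Cr^2\bD(r)$. Differentiating $\bSigma$ in $r$ leaves only a boundary integral $\int_{\de\cB_r}|N|^2$, estimated the same way, which bounds $r\bSigma'(r)$. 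Finally $\bD(r)\le C\eps_3^2\,r^m$ follows by summing \eqref{e:Dir_regional} over the cubes meeting $\cB_r$ (Proposition~\ref{p:covering}) and inserting the excess decay $\bE(T,\B_{6\sqrt m\varrho})\le C\eps_3^2\varrho^{2-2\delta_2}$ valid along the interval of flattening, so $r^2\bD(r)\le Cr^{2+m}\eps_3^2$. The genuinely hard part is the derivation of \eqref{e:out} and above all \eqref{e:in}: one must make the test fields honestly admissible for $\delta T$, keep precise track of the error caused by replacing $T$ with its center--manifold graph $\bT_F$ (where \eqref{e:err_regional} and the entire construction of \S~4 enter), differentiate that error in $r$ to isolate the $\bD(r)^{\gamma_3}\bD'(r)$ term, and verify that a single exponent $\gamma_3$ can be chosen compatibly with the higher--order control of $DN$ from \eqref{e:Lip_regional}--\eqref{e:Dir_regional} and with $\gamma_2=\gamma_1/4$.
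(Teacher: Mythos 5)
Your overall strategy for \eqref{e:H'}, \eqref{e:out} and \eqref{e:in} is the same as the paper's (outer field $\phi(d/r)(p-\p(p))$, inner field $\frac{d}{r}\phi(d/r)\de_{\hat r}$, stationarity of $T$, comparison with $\bT_F$ on the set where they differ), and your integration-by-parts derivation of \eqref{e:H'} via the Laplacian comparison $\Delta d=\frac{m-1}{d}+O(\eps_3^2 d)$ is a legitimate variant of the paper's computation in exponential coordinates. But there are two genuine gaps. First, your proof of \eqref{e:Sigma1} does not work: neither $\int_{\cB_r}\cG(N,Q\a{\etaa\circ N})^2\leq Cr^2\bD(r)$ nor $\int_{\cB_r}|\etaa\circ N|^2\leq Cr^2\bD(r)$ is a Poincar\'e-type inequality --- both fail for $N$ equal to a constant $Q$-point with distinct sheets and nonzero average, which has $\bD\equiv 0$ and $\bSigma>0$ (and \eqref{e:av_region} only gives an $L^1$ bound with error terms of size $E\,\ell^{2+\ldots}|\cV|$, not a bound by the energy). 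The inequality that is actually true, $\int_{\cB_r(q)}|N|^2\leq Cr^2\bD(r)$, i.e.\ \eqref{e:L2_pieno2}, is \emph{not} local: it uses the hypothesis $\bI\geq 1$, which gives $\bH(r)\leq r\,\bD(r)$, combined with a radial-integration/trace argument of the type used in the Reverse Sobolev proposition to bound $\bSigma(r)$ by $Cr\,\bH(r)+Cr^2\bD(r)$. Your argument never invokes $\bI\geq 1$, which is precisely why it cannot be correct.

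Second, in the outer and inner variations you list only remainders ``cubic in $DN$'' and ``quadratic in the second fundamental form'', thereby omitting the term linear in the mean curvature paired with the average, ${\rm Err}_1^o=-Q\int\varphi_r\langle H_\cM,\etaa\circ N\rangle$ (and its inner analogue). This is the delicate term: a naive bound $|\langle H_\cM,\etaa\circ N\rangle|\leq C\eps_3|N|$ only yields $C\eps_3\,\bSigma(r)^{1/2}r^{m/2}$, which is not dominated by $C\bD(r)^{1+\gamma_3}+C\eps_3^2\bSigma(r)$; one must use the superlinear averaging estimate \eqref{e:av_region} --- the very reason the center manifold is ``centered''. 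Relatedly, converting the per-cube upper bounds $E^{\gamma_2}\ell_i^{\gamma_2}$, $E\,\ell_i^{m+2-2\delta_2}$, $E^{1+\gamma_2}\ell_i^{m+2+\gamma_2}$ into powers $\bD(r)^{\gamma_3}$ and $\bD(r)^{1+\gamma_3}$ is not a consequence of \eqref{e:Lip_regional}--\eqref{e:err_regional} alone: it requires the splitting-before-tilting \emph{lower} bounds of Propositions~\ref{p:separ} and \ref{p:splitting} on the balls $B(L)$ of Proposition~\ref{p:covering} (as in \eqref{e:N_sotto}--\eqref{e:Dirichlet_sotto} and \eqref{e:gamma(a)}--\eqref{e:D'_globale}), again together with \eqref{e:L2_pieno2} and hence with $\bI\geq1$. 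As written, the hardest part of \eqref{e:out} and \eqref{e:in} --- producing the factors $\bD(r)^{\gamma_3}$ and handling the $H_\cM$--average term --- is asserted rather than proved, and the specific inputs that make it possible are missing from your argument.
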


We assume for the moment the proposition and prove the theorem.

\begin{proof}[Proof of Theorem \ref{t:frequency}.]
It enough to consider the case in which $\bI >1$ on $]a,b[$.
Set $\bOmega(r) := \log \bI(r)$.
By Proposition~\ref{p:variation}, if $\eps_3$ is sufficiently small, then 
\begin{equation}\label{e:out2}
\frac{\bD (r)}{2} 
\leq
\frac{\bE (r)}{r} 
\leq
2\, \bD (r),
\end{equation}
from which we conclude that $\bE>0$ over the interval $]a, b'[$.
Set for simplicity $\bF(r) := \bD(r)^{-1} - r \bE(r)^{-1}$, and compute
\[
- \bOmega'  (r) = \frac{\bH' (r)}{\bH(r)} - \frac{\bD'(r)}{\bD(r)} - \frac{1}{r} 
\stackrel{\eqref{e:out}}{=} \frac{\bH'(r)}{\bH(r)} - \frac{r\bD'(r)}{\bE(r)} - \bD'(r) \bF (r) - \frac{1}{r}.
\]
Again by Proposition~\ref{p:variation}:
\begin{equation}\label{e:pezzo_1}
\frac{\bH'(r)}{\bH(r)} \stackrel{\eqref{e:H'}}{\leq} \frac{m-1}{r} + C + \frac2r\,\frac{\bE(r)}{\bH(r)},
\end{equation}
\begin{equation}\label{e:E denominatore}
|\bF(r)| \stackrel{\eqref{e:out}}{\leq} C \, \frac{r (\bD (r)^{1+\gamma_3} + \bSigma (r))}{\bD (r)\,\bE (r)} \stackrel{\eqref{e:out2}}{\leq} C \, \bD (r)^{\gamma_3-1}+ C\,\frac{\bSigma (r)}{\bD (r)^2},
\end{equation}
\begin{align}\label{e:pezzo_2}
- \frac{r\bD'(r)}{\bE(r)} &\stackrel{\eqref{e:in}}{\leq} \left(C - \frac{m-2}{r}\right) \frac{r\bD(r)}{\bE(r)} - \frac2r \, \frac{\bG(r)}{\bE(r)}\notag\\
&\quad+  C\frac{r \bD(r)^{\gamma_3} \bD' (r) + \bD (r)^{1+\gamma_3}}{\bE(r)}\nonumber\\
&\leq C - \frac{m-2}{r} 
+ \frac{C}{r} \bD(r) |\bF(r)|
- \frac2r \, \frac{\bG(r)}{\bE(r)}\notag\\
& \quad +  C \bD(r)^{\gamma_3-1} \bD' (r) + C\frac{\bD (r)^{\gamma_3}}{r}\notag\\
& \stackrel{\eqref{e:Sigma1},\, \eqref{e:E denominatore}}{\leq}C - \frac{m-2}{r} 
- \frac2r \, \frac{\bG(r)}{\bE(r)}
+  C \bD(r)^{\gamma_3-1} \bD' (r) + C\,r^{\gamma_3\,m -1},
\end{align}
where we used the rough estimate 
$\bD(r) \leq C\, r^{m+2-2\delta_2}$ coming from \eqref{e:Dir_regional} of
Theorem~\ref{t:approxN} and the condition (Stop).

By Cauchy-Schwartz, we have
\begin{equation}\label{e:cauchy-schwartz}
\frac{\bE(r)}{r\bH(r)}\leq \frac{\bG(r)}{r \bE(r)}.
\end{equation}
Thus, by \eqref{e:pezzo_1},
\eqref{e:pezzo_2} and  \eqref{e:cauchy-schwartz}, we conclude
\begin{align}
- \bOmega' (r) &\leq C +C\,r^{\gamma_3\,m -1}+
C r \bD(r)^{\gamma_3-1} \bD' (r) 
- \bD'(r) \bF(r)\notag\\
& \stackrel{\eqref{e:E denominatore}}{\leq}C\,r^{\gamma_3\,m -1}+
C\bD(r)^{\gamma_3-1} \bD'(r) + C \frac{\bSigma (r) \bD' (r)}{\bD(r)^2}.\label{e:grosso} 
\end{align}
Integrating \eqref{e:grosso} we conclude:
\begin{multline*}
\bOmega (a) - \bOmega (b)\leq C + C \left(\bD(b)^{\gamma_3} - \bD(a)^{\gamma_3}\right) \notag\\
+ C \left[ \frac{\bSigma (a)}{\bD (a)} - \frac{\bSigma (b)}{\bD (b)} + \int_a^b \frac{\bSigma' (r)}{\bD (r)}\, dr\right]\stackrel{\eqref{e:Sigma1}}{\leq} C.\notag
\end{multline*}
\end{proof}

\subsubsection{Proof of Proposition~\ref{p:variation}}

The remaining part of this subsection is devoted to give some arguments for the proof
of the first variation formulas.

The estimate \eqref{e:H'} follows from a straightforward computation:
using the area formula and setting $y =rz$, we have
\begin{align*}
\bH (r) & 
= - r^{m-1}\int_{T_q\cM} \frac{\phi'\left(|z|\right)}{|z|}\,|N|^2(\exp (r z))\, \textbf{J}\exp (r z)\, dx,
\end{align*}
and differentiating under the integral sign, we easily get \eqref{e:H'}:
\begin{align*}
\bH' (r) = {} & - (m-1)\,r^{m-2}\int_{T_q\cM} \frac{\phi'(|z|)}{|z|}\,|N|^2(\exp (r z))\,\textbf{J}\exp (r z)\, dz
\notag\\
& - 2 \, r^{m-1}\int_{T_q\cM} \phi'(|z|)\,\sum_i \left\langle N_i , \de_{\hat{r}} N_i \right\rangle(\exp (r z))\,\textbf{J}\exp (r z)\, dz
\notag\\
& - r^{m-1}\int_{T_q \cM} \frac{\phi'(|z|)}{|z|}\,|N|^2(\exp (r z))\,\frac{d}{d r} \textbf{J}\exp (r z)\, dz
\notag\\
= {} & \frac{m-1}{r}\, \bH (r) + \frac{2}{r}\, \bE (r) +O(1)\, \bH (r),
\end{align*}
where we the following simple fact for the Jacobian of the exponential map
$\frac{d}{d r} \textbf{J}\exp (r\,z) = O(1)$, because $\cM$ is a $C^{3,\kappa}$ submanifold
and the exponential map $\textup{exp}$ is a $C^{2, \kappa}$ map.

Similarly, \eqref{e:Sigma1} follows by simple computation which involve a Poincar\'e
inequality: namely, if $\bI \geq 1$, then
\begin{equation}\label{e:L2_pieno2}
\int_{\mathcal{B}_r (q)} |N|^2 \leq C\,r^2\bD (r).
\end{equation}
We refer to \cite{DS5} for the details of the proof.

\medskip

Here we try to explain the remaining two estimates, which instead are
connected to the first variation $\delta T (X)$ of the area minimizing current $T$ along
a vector field $X$.

The idea is the following: since the first variations of $T$ are zero,
we compute them using its
approximation $N$ and derive the integral equality in the Proposition~\ref{p:variation}.
To understand the meaning of these estimates, consider
$u:\R^m \to \R^n$ a harmonic function. Then,
computing the variations of the Dirichlet energy
of $u$ leads to the following two identities:
\begin{gather*}
\int_{B_r} |Du|^2 = \int_{\de B_r} u \cdot \frac{\de u}{\de \nu},\\
\int_{\de B_r} |Du|^2 = \frac{m-2}{r} \int_{B_r} |Du|^2 + 2 \int_{\de B_r} \left\vert \frac{\de u}{\de \nu} \right\vert^2,
\end{gather*}
which are the exact analog of \eqref{e:out} and \eqref{e:in} without any error term.
What we need to do is then to replace the Dirichlet
energy with the area functional, and to consider 
the fact that the normal approximation $N$ is only approximately
stationary with respect to this functional.

\medskip

We start fixing a tubular neighborhood $\bU$ of $\cM$ and the normal
projection $\p:\bU\to \cM$. Observe that
$\p\in C^{2,\kappa}$.
We will consider:
\begin{itemize}
\item[(1)] the {\em outer variations}, where $X (p)= X_o (p) := \phi \left(\frac{d(\p(p))}{r}\right) \, (p - \p(p) )$.
\item[(2)] the {\em inner variations}, where $X (p) = X_i (p):= Y(\p(p))$ with
\[
Y(p) := \frac{d(p)}{r}\,\phi\left(\frac{d(p)}{r}\right)\, \frac{\de}{\de \hat r} \quad
\forall \; p \in \cM.
\]
\end{itemize}

Consider now the map $F(p) := \sum_i \a{p+ N_i (p)}$ and the current $\bT_F$
associated to its image.
Observe that $X_i$ and $X_o$ are supported in $\p^{-1} (\cB_r (q))$
but none of them is {\em compactly} supported. 
However, it is simple to see that
$\delta T(X) = 0$.
Then, we have 
\begin{multline}\label{e:Err4-5}
|\delta \bT_F (X)| = |\delta \bT_F (X) - \delta T (X)|\\
\leq \underbrace{\int_{\supp (T)\setminus \im (F)}  \left|\dv_{\vec T} X\right|\, d\|T\|
+ \int_{\im (F)\setminus \supp (T)} \left|\dv_{\vec \bT_F} X\right|\, d\|\bT_F\|}_{{\rm Err}_4},
\end{multline}
where $\im(F)$ is the image of the map $F(x) = \sum_i\a{(x,N_i(x))}$, i.e.~the
support of the current $\bT_F$.

Set now for simplicity $\varphi_r (p) :=  \phi \big(\frac{d(p)}{r}\big)$.
It is not hard to realize that
the mass of the current $\bT_F$ can be expressed in the following way:
\begin{align}
\mass (\mathbf{T}_F) = {}&  Q \, \cH^m (\cM) - Q\int_\cM \langle H, \etaa\circ N\rangle
+ \frac{1}{2} \int_\cM |D N|^2\nonumber\\
&+ \int_\cM \sum_i \Big(P_2 (x,N_i) +  P_3 (x, N_i, DN_i) + R_4 (x, DN_i)\Big),  \label{e:taylor_area}
\end{align}
where $P_2$, $P_3$ and $R_4$ are quadratic, cubic and fourth order errors terms
(see \cite[Theorem 3.2]{DS2})
One can then compute the first variation of a push-forward current $\bT_F$ and obtain (cp.~\cite[Theorem 4.2]{DS2})
\begin{equation}\label{e:ov graph}
\delta \bT_F (X_o) = \int_\cM\Big( \ph_r\,|D N|^2 + \sum_{i=1}^Q N_i\otimes \nabla \ph_r : D N_i \Big) + \sum_{j=1}^3 \textup{Err}^o_j,
\end{equation}
where the errors ${\rm Err}^o_j$ satisfy
\begin{gather}
{\rm Err}_1^o = - Q \int_\cM \varphi_r \langle H_\cM, \etaa\circ N\rangle,\label{e:outer_resto_1}\\
|{\rm Err}_2^o| \leq C \int_\cM |\varphi_r| |A|^2|N|^2,\label{e:outer_resto_2}\\
|{\rm Err}_3^o| \leq C \int_\cM \big(|N| |A| + |DN|^2 \big) \big( |\varphi_r| |DN|^2  + |D\varphi_r| |DN| |N| \big)\label{e:outer_resto_3}\,,
\end{gather}
here $H_\cM$ is the mean curvature vector of $\cM$.
Plugging \eqref{e:ov graph} into \eqref{e:Err4-5}, we then conclude
\begin{equation}\label{e:ov_con_errori}
\left| \bD (r) - r^{-1} \bE (r)\right| \leq \sum_{j=1}^4 \left|\textup{Err}^o_j\right|\, ,
\end{equation}
where ${\rm Err}_4^o$ corresponds
to ${\rm Err}_4$ of \eqref{e:Err4-5} when $X=X_o$.
Arguing similarly with $X=X_i$ (cp.~\cite[Theorem 4.3]{DS2}),
we get
\begin{align}\label{e:iv graph}
\delta \bT_F (X_i) =& \frac{1}{2} \int_\cM\Big(|D N|^2 {\rm div}_{\cM} Y  - 2
\sum_{i=1}^Q  \langle D N_i : ( D N_i \cdot D_{\cM} Y)\rangle \Big) + \sum_{j=1}^3 \textup{Err}^i_j\, ,
\end{align}
where this time the errors ${\rm Err}^i_j$  satisfy
\begin{gather}
{\rm Err}_1^i = - Q \int_{ \cM}\big( \langle H_\cM, \etaa \circ N\rangle\, {\rm div}_{\cM} Y + \langle D_Y H_\cM, \etaa\circ N\rangle\big)\, ,\label{e:inner_resto_1}\\
|{\rm Err}_2^i| \leq C \int_\cM |A|^2 \left(|DY| |N|^2  +|Y| |N|\, |DN|\right), \label{e:inner_resto_2}
\end{gather}
\begin{align}
|{\rm Err}_3^i|&\leq C \int_\cM |Y| |A| |DN|^2 \big(|N| + |DN|\big) \notag\\
&\quad+ |DY| \big(|A|\,|N|^2 |DN| + |DN|^4\big)\Big)\label{e:inner_resto_3}\, .
\end{align}
Straightforward computations lead to 
\begin{align}\label{e:DY}
D_{\cM} Y (p) = \phi'\left(\frac{d(p)}{r}\right) \, \frac{d(p)}{r^2} \, \frac{\de}{\de \hat r} \otimes \frac{\de}{\de \hat r} + \phi \left(\frac{d(p)}{r}\right) \left( \frac{\Id}{r} + O(1)\right)\, ,
\end{align}
\begin{align}\label{e:divY}
\dv_\cM\, Y (p) 
& = \phi'\left(\frac{d(p)}{r}\right) \, \frac{d(p)}{r^2} + \phi\left(\frac{d(p)}{r}\right) \, \left(\frac{m}{r} + O(1) \right)\, .
\end{align}
Plugging \eqref{e:DY} and \eqref{e:divY} into \eqref{e:iv graph} and using \eqref{e:Err4-5} we then
conclude
\begin{equation}\label{e:iv_con_errori}
\left| \bD' (r) - (m-2)r^{-1} \bD (r) - 2 r^{-2} \bG (r)\right|
\leq C \bD (r) + \sum_{j=1}^4 \left|{\rm Err}^i_j\right|\, .
\end{equation}
Proposition~\ref{p:variation} is then proved by the estimates of the errors
terms done in the next subsection.

\subsubsection{Estimates of the errors terms}
We consider the family of pairs $\mathscr{Z} =\{(J_i, B(J_i))\}_{i\in \N}$
introduced in the previous section,
and set
\[
\mathcal{B}^i := \Phii (B (J_i))\quad
\text{and}
\quad \mathcal{U}_i = \cup_{H\in \mathscr{W} (J_i)} \Phii (H) \cap \cB_r (q)\, .
\] 
Set $\mathcal{V}_i:= \mathcal{U}_i\setminus \mathcal{K}$, where $\mathcal{K}$ is the coincidence set of Theorem~\ref{t:approxN}.
By a simple application of Theorem~\ref{t:approxN} we derive the following estimates:
\begin{gather}
\int_{\mathcal{U}_i} |\etaa\circ N| \leq C {E}\,\ell_i^{2+m+\frac{\gamma_2}{2}} + C \int_{\cU^i} |N|^{2+\gamma_2},\label{e:media}\\
\int_{\mathcal{U}_i} |DN|^2 \leq C {E}\, \ell_i^{m+2-2\delta_2},\label{e:Dirichlet_sopra}\\
\|N\|_{C^0 (\mathcal{U}_i)} + \sup_{p\in \supp (T) \cap \p^{-1} (\mathcal{U}_i)} |p- \p (p)| \leq C {E}^{\frac{1}{2m}} \ell_i^{1+\beta_2},\label{e:N_sopra}\\
\Lip (N|_{\mathcal{U}_i}) \leq C {E}^{\gamma_2} \ell_i^{\gamma_2},\label{e:Lipschitz}\\
\mass (T\res \p^{-1} (\mathcal{V}_i)) + \mass (\bT_F \res \p^{-1} (\mathcal{V}_i)) \leq C {E}^{1+\gamma_2} \ell_i^{m+2+\gamma_2}.\label{e:errori_massa}
\end{gather}

Observe that the separation between $\cB^i$ and
$\partial \mathcal{B}_r (q)$ is larger than $\ell(J_i)/4$ by Proposition~\ref{p:covering} (i), and then
$\varphi_r (p) = \phi \big(\frac{d(p)}{r}\big)$ satisfies
\begin{align}
\inf_{p\in \cB^i} \varphi_r (p) \geq (4r)^{-1} \ell_i\label{e:peso_1}\, ,
\end{align}
where $\ell_i := \ell (J_i)$. From this and Proposition~\ref{p:covering} (iii), 
we also obtain
\[
\sup_{p\in \mathcal{U}_i} \varphi_r (p)
- \inf_{p\in \mathcal{U}_i}\varphi_r (p) \leq C \Lip (\varphi_r) \ell_i \leq \frac{C}{r}\,\ell_i \stackrel{\eqref{e:peso_1}}{\leq}
C \inf_{p\in \mathcal{B}_i} \varphi_r (p)\, ,
\]
which translates into
\begin{equation}\label{e:peso_2}
\sup_{p\in \mathcal{U}_i} \varphi_r (p) \leq C \inf_{p\in \cB^i} \varphi_r (p)\, .
\end{equation}
Moreover, by an application of the \textit{splitting-before-tilting} estimates
in Proposition~\ref{p:separ} and Proposition~\ref{p:splitting}, we infer that 
\begin{align}
\int_{\cB^i} |N|^2 &\geq c \, {E}^{\frac{1}{m}} \ell_i^{m+2+2\beta_2}  \quad 
\mbox{if $L_i\in \mathscr{W}_h$},\label{e:N_sotto}\\
\int_{\cB^i} |DN|^2 &\geq c \, {E}\, \ell_i^{m+2-2\delta_2}
\quad\mbox{if $L_i \in \mathscr{W}_e$}\, .\label{e:Dirichlet_sotto}
\end{align}

This easily implies the following estimates under the hypotheses $\bI\geq 1$:
by applying \eqref{e:L2_pieno2}, \eqref{e:peso_1}, \eqref{e:N_sotto} and
\eqref{e:Dirichlet_sotto}, we get, for suitably chosen $\gamma(t), C(t)>0$,
\begin{align}\label{e:gamma(a)}
\sup_i{E}^t\Big[\ell_i^t + \Big(\inf_{\cB^i} \varphi_r\Big)^{\frac{t}{2}} \ell_i^{\frac{t}{2}}\Big]
&\leq C(t) \sup_i\Big( \int_{\cB^i} \varphi_r (|DN|^2 + |N|^2)\Big)^{\gamma (t)} \notag\\
&\leq C(t) \bD (r)^{\gamma(t)}\,,
\end{align}
and similarly
\begin{align}\label{e:D_globale}
\sum_i \big(\inf_{\cB^i} \varphi_r \big) {E}\, \ell_i^{m+2+\frac{\gamma_2}{4}} 
&\leq C \sum_i \int_{\cB^i} \varphi_r (|DN|^2 +|N|^2) \notag\\
&\leq  C \bD (r)\,,
\end{align}
\begin{align}\label{e:D'_globale}
\sum_i {E}\, \ell_i^{m+2+\frac{\gamma_2}{4}} &\leq C \int_{\cB_r (q)} \big(|DN|^2+|N|^2\big)\notag\\
&\leq C \big(\bD (r) + r \bD' (r)\big).
\end{align}

\medskip

We can now pass to
estimate the errors terms in \eqref{e:out} and \eqref{e:in} in order to conclude
the proof of Proposition~\ref{p:variation}.

\medskip

\noindent {\bf Errors of type 1.}
By Theorem~\ref{t:cm}, the map $\phii$ defining the center
manifold satisfies $\|D\phii\|_{C^{2, \kappa}} \leq C\, {E}^{\frac12}$,
which in turn implies $\|H_\cM\|_{L^\infty} + \|DH_\cM\|_{L^\infty} \leq C\, {E}^{\frac12}$ (recall that $H_\cM$ denotes
the mean curvature of $\cM$).
Therefore, by \eqref{e:peso_2}, \eqref{e:media}, \eqref{e:D_globale} and \eqref{e:gamma(a)}, we get
\begin{align}
\left|{\rm Err}^o_1\right| &\leq C \int_\cM \varphi_r \,|H_\cM|\,
|\etaa\circ N| \notag\allowdisplaybreaks\\
& \leq C \, {E}^{\frac12} \sum_j  
\Big(\big(\sup_{\mathcal{U}_i} \varphi_r\big)\,{E}\, \ell_j^{2+m+\gamma_2} + C \int_{\mathcal{U}_j} \ph_r|N|^{2+\gamma_2}\Big)\notag\allowdisplaybreaks\\
&\leq C \bD (r)^{1+\gamma_3} 
+ C \sum_j {E}^{\frac12} \,\ell_j^{\gamma_2(1+\beta_2)} \int_{\mathcal{U}_j} \ph_r|N|^2
\leq C \bD (r)^{1+\gamma_3}\,,\notag\allowdisplaybreaks
\end{align}
and analogously
\begin{align}
\left|{\rm Err}^i_1\right| &\leq C\,r^{-1} \int_\cM \big(|H_\cM|+ |D_Y H_\cM|\big)
|\etaa\circ N| \notag\allowdisplaybreaks\\
& \leq C \,r^{-1} {E}^{\frac12} \sum_j  
\Big(\,{E}\, \ell_j^{2+m+\gamma_2} + C \int_{\mathcal{U}_j} |N|^{2+\gamma_2}\Big)\notag\\
&\leq C\,r^{-1} \bD (r)^{\gamma}\big(\bD(r)+r\,\bD'(r)\big).\notag
\end{align}

\noindent {\bf Errors of type 2.}
From $\|A\|_{C^0} \leq C \|D \phii\|_{C^2} \leq C {E}^{\frac{1}{2}} \leq C \eps_3$, it follows that
${\rm Err}^o_2 \leq C \eps_3^2 \bSigma (r)$.
Moreover, since $|D X_i| \leq Cr^{-1}$, \eqref{e:L2_pieno2} leads to
\begin{align}
\left|{\rm Err}^i_2\right| &\leq C r^{-1} \int_{\cB_r (p_0)} |N|^2 + C \int \varphi_r |N| |DN|
\leq C \bD (r)\, .\notag
\end{align}

\noindent {\bf Errors of type 3.}
Clearly, we have
\begin{align}
\left|{\rm Err}^o_3\right| \leq & \underbrace{\int \varphi_r \left(|DN|^2 |N| + |DN|^4\right)}_{I_1}
+ C\underbrace{r^{-1} \int_{\cB_r (q)} |DN|^3 |N|}_{I_2}\notag\\
& + C\underbrace{r^{-1} \int_{\cB_r (q)} |DN| |N|^2}_{I_3}\, .\notag
\end{align}
We estimate separately the three terms (recall that
$\gamma_2> 4\delta_2$):
\begin{align}
I_1 & \leq \int_{\cB_r (p_0)} \varphi_r (|N|^2 |DN| + |DN|^3)
\leq I_3 +  C \sum_j \sup_{\cU_j} \varphi_r {E}^{1+\gamma_2} \ell_j^{m+2+\frac{\gamma_2}{2}}\nonumber\\
&\stackrel{\eqref{e:D_globale}\,\&\,\eqref{e:gamma(a)}}{\leq} I_3 + C \bD (r)^{1+\gamma_3},\notag
\end{align}
\begin{align}
I_2 &\leq C r^{-1} \sum_j {E}^{1+\frac{1}{2m}+\gamma_2} \ell_j^{m+3+\beta_2 +\frac{\gamma_2}{2}}\notag\\
&\stackrel{\eqref{e:peso_2}}{\leq} C \sum_j {E}^{1+\frac{1}{2m}+\gamma_2} \ell_j^{m+2+\beta_2 +\frac{\gamma_2}{2}} \inf_{\cB^j} \varphi_r
\stackrel{\eqref{e:D_globale}\,\&\,\eqref{e:gamma(a)}}{\leq} C \bD (r)^{1+\gamma_3},\notag
\end{align}
\begin{align}
I_3 &\leq C r^{-1} \sum_j {E}^{\gamma_2} \ell_j^{\gamma_2} \int_{\mathcal{U}_j} |N|^2
\stackrel{\eqref{e:gamma(a)}}{\leq} C r^{-1} \bD (r)^{\gamma_3}  \int_{\cB_r (q)} |N|^2
\stackrel{\eqref{e:L2_pieno2}}{\leq} C \bD (r)^{1+\gamma_3}\notag
\end{align}
For what concerns the inner variations, we have 
\begin{align}
|{\rm Err}^i_3| &\leq C \int_{\cB_r (q)} \big(r^{-1}|DN|^3 + r^{-1} |DN|^2 |N| + r^{-1} |DN| |N|^2\big)\, .\notag
\end{align}
The last integrand corresponds to $I_3$, while the remaining part can be estimated as
follows:
\begin{align}
\int_{\cB_r (q)} r^{-1}(|DN|^3 + |DN|^2 |N|)&\leq C \sum_j r^{-1}({E}^{\gamma_2} \ell_j^{\gamma_2} +
{E}^{\frac{1}{2m}} \ell_j^{1+\beta_2}) \int_{\mathcal{U}_j} |DN|^2\notag\allowdisplaybreaks\\
&\stackrel{\mathclap{\eqref{e:gamma(a)}}}{\leq} \, C\,r^{-1} \bD (r)^{\gamma_3} \int_{\cB_r (q)} |DN|^2\notag\allowdisplaybreaks\\
&\leq 
C \bD (r)^{\gamma_3} \left(\bD' (r) + r^{-1}\bD (r)\right)\, .\notag
\end{align}

\medskip

\noindent {\bf Errors of type 4.} We compute explicitly
\begin{align}
|D X_o (p)| &\leq 2\, |p - \p(p)| \, \frac{|Dd(\p(p),q)|}{r} + \varphi_r (p) \, |D(p-\p(p))|\notag\\
&\leq C\,\left(\frac{|p - \p(p)|}{r} + \varphi_r (p)\right)\, .\notag
\end{align}
It follows readily from \eqref{e:Err4-5}, \eqref{e:N_sopra}
and \eqref{e:errori_massa} that
\begin{align}
|{\rm Err}^o_4| &\leq \sum_i C \Big(r^{-1} {E}^{\frac{1}{2m}} \ell_i^{1+\beta_2}
+ \sup_{\mathcal{U}_i} \varphi_r\Big) {E}^{1+\gamma_2} \ell_i^{m+2+\gamma_2}\notag\allowdisplaybreaks\\
&\stackrel{\eqref{e:peso_1}\,\& \,\eqref{e:peso_2}}{\leq} C \sum_i
\left[
{E}^{\gamma_2} \ell_i^{\frac{\gamma_2}{4}}\right] 
\inf_{\cB_i} \varphi_r\, {E}\, \ell_i^{m+2+\frac{\gamma_2}{4}}
\stackrel{\eqref{e:D_globale}\,\&\eqref{e:gamma(a)}}{\leq} C \bD (r)^{1+\gamma_3}.
\label{e:stima4-5o}
\end{align}
Similarly, since $|D X_i|\leq C r^{-1}$, we get
\begin{align*}
{\rm Err}^i_4 &\leq C r^{-1} \sum_j \Big({E}^{\gamma_2} \ell_j^{\frac{\gamma_2}{2}}\Big)
{E}\,\ell_j^{m+2+\frac{\gamma_2}{2}}\\
&\stackrel{\eqref{e:D'_globale}\,\&\,\eqref{e:gamma(a)}}{\leq} C \bD (r)^\gamma \left(\bD' (r) +r^{-1} \bD(r)\right)\, .\notag
\end{align*}

\begin{remark}
Note that the ``superlinear'' character of the estimates in Theorem~\ref{t:approxN}
has played a fundamental role in the control of the errors.
\end{remark}

\subsection{Boundness of the frequency}
We have proven in the previous subsection that the frequency of the $\cM$-normal approximation remains bounded within a center manifold in the corresponding interval of flattening. 
In order to pass into the limit along the different center manifolds,
we need also to
show that the frequency remains bounded in passing from one to the other.
This is again a consequence of the \textit{splitting-before-tilting} estimates and we provide here some details of the proof, referring to \cite{DS4}
for the complete argument.

To simplify the notation, we set $p_j := \Phii_j (0)$ and write simply $\cB_\rho$
in place of $\cB_\rho (p_j)$ .

\begin{theorem}[Boundedness of the frequency functions]\label{t:boundedness}
If the intervals of flattening are infinitely many, then there is a number $j_0\in \mathbb N$ such that
\begin{equation}\label{e:finita2}
\bH_j>0 \mbox{ on $]\frac{s_j}{t_j}, 3[$ for all $j\geq j_0$} \quad \mbox{and} \quad 
\sup_{j\geq j_0} \sup_{r\in ]\frac{s_j}{t_j}, 3[} \bI_j (r) <\infty\, .
\end{equation}
\end{theorem}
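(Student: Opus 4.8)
\emph{Overview of the plan.} I would reduce, via the almost-monotonicity of Theorem~\ref{t:frequency}, to a uniform bound on the single number $\bI_j(3)=3\,\bD_j(3)/\bH_j(3)$; then control $\bD_j(3)\lesssim E^j$ (where $E^j:=\bE(T_j,\B_{6\sqrt m})$) through Theorem~\ref{t:approxN}; and finally produce the crucial lower bound $\bH_j(3)\gtrsim E^j$, uniform in $j$, from the splitting-before-tilting estimates of Propositions~\ref{p:separ} and~\ref{p:splitting}, used both inside a single interval of flattening and across the transition between consecutive ones.

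\emph{Positivity of $\bH_j$ and the reduction.} First I would show $\bH_j>0$ on $]s_j/t_j,3[$ for all the relevant $j$. If instead $\bH_j(r_0)=0$ for some $r_0$ in that range, then $N_j\equiv0$ on the annulus $\cB_{r_0}\setminus\cB_{r_0/2}(\Phii_j(0))$; since $N_j$ is an almost stationary map for the area functional on $\cM_j$ (the content of the first-variation identities behind Proposition~\ref{p:variation}) and, on the coincidence set $\cK$ of Theorem~\ref{t:approxN}, one has $T_j=\bT_{F_j}$ with $\bT_{F_j}=Q\,\a{\cM_j}$ wherever $N_j=0$, a unique-continuation argument together with the smallness of $|\cL\setminus\cK|$ forces $T_j$ to agree with $Q\,\a{\cM_j}$ near $\Phii_j(0)$ up to an $\cH^m$-null set. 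By minimality $\supp(T_j)$ would then be a smooth multiplicity-$Q$ graph over $\cM_j$ near the origin, contradicting $0\in\Sing_Q(T)=\Sing_Q(T_j)$. Once $\bH_j>0$, Theorem~\ref{t:frequency} applied on $[a,3]$ gives $\sup_{r\in\,]s_j/t_j,3[}\bI_j(r)\le C(1+\bI_j(3))$, so the whole statement reduces to a bound on $\bI_j(3)$ uniform in $j$.

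\emph{The lower bound on $\bH_j(3)$.} This is the heart of the matter. The stopping rule defining $s_j$ produces, at scale $\bar r_j:=s_j/t_j<1$, a cube $L$ of the Whitney decomposition of $T_j$ with $\ell(L)\ge c_s\bar r_j$ meeting $\bar B_{\bar r_j}(0,\pi_0)$; either $L\in\sW_e\cup\sW_h$, or $L$ sits in a domain of influence $\sW_n(J)$ with $J\in\sW_e$ and $\ell(J)\ge\ell(L)$. In all cases Propositions~\ref{p:separ} and~\ref{p:splitting} force, on the associated ball $\cB^J\subset\cB_{C\bar r_j}(\Phii_j(0))$, a lower bound of the form $\int_{\cB^J}|N_j|^2\ge c\,E^j\,\bar r_j^{m+2+\sigma}$ for a dimensional $\sigma>0$; since the annuli entering the definition of $\bH_j$ at dyadic scales comparable to $\bar r_j$ see $\cB^J$, while $\bD_j(r)\le C\,E^j\,r^{m+2-2\delta_2}$ by \eqref{e:Dir_regional}, one obtains $\bI_j(\rho)\le C$ for $\rho$ in a dyadic range around $\bar r_j$. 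Because $\bI_j$ is almost \emph{increasing}, this controls the frequency only at small scales, and to reach scale $3$ one must pass to the next center manifold: at the junction $I_j\to I_{j+1}$ the scales on which $\cM_{j+1}$ is used near the top of $I_{j+1}$ fall in the range where $\cM_j$ (or, after stepping back over finitely many intervals, an earlier $\cM_{j'}$) is effective, and since the center manifolds there all reparametrize the same current $T$ at the transition scale, comparing their normal parts should give $\bI_{j+1}(3)\le C(1+\bI_j(\rho))$ with $\rho$ in the dyadic range above. The splitting bound then controls the right-hand side by a constant free of $j$; together with $\bD_j(3)\lesssim E^j$ this yields $\bI_j(3)\le C$ for all $j$, which is what we need.

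\emph{Where the difficulty lies.} The hard part is precisely this cross-interval step: one must prove that the frequency does not inflate at each junction --- that the constant in $\bI_{j+1}(3)\le C(1+\bI_j(\rho))$ is genuinely absorbed by the splitting-before-tilting gain at scale $\bar r_j$ and does not accumulate along the (possibly infinite) chain of intervals of flattening --- and this rests on a quantitative comparison of how two consecutive center manifolds differ over their common range of scales. That comparison is the ``splitting before tilting'' phenomenon of step~(E) (inspired by Rivière), and it exploits the full strength of the construction of \S\ref{ss:flattening} together with the ``superlinear'' error estimates of Theorem~\ref{t:approxN}. I would refer to \cite{DS4} for the details of this last point.
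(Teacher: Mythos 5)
Your reduction to a uniform bound on $\bI_j(3)$ via Theorem~\ref{t:frequency}, and the idea of obtaining the lower bound on $\bH_j(3)$ from the splitting-before-tilting estimates attached to the stopping cube that ends the previous interval of flattening, do reproduce one half of the paper's argument. But the paper's proof begins with a case distinction that your proposal never makes, and the missing case is a genuine gap. Recall that $t_{j+1}$ is defined as the largest element of $\mathcal{R}\cap\,]0,s_j]$, so there are two possibilities at each junction: $(A)$ $t_{j+1}=s_j$, and $(B)$ $t_{j+1}<s_j$ strictly. Your whole mechanism --- ``the new center manifold starts at a scale comparable to the stopping scale of the old one, so the separation forced by the stopping cube persists for the new normal approximation at scale $O(1)$'' --- only makes sense in case $(A)$. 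In case $(B)$ the scales in $]t_{j+1},s_j]$ are skipped precisely because $\bE(T,\B_{6\sqrt m\,r})>\eps_3^2$ there; the stopping cube of $I_j$ lives at scale $\sim s_j$, which in the rescaling by $t_{j+1}$ may sit far outside $\B_3$, and no center manifold (old or new) is effective on the intermediate range, so there is nothing to ``compare over a common range of scales.'' The paper handles $(B)$ by an entirely different argument: by maximality of $t_{j+1}$ the excess exceeds the threshold at a radius just above $6\sqrt m\,t_{j+1}$; extracting a subsequence of class-$(B)$ junctions converging to a tangent cone, whose excess is scale-invariant, one gets $\liminf_{j\in(B)}\bE(T_j,\B_3)\geq\eps_3^2$, and then the splitting phenomenon (in the form of \cite[Lemma~5.2]{DS5}: excess bounded below forces $\int|N_j|^2$ at scale $3$ bounded below) yields $\liminf_{j\in(B)}\bH_j(3)>0$, which combined with $\bD_j(3)\leq CE^j\leq C\eps_3^2$ bounds $\bI_j(3)$. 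Without this compactness/cone step your argument cannot close whenever $t_{j+1}<s_j$, which can happen for infinitely many $j$.

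Two further remarks. First, your worry about constants ``accumulating along the chain of intervals'' is a red herring and signals that your cross-interval step is formulated the wrong way: the paper does not prove a recursion of the type $\bI_{j+1}(3)\leq C(1+\bI_j(\rho))$; at each junction it derives afresh a lower bound $\bH_j(3)\gtrsim E^j$ (case $(A)$, by transferring the $L^2$ separation, which is a property of the current $T$ itself, to the new normal approximation: $\int_{\B_2\cap\cM_j}|N_j|^2\geq \bar c_s E^j$) or $\bH_j(3)\gtrsim c$ (case $(B)$), paired with the upper bound $\bD_j(3)\leq CE^j$, so nothing is propagated inductively and no accumulation can occur. A frequency-comparison inequality between two different center manifolds is nowhere established in the paper and would itself need proof. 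Second, your positivity argument for $\bH_j$ rests on a ``unique continuation'' principle for the almost-stationary $Q$-valued map $N_j$, which is not available in this setting (unique continuation is delicate even for Dir-minimizing $Q$-valued functions) and is not how the positivity is obtained in \cite{DS5}; this part of your proposal should not be presented as a proof.
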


\begin{proof}[Sketch of the proof]
We partition the extrema
$t_j$ of the intervals of  flattening into two different classes:
\begin{itemize}
\item[$(A)$] those such that $t_j = s_{j-1}$;
\item[$(B)$] those such that $t_j<s_{j-1}$.
\end{itemize}
If $t_j$ belongs to $(A)$, set $r:= \frac{s_{j-1}}{t_{j-1}}$. Let $L \in \sW^{(j-1)}$ be
a cube of the Whitney decomposition such that $c_s\, r \leq \ell(L)$ and 
$L\cap \bar B_r(0, \pi) \neq \emptyset$.
Since this cube of the Whitney decomposition at step $j-1$ has
size comparable with the distance to the origin, and the next center manifold starts
at a comparable radius, the splitting property of the normal approximation needs to
hold also for the new approximation: namely, one can show that
there exists a constant $\bar c_s >0$ such that
\[
\int_{\B_2\cap \cM_j} |N_j|^2 
\geq \bar{c}_s E^j:=\bE (T_j, \B_{6\sqrt{m}}),
\]
which obviously gives $\bH_{N_j} (3) \geq c E^j$, and than
$\bI_{N_j} (3)$ is smaller than a given constant, independent of $j$, thus
proving the theorem.

In the case $t_j$ belongs to the class $(B)$, 
then, by construction there is $\eta_j \in ]0,1[$ such that
$\bE ((\iota_{0, t_j})_\sharp T, \B_{6\sqrt{m}(1+\eta_j)}) > \eps_3^2$.
Up to extracting a subsequence, we can assume that $(\iota_{0, t_j})_\sharp T$
converges to a cone $S$: the convergence is strong enough to conclude that
the excess
of the cone is the limit of the excesses of the sequence. Moreover
(since $S$ is a cone), the excess $\bE (S, \B_r)$ is independent of $r$.
We then conclude 
\[
\eps_3^2 \leq  \liminf_{j\to\infty, j\in (B)} \bE (T_j, \B_3)\, .
\]
Thus, it follows again from the splitting phenomenon
(see for details \cite[Lemma~5.2]{DS5}) that
$\liminf_{j\to\infty, j\in (B)} \bH_{N_j} (3) > 0$. Since $\bD_{N_j}(3) \leq C E^j\leq C\eps_3^2$, we achieve that $\limsup_{j\to\infty, j\in (B)} \bI_{N_j} (3)>0$, and conclude as before.
\end{proof}

%
%
\section{Final blowup argument}

We are now ready for the conclusion of the blowup argument, i.e.~for
the discussion of steps (F) and (G) of \S~\ref{ss:sketch}.

To this aim we recall here the main results obtained so far.

We start with an $m$-dimensional area minimizing integer
rectifiable $T$ in $\R^{m+n}$ with $\de T = 0$ and $0\in \rD_Q (T)$,
such that there exists a sequence of radii $r_k\downarrow 0$
satisfying
\begin{gather}
\lim_{k\to+\infty}\bE(T_{0,r_k}, \B_{10}) = 0,\label{e:seq1bis}\\
\lim_{k\to+\infty} \cH^{m-2+\alpha}_\infty (\rD_Q (T_{0,r_k}) \cap \B_1) > \eta>0,\label{e:seq2bis}\\
\cH^m \big((\B_1\cap \supp (T_{0, r_k}))\setminus \rD_Q (T_{0,r_k})\big) > 0\quad \forall\; k\in\N,\label{e:seq3bis}
\end{gather}
for some constant $\alpha, \eta>0$.
In the process of solving the centering problem for such currents we have
obtained the following:
\begin{enumerate}
\item the intervals of flattening $I_j = ]s_j, t_j]$,
\item the center manifolds $\cM_j$,
\item the $\cM_j$-normal approximations $N_j : \cM_j \to \Iq(\R^{m+n})$,
\end{enumerate}
satisfying the conclusions of Theorem~\ref{t:cm} and Theorem~\ref{t:approxN}.
It follows from the very definition of intervals of flattening that each
$r_k$ has to belong to one of these intervals.
Therefore, in order to fix the ideas and to simplify the notation,
we will in the sequel assume that there are infinitely many intervals
of flattening and that $r_k \in I_k$: note that this is not a serious
restriction, and everything holds true also in the case of finitely many
intervals of flattening.

By the analysis of the order of contact and the estimate on the
frequency function, see Theorem~\ref{t:frequency} and Theorem~\ref{t:boundedness},
we have also derived the information
\begin{equation}\label{e:Ifinale}
\sup_{j\in \N} \sup_{r \in \left]\frac{s_j}{t_j},3\right]} \bI_j(r) <+\infty.
\end{equation}
The ultimate consequence of this estimate, thus
clarifying the discussion about the non-triviality of the blowup
process, is the following proposition.

\begin{proposition}[Reverse Sobolev]\label{c:rev_Sob}
There exists a constant $C>0$ with this property: for every $j\in \N$,
there exists $\theta_j\in \left]\frac{3\,r_j}{2\,t_j}, 3\frac{r_j}{t_j}\right[$
such that
\begin{equation}\label{e:rev_Sob}
\int_{\cB_{\theta_j}(\Phii_j(0))} |D{N}_j|^2 \leq C\left(\frac{t_j}{r_j}\right)^2
\int_{\cB_{\theta_j} (\Phii_j(0))} |{N}_j|^2\, .  
\end{equation}
\end{proposition}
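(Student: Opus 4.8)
The plan is to obtain \eqref{e:rev_Sob} as a soft consequence of the uniform frequency bound \eqref{e:Ifinale} and of the almost--monotonicity identities of Proposition~\ref{p:variation}. Fix $j$, set $\rho:=\frac{r_j}{t_j}\in\,]\frac{s_j}{t_j},1]$, $q:=\Phii_j(0)$, let $\Lambda:=\sup_k\sup_{r\in\,]s_k/t_k,3]}\bI_k(r)<\infty$, and introduce the ``honest'' ball quantities $D(t):=\int_{\cB_t(q)}|DN_j|^2$ and $\Sigma(t):=\int_{\cB_t(q)}|N_j|^2$. Unwinding the definitions of $\bD_j,\bH_j$ and of the explicit profile $\phi$ gives at once the two elementary comparisons
\[
D(t)\le\bD_j(2t),\qquad \bH_j(r)\le\frac{C}{r}\,\Sigma(r);
\]
here the first holds because $\phi\equiv1$ on $\cB_t(q)$ once rescaled by $2t$, the second because $-\phi'(d_j/r)/d_j\le 4/r$ on $\cB_r\setminus\cB_{r/2}$.

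First I would prove a doubling estimate $\Sigma(2a)\le C_1\,\Sigma(a)$, valid for all $a$ with $a,2a\in\,]\frac{s_j}{t_j},3]$ and $C_1$ independent of $j$. In the main regime $\bI_j\ge1$ on the interval under consideration (in the complementary regime $N_j$ is essentially flat and \eqref{e:rev_Sob} is even easier; I treat it separately), \eqref{e:out} together with the crude bounds $\bD_j(r)\le CE^j r^{m+2-2\delta_2}\le C\eps_3^2$ and $\bSigma(r)\le Cr^2\bD_j(r)$ (the latter from \eqref{e:Sigma1}) gives $\bE(r)=r\bD_j(r)\bigl(1+O(\eps_3^{\gamma_3})\bigr)$, whence $\frac{2}{r}\bE(r)=\frac{2\bI_j(r)}{r}\bH_j(r)\bigl(1+o(1)\bigr)$; substituting into \eqref{e:H'} yields $\frac{d}{dr}\log\bH_j(r)\le\frac{m-1+3\Lambda}{r}+C$ on $]\frac{s_j}{t_j},3[$, and integrating over $[a,2a]$ gives $\bH_j(2a)\le C_0\bH_j(a)$. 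Since \eqref{e:L2_pieno2} and $\bI_j\le\Lambda$ also force the reverse bound $\bH_j(r)\ge c\,r^{-1}\Sigma(r)$, combining it with the upper comparison above turns the doubling of $\bH_j$ into the desired doubling of $\Sigma$.

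Finally I would choose $\theta_j$ in the prescribed window. Since $\rho\le1$, one can take $\theta_j\in\,]\frac{3\rho}{2},3\rho[$ close enough to $\frac{3\rho}{2}$ that $2\theta_j\le3$; then $[\theta_j,2\theta_j]\subset\,]\frac{s_j}{t_j},3]$ automatically (using $r_j>s_j$), so all the estimates above apply — the degenerate case $r_j=t_j$ being absorbed into the closed endpoint $r=3$. Using the two comparisons, the identity $\bD_j(r)=\frac{\bI_j(r)}{r}\bH_j(r)$ at $r=2\theta_j$, the bound $\bI_j\le\Lambda$, and the doubling,
\[
D(\theta_j)\ \le\ \bD_j(2\theta_j)\ =\ \frac{\bI_j(2\theta_j)}{2\theta_j}\,\bH_j(2\theta_j)\ \le\ \frac{C\Lambda}{\theta_j^{2}}\,\Sigma(2\theta_j)\ \le\ \frac{C\Lambda C_1}{\theta_j^{2}}\,\Sigma(\theta_j)\ \le\ C\Bigl(\tfrac{t_j}{r_j}\Bigr)^{2}\Sigma(\theta_j),
\]
the last step using $\theta_j>\frac{3\rho}{2}$; this is \eqref{e:rev_Sob} with $C$ independent of $j$. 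The hard part is the doubling estimate of the second paragraph: it rests on the full first--variation package of Proposition~\ref{p:variation}, the technical heart of this part of the proof, and it demands careful accounting of the repeated factor--$2$ rescalings and of the exact range of radii on which each estimate is licensed — which is precisely why $\theta_j$ must be selected near the lower end of $]\frac{3\rho}{2},3\rho[$ — together with the separate handling of the low--frequency regime.
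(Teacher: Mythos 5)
Your overall skeleton (compare the honest ball quantities with the averaged ones, then use the uniform frequency bound plus a doubling estimate for $\Sigma$) is a genuinely different route from the paper's, and much of it is sound, but as written it has two concrete gaps. First, the low‑frequency regime is not ``even easier'' and cannot be waved off: every ingredient your doubling rests on is only licensed under $\bI\geq 1$. Proposition~\ref{p:variation} is stated under that hypothesis, and so is the Poincar\'e inequality \eqref{e:L2_pieno2}, which you need twice — once to absorb $\bSigma(r)$ in \eqref{e:out} and once for the reverse bound $\bH_j(r)\geq c\,r^{-1}\Sigma(r)$. If $\bI_j$ dips below $1$ somewhere in $[\theta_j,2\theta_j]$ (nothing in \eqref{e:Ifinale} or Theorem~\ref{t:boundedness} excludes this; the paper's own proof of Theorem~\ref{t:frequency} explicitly restricts to the set where $\bI>1$), the inequality $\bD(2\theta_j)\leq\frac{C}{(2\theta_j)^2}\Sigma(2\theta_j)$ still leaves you needing to pass from $\Sigma(2\theta_j)$ to $\Sigma(\theta_j)$ — i.e.\ exactly the doubling you can no longer prove. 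Second, the endpoint case $r_j=t_j$ (which is allowed, since $r_k\in I_k=]s_k,t_k]$) really breaks the scheme: then $\theta_j\in\,]\frac32,3[$ forces $2\theta_j>3$, so $\bD_j(2\theta_j)$, $\bH_j(2\theta_j)$, $\bI_j(2\theta_j)$ are outside the range $]s_j/t_j,3]$ on which they are defined and controlled, and ``absorbing into the closed endpoint $r=3$'' does not produce an admissible $\theta_j$. (This particular defect is repairable — e.g.\ using $\phi(d/(\lambda t))\geq 2-\frac{2}{\lambda}$ on $\cB_t$ for $\lambda\in\,]1,2[$, so that a dilation factor slightly below $2$ suffices — but the argument as stated fails there.)

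For comparison, the paper's proof avoids both issues and is considerably softer: by \eqref{e:Hmedia}--\eqref{e:Dmedia}, $\bD_j(3\rho)$ and $\bH_j(3\rho)$ (with $\rho=r_j/t_j$, so $3\rho\leq 3$) are themselves averages over $t\in[\frac{3\rho}{2},3\rho]$ of $\int_{\cB_t}|DN_j|^2$ and of $\frac1t\int_{\partial\cB_t}|N_j|^2$; the bound $\bI_j(3\rho)\leq C$ and a mean‑value selection then yield a single $\theta_j$ in the prescribed window with $\int_{\cB_{\theta_j}}|DN_j|^2\leq\frac{C}{\theta_j}\int_{\partial\cB_{\theta_j}}|N_j|^2$, and the boundary integral is converted into a bulk integral on the \emph{same} ball by integrating $|N_j|$ along geodesic radii and absorbing the resulting $\int|N_j|\,|DN_j|$ term via Young's inequality. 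No doubling, no lower bound on the frequency, and no case analysis at $\rho=1$ are needed; only the upper frequency bound at the single radius $3\rho$ enters. If you want to keep your doubling route, you must (i) prove the doubling (or the needed substitutes for \eqref{e:out} and \eqref{e:L2_pieno2}) without assuming $\bI\geq 1$, and (ii) rework the dilation factor so that all radii used stay in $]s_j/t_j,3]$ even when $r_j=t_j$.
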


\begin{proof} 
Set for simplicity $r:= \frac{r_j}{t_j}$ and drop the subscript $_j$ in the sequel.
Using \eqref{e:Hmedia}, \eqref{e:Dmedia}
and \eqref{e:Ifinale}, there exists $C>0$ such that
\begin{align*}
\int_{\frac32 r}^{3r} dt \int_{\cB_t(\Phii(0))} |D{N}|^2 &= \frac32 r\,\bD (3r) \leq
C \,\bH (3r)\\
&= C \int_{\frac32 r}^{3r} dt
\frac{1}{t} \int_{\partial \cB_t(\Phii(0))} |{N}|^2\, .  
\end{align*}
Therefore, there must be $\theta\in [\frac{3}{2}r, 3r]$ satisfying
\begin{equation}\label{e:un_raggio_buono}
\int_{\cB_{\theta}(\Phii(0))} |D{N}|^2 \leq \frac{C}{\theta} \int_{\partial \cB_{\theta}(\Phii(0))} |{N}|^2\, .
\end{equation}
This is almost the desired estimate.
In oder to replace the boundary integral with a bulk integral in the
right hand side of \eqref{e:un_raggio_buono}, we argue by integrating along radii
in a similar way to the case of single valued functions.
Fix indeed any $\sigma \in ]\theta/2, \theta[$ and any point $x\in \partial \cB_\theta (\Phi(0))$. Consider the geodesic line $\gamma$
passing through $x$ and $\Phi (0)$, and let $\hat{\gamma}$ be the arc on $\gamma$ having one endpoint $\bar{x}$ in $\partial \cB_\sigma (\Phi (0))$ and one endpoint equal to $x$. Using \cite[Proposition 2.1(b)]{DS1} and the fundamental theorem of calculus, we easily conclude
\[
|N (x)| \leq |N (\bar x)| + \int_{\hat{\gamma}} |DN| |N|\, .
\]
Integrating this inequality in $x$ and recalling that $\sigma > s/2$ we then easily conclude
\[
\int_{\partial \cB_\theta (\Phi (0))} |N|^2 \leq C \int_{\partial \cB_\sigma (\Phi (0))} |N|^2 + C \int_{\cB_\theta (\Phi (0))} |N|\,|DN|\, .
\]
We further integrate in $\sigma$ between $s/2$ and $s$ to achieve
\begin{align}\label{e:young}
\theta \int_{\partial \cB_{\theta}(\Phii(0))} |{N}|^2 & \leq C
 \int_{\cB_{\theta}(\Phii(0))} \left(|{N}|^2 + \theta\,|{N}|\, |D{N}|\right)\notag\\
&\leq \frac{\theta^2}{2\,C} \int_{\cB_{\theta}(\Phii(0))} |D{N}|^2 + C \int_{\cB_{\theta}(\Phii(0))} |{N}|^2\, .
\end{align}
Combining \eqref{e:young} with  \eqref{e:un_raggio_buono}
we easily conclude \eqref{e:rev_Sob}.
\end{proof}

\subsection{Convergence to a Dir-minimizer}
We can now define the final blowup sequence, because the Reverse Sobolev
inequality proven in Proposition~\ref{c:rev_Sob} gives the right radius $\theta_k$ for
assuring compactness of the corresponding maps.
To this aim set $\bar{r}_k := \frac{2}{3} \theta_k\,t_{k} \in [r_k, 2\,r_k]$,
and rescale the current and the maps accordingly:
\[
\bar T_k := (\iota_{0,\bar r_k})_\sharp T\quad \text{and}\quad
\bar\cM_k := \iota_{0,\bar r_k/t_k} \cM_k,
\]
and $\bar{N}_k : \bar\cM_k \to \R^{m+n}$ for the rescaled $\bar\cM_k$-normal approximations
given by 
\[
\bar{N}_k (p) := \frac{t_k}{\bar{r}_k} N_{k} \left(\frac{\bar{r}_k p}{t_k}\right).
\]
Note that the ball $\cB_{s_k} \subset \cM_k$
is sent into the ball $\cB_{\frac{3}{2}} \subset \bar\cM_k$.
Moreover, via some elementary regularity theory of
area minimizing currents, one deduces that
\begin{enumerate}
\item $\bE(\bar T_k, \B_{\frac12}) \leq C\bE(T, \B_{r_k}) \to 0$;

\item $\bar T_k$ locally converge (and in the Hausdorff sense for what
concerns the supports) to an $m$-plane with multiplicity $Q$;

\item $\bar \cM_k$ locally converge to
the flat $m$-plane (without loss of generality $\pi_0$);

\item recalling \eqref{e:seq2bis},
\begin{align}\label{e:sing grande}
\cH^{m-2+\alpha}_\infty (\rD_Q (\bar{T}_k) \cap \B_1)
&\geq \eta'>0\, ,
\end{align}
for some positive constant $\eta'$.
\end{enumerate}
We can then consider the following definition for the 
blow-up maps
\[
N^b_k : B_{3}\subset\R^m \to \Iq (\R^{m+n})
\]
given by
\begin{equation}\label{e:sospirata_successione}
N^b_k (x) := \bh_k^{-1} \bar{N}_k (\be_k (x))\, , \quad \text{with }
\bh_k:=\|\bar N_k\|_{L^2(\cB_{\frac32})},
\end{equation}
where $\be_k: B_3\subset\R^m\simeq T_{\bar{p}_k} \bar\cM_k\to \bar\cM_k$ denotes the exponential map
at $\bar{p}_k = t_k\,\Phii_{k} (0)/\bar{r}_k$.

Proposition~\ref{c:rev_Sob} implies then 
that there exists a constant $C>0$ such that, for every $k$,
\begin{equation}\label{e:rev_Sob2}
\int_{B_{\frac32}} |D{N}^b_k|^2 \leq C. 
\end{equation}
Moreover, as a simple consequence of Theorem~\ref{t:approxN} (details left to the readers),
we find an exponent $\gamma>0$ such that
\begin{gather}
\Lip (\bar{N}_k) \leq C \bh_k^\gamma,\label{e:Lip_riscalato}\\
\mass((\bT_{\bar{F}_k} - \bar{T}_k) \res (\p_k^{-1} (\cB_\frac32)) \leq C \bh_k^{2+2\gamma},\label{e:errori_massa_1000}\\
\int_{\cB_\frac32} |\etaa\circ \bar{N}_k| \leq C \bh_k^2\label{e:controllo_media}\, .
\end{gather}
It then follows from \eqref{e:rev_Sob2}, $\|N_k^b\|_{L^2(\cB_{3/3})} \equiv 1$ and the
Sobolev embedding for $Q$-valued functions (cp.~\cite[Proposition~2.11]{DS1}) that
up to subsequences (as usual not relabeled) there exists a Sobolev function
$N_\infty^b: B_{\frac32} \to \Iq (\R^{m+n})$
such that the maps $N^b_k$ converge strongly in $L^2(B_\frac32)$ to
$N^b_\infty$.
Then from \eqref{e:controllo_media} we deduce also that
\begin{gather}
\etaa \circ N^b_\infty \equiv 0
\quad\text{and}\quad
\|N_k^b\|_{L^2(\cB_{3/3})} \equiv 1. 
\end{gather}
Moreover, since the $\bar N_k$ are $\bar \cM_k$-normal approximations and
the $\bar\cM_k$ converging to the flat $m$-dimensional plane $\R^m \times\{0\}$,
$N^b_\infty$ takes values
in the space of $Q$-points of $\{0\}\times \R^n$ (in place of the full
$\R^{m+n}$).

To conclude our contradiction argument, we need to prove
the $N^b_\infty$ is Dir-minimizing.

\subsubsection{$N_\infty^b$ is Dir-minimizing}
Apart from the necessary technicalities, the proof of this claim is very intuitive and
relies on the following observation: if the energy of $N_\infty^b$ could be decreased,
then one would be able to find a rectifiable current with less mass then $\bar T_k$, because
the rescaling of $N_k^b$ are done in terms of the $L^2$ norm $\bh_k$ whereas
the errors in the normal approximation are superlinear with $\bh_k$.

Next we give all the details for this arguments.

\medskip

We can consider for every $\bar\cM_k$ an orthonormal frame of $(T\bar\cM_k)^\perp$, 
\[
\nu^k_1, \ldots, \nu^k_{{n}},
\]
with the property (cf.~\cite[Lemma~A.1]{DS2}) that
\[
\nu^k_j \to e_{m+j}  \quad \mbox{in $C^{2,\kappa/2}(\bar \cM_k)$ as $k\uparrow\infty$ for every $j$}
\]
(here $e_1, \ldots , e_{m+n}$ is the standard basis of $\mathbb R^{m+n}$).

Given now any $Q$-valued map 
$u = \sum_i\a{u_i}: \bar{\cM}_k \to \Iq (\{0\}\times \R^{n})$,
we can consider the map
\[
\mathbf{u}_k: x\mapsto \sum_i \a{ (u_i(x))^j \nu_j^k (x)},
\]
where we set $(u_i)^j:= \langle u_i (x), e_{m+j}\rangle$  and we 
use Einstein's convention.
Then, the differential map
$D\mathbf{u}_k := \sum_i \a{D(\mathbf{u}_k)_i}$ is given by
\begin{align*}
D(\mathbf{u}_k)_i &= D(u_i)^j  \nu_j^k + (u_i)^j D \nu_j^k . 
\end{align*}
Taking into account that $\|D\nu_i^k\|_{C^0} \to 0$
as $k\to +\infty$, we deduce that
\begin{align}
\left|\int \big(|D \mathbf{u}_k|^2 - |D u|^2\big)\right| 
\leq  o(1) \int \left(|u|^2+|Du|^2\right)\, .\label{e:stimozza}
\end{align}
Note that $N_k^b$ has also the form $\mathbf{u}_k^b$ for some
$Q$-valued function $u_k^b : \bar{\cM}_k \to \Iq (\{0\}\times \R^{n})$.

\medskip

We now show the $\D$-minimizing property of $N^b_\infty$.
There is nothing to prove if its Dirichlet energy vanishes.
We can therefore assume that there exists $c_0>0$ such that
\begin{equation}\label{e:reverse_control}
c_0 \bh_k^2 \leq \int_{\cB_\frac32} |D\bar{N}_k|^2\, .
\end{equation}
We argue by contradiction and assume there is a radius $t \in \left]\frac54,\frac32\right[$ and a 
function $f: B_\frac32 \to \Iq(\{0\} \times \R^{ n})$ such that
\[
f\vert_{B_\frac32\setminus B_t} = N^b_\infty\vert_{B_\frac32 \setminus B_t} \quad\text{and}\quad
\D(f, B_t) \leq \D(N^b_\infty, B_t) -2\,\delta,
\]
for some $\delta>0$.

Using $f$ as a model, we need to find a sequence of functions
$v_k^b$ such that they have the same boundary data of $N_k^b$ and less energy.
This can be done because of the strong convergence of the traces and the
possibility to make an interpolation between two functions with
close by traces.
This is one of the instances where thinking to multiple valued functions
as classical single valued ones may be useful.
In any case, the details are given in \cite[Proposition~3.5]{DS3} and 
lead to competitor functions $v^b_k$ such that,
for $k$ large enough,
\begin{gather*}
v^b_k\vert_{\de B_r} = N^b_k\vert_{\de B_r}, \quad
\Lip (v^b_k) \leq C \bh_k^\gamma,
\\
\int_{B_\frac32} |\etaa\circ v^b_k| \leq C \bh_k^2\quad\text{and}\quad
\int_{B_\frac32} |Dv^b_k|^2 \leq \int |DN^b_k|^2 - \delta \, \bh_k^2\, ,
\end{gather*}
where $C>0$ is a constant independent of $k$. 
Clearly, setting $\tilde{N}_k = v^b_k  \be_k^{-1}$ satisfy 
\begin{gather*}
\tilde{N}_k \equiv \bar{N}_k \quad \mbox{in $\cB_\frac32\setminus\cB_t$},\quad
\Lip (\tilde{N}_k) \leq C \bh_k^\gamma,\\
\int_{\cB_\frac32} |\etaa\circ \tilde{N}_k| \leq C \bh_k^2
\quad\text{and}\quad
\int_{\cB_\frac32} |D\tilde{N}_k|^2 \leq \int_{\cB_\frac32} |D\bar{N}_k|^2 - \delta \bh_k^2.
\end{gather*}

\medskip

Consider finally the map
$\tilde{F}_k (x) = \sum_i \llbracket x+\tilde{N}_i (x)\rrbracket$. The current $\bT_{\tilde{F}_k}$ coincides with 
$\bT_{\bar{F}_k}$ on $\p_k^{-1}( \cB_\frac32\setminus \cB_t)$.
Define the function $\varphi_k(p) = \dist_{\bar{\cM}_k} (0, \p_k (p))$
and consider for each $s\in \left]t,\frac32\right[$ the slices 
$\langle \bT_{\tilde{F}_k} - \bar{T}_k, \varphi_k, s\rangle$.
By \eqref{e:errori_massa_1000} we have
\[
\int_t^\frac32 \mass (\langle \bT_{\tilde{F}_k} - \bar{T}_k, \varphi_k, s\rangle) \leq C \bh_k^{2+\gamma}\, .
\]
Thus we can find for each $k$ a radius $\sigma_k\in \left]t, \frac32\right[$ on which 
$\mass (\langle \bT_{\tilde{F}_k} - \bar{T}_k, \varphi_k, \sigma_k\rangle) \leq C \bh_k^{2+\gamma}$. By the isoperimetric
inequality (see \cite[Remark 4.3]{DS3}) there is a current $S_k$ such that
\[
\partial S_k = 
\langle \bT_{\tilde{F}_k} - \bar{T}_k, \varphi_k, \sigma_k\rangle,
\quad \mass (S_k) \leq C \bh_k^{(2+\gamma)m/(m-1)}.
\]
Our competitor current is, then, given by
\[
Z_k := \bar{T}_k\res (\p_k^{-1} (\bar\cM_k\setminus \cB_{\sigma_k})) + S_k + \bT_{\tilde{F}_k}\res
(\p_k^{-1} (\cB_{\sigma_k})).
\]
Note that $Z_k$ has the same boundary as 
$\bar{T}_k$. On the other hand, by \eqref{e:errori_massa_1000} and the bound on $\mass (S_k)$, we have
\begin{equation}\label{e:perdita_massa}
\mass (\tilde{T}_k) - \mass (\bar{T}_k) \leq \mass (\bT_{\bar F_k}) - \mass (\bT_{\tilde{F}_k}) + C \bh_k^{2+2\gamma}\, .
\end{equation}
Denote by $A_k$ and by $H_k$ respectively the second fundamental forms and mean curvatures of the
manifolds $\bar\cM_k$. Using the Taylor expansion
of \cite[Theorem~3.2]{DS2}, we achieve
\begin{align}
 \mass (\tilde{T}_k) - \mass (\bar{T}_k) &\leq \frac{1}{2} \int_{\cB_\rho} \left(|D\tilde{N}_k|^2 - |D\bar{N}_k|^2\right)\notag\\ 
&\qquad+ C \|H_k\|_{C^0} \int \left(|\etaa\circ \bar{N}_k| + |\etaa\circ \tilde{N}_k|\right)\nonumber\\
&\qquad + \|A_k\|_{C^0}^2 \int \left( |\bar{N}_k|^2 + |\tilde{N}_k|^2\right) + o (\bh_k^2)\notag\\
&\leq -\frac{\delta}{2} \bh_k^2 + o (\bh_k^2)\, .\label{e:fine!!}
\end{align}
Clearly, \eqref{e:fine!!} and \eqref{e:perdita_massa}
contradict the minimizing property of $\bar{T}_k$ for $k$ large enough and 
this concludes the proof.

\subsection{Persistence of singularities}
We discuss step (G) of \S~\ref{ss:sketch}:
we show that the assumptions \eqref{e:seq2bis} and \eqref{e:seq3bis}
contradict Theorem~\ref{t:Qvalued}, which asserts that the singular
set of $N_\infty^b$ has $\cH^{m-2+\alpha}$ measure zero.

\medskip

Set
\[
\Upsilon := \left\{x\in \bar B_{1} : N^b_\infty(x) = Q\a{0}\right\},
\]
and note that, since $\etaa \circ N^b_\infty\equiv 0$ and
$\| N^b_\infty\|_{L^2(B_{\frac{3}{2}})}= 1$, from Theorem~\ref{t:Qvalued}
it follows that $\cH^{m-2+\alpha}_\infty (\Upsilon) = 0$.

The main line of the contradiction argument can be summarized in three steps.

\begin{enumerate}
\item By \eqref{e:seq2bis} and \eqref{e:seq3bis} there exists a set $\Lambda_k \subset \D_Q(\bar T_k)$
such that 
\[
\dist(\Lambda_k, \Upsilon) > c_1>0 \quad \text{and}\quad
\cH^{m-2+\alpha}_\infty(\Lambda_k)>c_2>0,
\]
for suitable constants $c_1,c_2>0$.

The key aspect of the set $\Lambda_k$ is the following:
by the H\"older regularity of Dir-minimizing functions
in Theorem~\ref{t:Qvalued}, the normal approximation $\bar N_k$ must
be big in modulus around any point in $\Lambda_k$.

\item Moreover, it follows from the Lipschitz approximation Theorem~\ref{t:approx} (see Theorem~\ref{t:persistence} below
that around any multiplicity $Q$ point 
of the current the energy of the Lipschitz approximation
is large enough with respect to the $L^2$ norm (cp.~\cite[Theorem~1.7]{DS3}).
This is what we call \textit{persistence of $Q$-point} phenomenon, and is in fact the
analytic core of this part of the proof.

We moreover stress that this part of the proof (even if 
it is not apparent from our exposition)
also uses the \textit{splitting-before-tilting}
estimates.

\item Putting together the previous two steps, we then conclude that
there is a big part of the current where the energy of the Lipschitz approximation
is large enough: matching the constant in the previous estimates, one realizes that
this cannot happen on a set of positive $\cH^{m-2+\alpha}$ measure.
\end{enumerate}

As usual, the actual proof is much more involved of the heuristic scheme above.
In the following we try to give some more explanations, referring
to \cite{DS3,DS4,DS5} for the detailed proof.

\medskip

\textit{Step (1).} 
We cover $\Upsilon$ by balls $\{\B_{\sigma_i} (x_i)\}$ in such a way that
\[
\sum_i \omega_{m-2+\alpha} (4\sigma_i)^{m-2+\alpha} \leq 
\frac{\eta'}{2},
\]
where $\eta'>0$ is the constant in \eqref{e:sing grande}.
By the compactness of $\Upsilon$, such a covering can be chosen finite.
Let $\sigma>0$ be a radius whose specific choice will be given only at the very end,
and such that $0 < 40 \, \sigma \leq \min \sigma_i$.
Denote by $\Lambda_{k}$ the set of $Q$ points of $\bar T_k$ far away from the singular set
$\Upsilon$:
\[
\Lambda_{k} := \big\{ p \in D_Q(\bar T_k) \cap \B_1 : \dist(p, \Upsilon)>4\min \sigma_i \big\}.
\]
Clearly, $\cH^{m-2+\alpha}_\infty (\Lambda_k)
\geq \frac{\eta'}{2}$. Let $\mathbf{V}$ denote the neighborhood of $\Upsilon$ of size $2\min \sigma_i$.
By the H\"older continuity of $\D$-minimizing
functions in Theorem~\ref{t:Qvalued} (ii), there is a positive constant $\vartheta>0$ such that
$|N^b_\infty (x)|^2\geq 2 \vartheta$ for every $x\not\in \mathbf{V}$. 
It then follows that
\[
 2 \,\vartheta \leq \mint_{B_{2\sigma} (x)} |N^b_\infty|^2  \qquad \mbox{$\forall\;x\in B_{\frac54}$
with  $\dist (x, \Upsilon) \geq 3\min \sigma_i$,}
\]
and therefore, for sufficiently large $k$'s,
\begin{equation}\label{e:dall'alto}
\vartheta \, \bh_k^2 \leq \mint_{\cB_{2\sigma} (x)} \cG (\bar{N}_k, Q \a{\etaa\circ \bar{N}_k})^2,
\end{equation}
for all $x\in \Gamma_k:= \p_{\bar\cM_k} (\Lambda_k)$.
This is the claimed lower bound on the modulus of $\bar N_k$.

\medskip

\textit{Step (2).}
This is the most important step of the proof.
We start introducing the following notation.
For every $p\in \Lambda_k$, consider $\bar{z}_k (p) = \p_{\pi_0} (p)$ and 
$\bar{x}_k (p) := \bar \Phii_k \in \bar\cM_k$, where $\bar \Phii_k$ is the induced
parametrization.

The key claim is the following:
there exists a geometric constant $c_0>0$ (in particular, 
independent of $\sigma$) such that, when $k$ is large enough,
for each $p\in \Lambda_k$ there is a radius $\varrho_p \leq 2\sigma$ with the following properties:
\begin{gather}
\frac{c_0\, \vartheta}{\sigma^{\alpha}}  \bh_k^2\leq \frac1{\varrho_p^{m-2+\alpha}}
\int_{\cB_{\varrho_p} (\bar{x}_k (p))} |D\bar{N}_k|^2,
\label{e:dal_basso_finale}\\
\cB_{\varrho_p} (\bar{x}_k (p)) \subset \B_{4\varrho_p} (p)\label{e:inside}\, .
\end{gather}

We show here the main heuristics leading to \eqref{e:dal_basso_finale}
(and we warn the reader that these are not the complete arguments), referring to \cite{DS5}
for \eqref{e:inside}.
The key estimate in this regard is the following:
there exists a constant $\bar s <1$ such that
\begin{equation*}\label{e:sotto_dal_cm}
\mint_{\cB_{\bar{s} \ell (L_k)} (x_k)} \cG (N_{j(k)}, Q \a{\etaa\circ N_{j(k)}})^2 
\leq \frac{\vartheta}{4 \omega_m \ell (L_k)^{m-2}} \int_{\cB_{\ell (L_k)} (x_k)} |DN_{j(k)}|^2\,,
\end{equation*}
that is, rescaling to $\bar\cM_k$,
there exists $t (p) \leq \bar \ell_k$ such that
\begin{equation}\label{e:sotto_dal_cm_2}
\mint_{\cB_{\bar{s} t (p)} (\bar{x}_k) (p)} \cG (\bar{N}_k, Q \a{\etaa\circ \bar{N}_k})^2 \leq 
\frac{\vartheta}{4 \omega_m t (p)^{m-2}} \int_{\cB_{t (p)} (\bar{x}_k (p))} |D\bar{N}_k|^2\, .
\end{equation}
We show that we can choose $\varrho_p \in ]\bar s\, t (p), 2\sigma[$
such that \eqref{e:dal_basso_finale} follows from \eqref{e:sotto_dal_cm_2}.
To this aim we can distinguish two cases.
Either
\begin{equation}\label{e:alto_rho_x}
\frac{1}{\omega_m t (p)^{m-2}} \int_{\cB_{t (p)} (\bar{x}_k (p))} |DN_k|^2 \geq \bh_k^2\,,
\end{equation}
and \eqref{e:dal_basso_finale} follows with $\varrho_p = t (p)$.
Or \eqref{e:alto_rho_x} does not hold, and we argue as follows.
We use first
\eqref{e:sotto_dal_cm_2} to get
\begin{equation}\label{e:L2_basso}
 \mint_{\cB_{\bar{s} t (p)} (\bar{x}_k (p))} \cG (\bar{N}_k, Q \a{\etaa\circ \bar{N}_k})^2
\leq \frac{\vartheta}{4} \bh_k^2\, . 
\end{equation}
Then, we show by contradiction that there exists a radius $\varrho_y \in [\bar s t (p), 2\sigma]$
such that \eqref{e:dal_basso_finale} holds. Indeed, if this were not the case, 
setting for simplicity $f:= \cG (\bar{N}_k, Q\a{\etaa\circ \bar{N}_k})$
and letting $j$ be the smallest integer such that
$2^{-j} \sigma \leq \bar{s} t (p)$, we can estimate as follows
\begin{align}
\mint_{\cB_{2\sigma} (\bar{x}_k (p))} f^2
\leq\;& 2 \mint_{\cB_{\bar{s} t (p)} (\bar{x}_k (p))} 
f^2
+ \sum_{i=0}^{j} \left(\mint_{\cB_{2^{1-i}\sigma} (\bar{x}_k (p))} f^2 - \mint_{\cB_{2^{-i}\sigma} (\bar{x}_k (p))} f^2\right)\nonumber\\
\stackrel{\mathclap{\eqref{e:L2_basso}}}{\leq}
\; & \; \frac{\vartheta}{2} \bh_k^2 + C \sum_{i=1}^{j} \frac{1}{(2^{-j}\sigma)^{m-2}}
\int_{\cB_{2^{1-i}\sigma} (\bar{x}_k (p))} |D\bar{N}_k|^2\nonumber\\
\leq\; & \frac{\vartheta}{2} \bh_k^2 + C c_0\frac{\vartheta}{\sigma^\alpha} 
\bh_k^2 \sum_{i=1}^{j} (2^{-j}\sigma)^\alpha
\leq \bh_k^2 \left(\frac{\vartheta}{2} + C(\alpha) c_0 \vartheta\right)\, .\nonumber
\end{align}
In the second line we have used the simple Morrey inequality 
\begin{align*}
\left|\mint_{\cB_{2t} (\bar{x}_k (p))} f^2 - \mint_{\cB_{t} (\bar{x}_k (p))} f^2\right| &\leq \frac{C}{t^{m-2}} \int_{\cB_{2t} (\bar{x}_k (p))}
|Df|^2\\
&\leq  \frac{C}{t^{m-2}} \int_{\cB_{2t} (\bar{x}_k (p))} |D\bar{N}_k|^2\, .
\end{align*}
The constant $C$ depends only upon the 
regularity of the underlying manifold $\bar\cM_k$, 
and, hence, can assumed independent of $k$.

Since $C(\alpha)$ depends only on $\alpha$, $m$ and $Q$,
for $c_0$ chosen sufficiently small the latter inequality would contradict \eqref{e:dall'alto}.

\medskip

\textit{Step (3).}
We collect the estimates \eqref{e:dal_basso_finale} and \eqref{e:inside} to
infer the desired contradiction.
We cover $\Lambda_k$ with balls $\B^i := \B_{20 \varrho_{p_i}} (p_i)$
such that $\B_{4\varrho_{p_i}} (p_i)$ are disjoint, and deduce
\begin{align*}
\frac{\eta'}{2} & \leq C(m)
\sum_i \varrho_{p_i}^{m-2+\alpha}
\stackrel{\eqref{e:dal_basso_finale}}{\leq} \frac{C(m)}{c_0}\frac{\sigma^\alpha}{\vartheta\bh_k^2} \sum_i \int_{\cB_{\varrho_{p_i}} (\bar{x}_k (p_i))} |D\bar{N}_k|^2\\
&\leq \frac{C(m)}{c_0}\frac{\sigma^\alpha}{\vartheta\bh_k^2} \int_{\cB_{\frac32}} |D\bar{N}_k|^2
\stackrel{\eqref{e:rev_Sob2}}{\leq}C\frac{\sigma^\alpha}{\vartheta},
\end{align*}
where $C(m)>0$ is a dimensional constant. We have used that
the balls $\cB_{\varrho_{p_i}} (\p_{\bar\cM_k}(p_i))$ are pairwise disjoint
by \eqref{e:inside}.
Now note that $\vartheta$ and $c_0$ are independent of $\sigma$, and therefore we
can finally choose $\sigma$ small enough to lead to a contradiction.

\subsubsection{Persistence of $Q$-points}
Here we explain a simple instance of estimate \eqref{e:sotto_dal_cm_2},
reporting the following theorem from \cite{DS3}.

\begin{theorem}[Persistence of $Q$-points]\label{t:persistence}
For every $\hat{\delta}>0$, there is $\bar{s}\in ]0, \frac{1}{2}[$ such that, for every $s<\bar{s}$, there exists $\hat{\eps} (s,\hat{\delta})>0$ with the following property. If $T$ is as in Theorem \ref{t:approx},
$E := \bE(T,\bC_{4\,r} (x)) < \hat{\eps}$
and $\Theta (T, (p,q)) = Q$ at some $(p,q)\in \bC_{r/2} (x)$, then the approximation
$f$ of Theorem~\ref{t:approx} satisfies
\begin{equation}\label{e:persistence}
\int_{B_{sr} (p)} \cG (f, Q \a{\etaa\circ f})^2 \leq \hat{\delta} s^m r^{2+m} E\, .
\end{equation}
\end{theorem}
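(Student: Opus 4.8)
The plan is to argue by contradiction and compactness, using the Lipschitz and harmonic approximations of Theorems~\ref{t:approx} and \ref{t:harmonic_final} to reduce the statement to a decay property of $\D$-minimizing $Q$-valued functions, and Almgren's frequency function to control the small scales $s\to 0$. By the invariance of all hypotheses and of the conclusion under the rescalings $\iota_{x,r}$ and under translations in the target, one may assume $x=0$, $r=1$ and $p=0$, so that $T$ is area minimizing in $\bC_4$, $E=\bE(T,\bC_4)$ is small, and $\Theta(T,(0,q))=Q$ with $|q|$ small. If the statement failed for some $\hat\delta>0$, then letting $\bar s\to 0$ in the negated statement would produce radii $s_k\downarrow 0$, area minimizing currents $T_k$ in $\bC_4$ with $E_k:=\bE(T_k,\bC_4)\downarrow 0$, points $q_k\to 0$ with $\Theta(T_k,(0,q_k))=Q$, and approximations $f_k$ furnished by Theorem~\ref{t:approx} with
\[
\int_{B_{s_k}} \cG\big(f_k,Q\a{\etaa\circ f_k}\big)^2 > \hat\delta\, s_k^m\, E_k .
\]

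The first step is to blow up the normalized ``spread'' $v_k:=E_k^{-1/2}\big(f_k\ominus(\etaa\circ f_k)\big)$, which satisfies $\etaa\circ v_k\equiv 0$ and $\cG(v_k,Q\a{0})=E_k^{-1/2}\,\cG(f_k,Q\a{\etaa\circ f_k})$. From the energy estimate $\int_{B_1}|Df_k|^2\le C E_k$ of Theorem~\ref{t:approx}(iii) and the Poincar\'e inequality for $Q$-valued maps with vanishing average, the $v_k$ are equibounded in $W^{1,2}(B_1)$; passing to a subsequence, $v_k\to v_\infty$ strongly in $L^2(B_{3/4})$, and — by the mechanism underlying Theorem~\ref{t:harmonic_final} — $v_\infty$ is $\D$-minimizing with $\etaa\circ v_\infty\equiv 0$. (Should $v_\infty\equiv 0$, one renormalizes the $v_k$ instead by the $L^2$-norm of the spread, using a Caccioppoli-type inequality to retain the $W^{1,2}$-bound.) In these variables the violated inequality becomes $s_k^{-m}\int_{B_{s_k}}\cG(v_k,Q\a{0})^2>\hat\delta$ for every $k$.

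The heart of the matter is to prove that $0$ is a collapse point of the limit, $v_\infty(0)=Q\a{0}$, and that the spread of $v_k$ near $0$ decays \emph{uniformly in $k$}. The collapse follows from maximality of the density: since $Q$ is the degree with which $T_k$ projects onto $\pi_0$, no point carries density larger than $Q$, so a density-$Q$ point is a point of maximal density, whose unique tangent cone is $\pi_0$ with multiplicity $Q$; if the $Q$ sheets of $\graph(v_\infty)$ did not all meet over $x=0$, a definite fraction of the mass of $T_k$ near $(0,q_k)$ would lie on sheets not through $(0,q_k)$, forcing $\Theta(T_k,(0,q_k))<Q$, and hence (using also $\etaa\circ v_\infty\equiv 0$) $v_\infty(0)=Q\a{0}$. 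For the uniform decay one invokes an almost-monotonicity formula for Almgren's frequency function of the maps $v_k$ centered at $0$ — the same circle of ideas as in Theorem~\ref{t:frequency}, now for the Lipschitz approximation of an area minimizer at unit scale. The collapse at $0$ together with $\etaa\circ v_\infty\equiv 0$ bounds the frequency at $0$ from below by a dimensional constant, while the density constraint and the minimality of the $T_k$ exclude the degenerate, frequency-one behavior (a frequency-one blow-up would force the tangent cones of the $T_k$ to converge to a union of $Q$ distinct planes, which is not area minimizing); the almost-monotonicity then yields a modulus $\omega$ with $\omega(s)\to 0$, \emph{independent of $k$}, such that $s^{-m}\int_{B_s}\cG(v_k,Q\a{0})^2\le\omega(s)$. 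Choosing $k$ so large that $s_k<\bar s$ with $\omega(\bar s)<\hat\delta$ contradicts the previous display.

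The main obstacle is precisely this last, quantitative, step. Because $s_k\to 0$, plain $L^2$-compactness of the $v_k$ is of no help, and one genuinely needs a decay rate for the spread that is uniform along the sequence and survives all the way down to scale $s_k$; furnishing such a rate — and excluding along the way an infinite order of contact — is exactly what Almgren's frequency function and its almost-monotonicity are designed for. Setting up the frequency function for the Lipschitz approximations $f_k$ (rather than for an exact $\D$-minimizer), establishing the almost-monotonicity in this setting, and transferring the collapse information from the currents to the $v_k$ in a quantitative way, constitute the technical core of the argument; the complete proof is carried out in \cite{DS3}.
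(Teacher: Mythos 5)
There is a genuine gap, and it originates in how you negate the statement. Since the theorem allows $\hat{\eps}$ to depend on $s$, the contradiction argument can (and in the paper does) run at a \emph{fixed} scale $s<\bar{s}$, with only $E\to 0$: one fixes $\hat\delta$ and $s$, assumes counterexamples with arbitrarily small excess, and derives a contradiction in which the dangerous factor $\sigma^{2-m}\geq (s/4)^{2-m}$ is harmless precisely because $\hat\eps$ may be chosen after $s$. By instead letting $\bar s\to 0$ you produce a sequence with $s_k\downarrow 0$ and $E_k\downarrow 0$ simultaneously, and you are then forced to prove a decay of the spread of $v_k$ that is uniform in $k$ down to the vanishing scale $s_k$ --- which is essentially the theorem itself, restated. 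The tool you invoke for this, an almost-monotonicity of Almgren's frequency for the Lipschitz approximations $f_k$ (or $v_k$) at unit scale, is not available: in this paper the frequency estimates (Theorem~\ref{t:frequency}, Proposition~\ref{p:variation}) are established only for the $\cM$-normal approximation on a center manifold, and the center manifold construction is exactly what makes the error terms in the first variations superlinear and hence controllable; no such statement holds for the raw approximation of Theorem~\ref{t:approx}, and the $k$-uniform modulus $\omega$ does not follow from the limit $v_\infty$ being Dir-minimizing (plain $L^2$ convergence says nothing on $B_{s_k}$, as you yourself note). In addition, your exclusion of ``frequency-one behavior'' on the grounds that a union of $Q$ distinct planes is not area minimizing is false: transverse planes can be area minimizing (e.g.\ calibrated complex lines in $\C^2$), so that step would fail even granting the frequency machinery.

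You also never use the hypothesis $\Theta(T,(p,q))=Q$ in the quantitative way the proof requires. The actual argument is much more elementary: from the contradiction assumption and the harmonic approximation (Theorem~\ref{t:harmonic_final}), the Dir-minimizer $w$ has large spread at one point of $B_s(p)$, and the H\"older continuity of Theorem~\ref{t:Qvalued}(ii) --- this is where $\bar s$ is chosen, depending only on $\hat\delta$ --- propagates the lower bound $\cG(w,Q\a{\etaa\circ w})\gtrsim(\hat\delta E)^{1/2}$ to all of $B_s(p)$, hence to $f$ on a large set. A slicing argument over thin annuli then shows that sheets separated by $\sim(\hat\delta E/Q)^{1/2}$ cannot all fit inside the ball $\B_\sigma((p,q))$ near its equator, forcing $\|T\|(\bC_\sigma(p)\setminus\B_\sigma((p,q)))\geq c_0\hat\delta E\sigma^{m-2}$ as in \eqref{e:punti_persi}. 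This is incompatible with the mass upper bound in the cylinder (Taylor expansion plus the interior decay $\int_{B_\sigma(p)}|Dw|^2\leq CE\sigma^{m-2+2\kappa}$) together with the monotonicity-formula lower bound $\|T\|(\B_\sigma((p,q)))\geq Q\omega_m\sigma^m$ coming from $\Theta(T,(p,q))=Q$, once $\bar s$ is small (depending on $\hat\delta$) and $\hat\eps,\bar\eta$ are small depending on $s$ and $\hat\delta$. No compactness at scale $s_k\to 0$, and no frequency function, is needed.
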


This theorem states that, in the presence of multiplicity $Q$ points of the
current, the Lipschitz (and therefore also the normal) approximations
must have a relatively small $L^2$ norm, compared to the excess; or,
as explained above, if in the normal approximation the excess is linked to the
Dirichlet energy (for example this is the case of (EX)-cubes in the Whitney decomposition),
the energy needs to be relatively large with respect to the $L^2$ norm, thus vaguely explaining
the link to \eqref{e:sotto_dal_cm_2}.

\begin{proof}
By scaling and translating we assume $x=0$ and $r=1$;
the choice of $\bar{s}$ will be specified at the very end, but for the moment we impose $\bar{s}<\textstyle{\frac{1}{4}}$. 
Assume by contradiction that, for arbitrarily small $\hat{\eps}>0$, there are currents $T$ 
and points $(p,q)\in \bC_{1/2}$ satisfying:
$E := \bE (T, \bC_4)< \hat{\eps}$, $\Theta (T, (p,q)) = Q$ and, for $f$ as in Theorem~\ref{t:approx},
\begin{equation}\label{e:no_persistence}
\int_{B_s (p)} \cG (f, Q \a{\etaa\circ f})^2 > \hat{\delta} s^m E\, .
\end{equation}
Set $\bar{\delta} = \frac{1}{4}$ and fix $\bar{\eta}>0$ (whose choice will be specified later).
For a suitably small $\hat{\eps}$ we can apply Theorem \ref{t:harmonic_final},
obtaining a Dir-minimizing approximation $w$.
If $\bar{\eta}$ and $\hat{\eps}$ are suitably small, we have
\[
\int_{B_s (p)} \cG (w, Q\a{\etaa\circ w})^2 \geq \textstyle{\frac{3\hat\delta}{4}} s^m E\, ,
\]
and $\sup\big\{\D (f), \D (w)\} \leq  C E$. Then there exists 
$\bar{p}\in B_s (p)$ with
\[
\cG (w (\bar p), Q \a{\etaa \circ w (\bar p)})^2 \geq \frac{3\hat\delta}{4\omega_m}\,E,
\]
and, by the H\"older continuity in Theorem~\ref{t:Qvalued} (ii), we conclude
\begin{align}\label{e:imponi}
g (x) & := \cG (w (x), Q\a{\etaa\circ w (x)}) \notag\\
&\geq \left(\textstyle{\frac{3\hat\delta}{4\omega_m}} E\right)^{\frac{1}{2}} - 
2\,(C E)^{\frac{1}{2}} \bar{C} \bar{s}^{\kappa}
\geq \left(\textstyle{\frac{\hat\delta}{2}} E\right)^{\frac{1}{2}}\,,
\end{align}
where we assume that $\bar s$ is chosen small enough in order to satisfy the last inequality.
Setting $h (x):= \cG (f (x), Q\a{\etaa\circ f (x)})$, we recall that we have
\[
\int_{B_s (p)} |h-g|^2 \leq C\, \bar{\eta} E\, .
\]
Consider therefore the set $A:= \big\{h > \big(\frac{\hat \delta}{4} E\big)^{\frac{1}{2}}\big \}$. If $\bar \eta$ is sufficiently small, we can assume that 
\[
|B_s(p)\setminus A| < \frac{1}{8} |B_s|.
\]
Further, define $\bar{A}:= A\cap K$, where $K$ is the set of Theorem \ref{t:approx}. Assuming $\hat{\eps}$ is sufficiently small we ensure $|B_s(p)\setminus \bar A| < \frac{1}{4} |B_s|$. Let $N$ be the smallest integer such that $N \frac{\hat \delta E}{64 Q s} \geq \frac{s}{2}$. Set
\[
\sigma_i := s- i \frac{\hat \delta E}{64Q s} \quad\text{for $i\in \{0, 1\ldots, N\}$,}
\]
and consider, for $i\leq N-1$,  the annuli $\cC_i:= B_{\sigma_i} (p) \setminus B_{\sigma_{i+1}} (p)$. If $\hat{\eps}$ is sufficiently small, we can assume that $N\geq 2$ and
$\sigma_N \geq \frac{s}{4}$.
For at least one of these annuli we must have $|\bar A\cap \cC_i|\geq \frac{1}{2} |\cC_i|$. We then let $\sigma:= \sigma_i$ be the corresponding outer radius and we denote by $\cC$ the corresponding annulus.

Consider now a point $x\in \cC \cap \bar{A}$ and let $T_x$ be the slice $\langle T, \p, x \rangle$. Since $\bar{A}\subset K$, for a.e.~$x\in \bar{A}$ we have $T_x = \sum_{i=1}^Q \a{(x,f_i (x))}$. Moreover, there exist $i$ and $j$ 
such that $|f_i (x)-f_j (x)|^2\geq \frac{1}{Q} \cG (f (x), \a{\etaa \circ f (x)})^2 \geq \frac{\hat \delta}{4Q} E$ (recall
that $x\in \bar{A}\subset A$). When $x\in \cC$ and the points $(x,y)$ and $(x,z)$ belong both to $\B_\sigma ((p,q))$, we must have 
\[
|y-z|^2 \leq 4 \left(\sigma^2 - \Big(\sigma - \textstyle{\frac{\hat{\delta} E}{64 Q s}}\right)^2\Big) \leq 
\textstyle{\frac{\sigma \hat\delta E}{8Q s}} \leq \textstyle{\frac{\hat\delta E}{8Q}}\, .
\]
Thus, for $x\in \bar{A}\cap \cC$ at least one of the points $(x, f_i (x))$ is not contained in $\B_\sigma ((p,q))$. We conclude therefore 
\begin{align}
\|T\| (\bC_{\sigma} (p) \setminus \B_\sigma ((p,q))) &\geq |\cC\cap \bar{A}| \geq \frac{1}{2} |\cC| = 
\frac{\omega_m}{2} \left( \sigma^m - \left(\sigma -  \textstyle{\frac{\hat{\delta} E}{64 Qs}}\right)^m \right)\nonumber\\
&\geq \frac{\omega_m}{2} \sigma^m \left( 1- \left(1-  \textstyle{\frac{\hat{\delta} E}{64 Qs \sigma}}\right)^m \right)\, .
\end{align}
Recall that, for $\tau$ sufficiently small, $(1-\tau)^m \leq 1 - \frac{m\tau}{2}$.
Since $\sigma \geq \frac{s}{4}$, if $\hat{\eps}$ is chosen sufficiently small we can therefore conclude
\begin{equation}\label{e:punti_persi}
\|T\| (\bC_{\sigma} (p) \setminus \B_\sigma (p)) \geq \frac{\omega_m \sigma^m \hat{\delta} E}{256 Q s \sigma}
\geq \frac{\omega_m}{1024 Q} \hat{\delta} E  \sigma^{m-2}= c_0 \hat{\delta} E \sigma^{m-2}\, .
\end{equation}
Next, by Theorem \ref{t:approx} and Theorem \ref{t:harmonic_final},
\begin{equation}\label{e:stima_nel_cilindro}
\|T\| (\bC_{\sigma} (p)) \leq Q \omega_m \sigma^m + C E^{1+\gamma_1} +\bar{\eta} E + \int_{B_\sigma (p)} \frac{|Dw|^2}{2}\, .
\end{equation}
Moreover, as shown in \cite[Proposition 3.10]{DS1}, we have
\begin{equation}\label{e:decay}
\int_{B_\sigma (p)} |Dw|^2 \leq  C  \D (w) \sigma^{m-2+2\kappa},
\end{equation}
(for some constants $\kappa$ and $C$ depending only on $m$, $n$ and $Q$; in fact the exponent
$\kappa$ is the one of Theorem~\ref{t:Qvalued} (ii)).
Combining \eqref{e:punti_persi},
\eqref{e:stima_nel_cilindro} and \eqref{e:decay}, we conclude
\begin{equation}\label{e:da_sopra}
\|T\| (\B_\sigma ((p,q))) \leq Q \omega_m \sigma^m + \bar{\eta}\, E + C E^{1+\gamma_1} + C E \sigma^{m-2+2\kappa} - c_0 \sigma^{m-2} \hat\delta E\, .
\end{equation}
Next, by the monotonicity formula, $\rho\mapsto \rho^{-m} \|T\| (\B_\rho ((p,q)))$ is a monotone function.
Using $\Theta(T,(p,q)) =Q$, we conclude
\begin{equation}\label{e:da_sotto}
\|T\| (\B_\sigma ((p,q))) \geq Q \omega_m \sigma^m.
\end{equation}
Combining \eqref{e:da_sopra} and \eqref{e:da_sotto} we conclude 
\begin{equation}\label{e:basso_alto_10}
C \sigma^2 + (\bar{\eta} + C E^\gamma_1) \sigma^{2-m} + C \sigma^{2\kappa} \geq c_0 \hat{\delta}\, .
\end{equation}
Recalling that $\sigma \leq s < \bar{s}$, we can, finally, specify $\bar{s}$: it is chosen so that
$C \bar{s}^2 + C \bar{s}^{2\kappa}$ is smaller than $\frac{c_0}{2} \hat{\delta}$. Combined with
\eqref{e:imponi} this choice of $\bar{s}$ depends, therefore, only upon $\hat{\delta}$. \eqref{e:basso_alto_10}
becomes then
\begin{equation}\label{e:basso_alto_11}
(\bar{\eta} + C E^{\gamma_1}) \sigma^{2-m}  \geq \textstyle{\frac{c_0}{2}} \hat{\delta}\, .
\end{equation}
Next, recall that $\sigma\geq \frac{s}{4}$. We then choose $\hat{\eps}$ and $\bar \eta$
so that
$(\bar{\eta} + C \hat{\eps}^{\gamma_1}) (\frac{s}{4})^{2-m} \leq \frac{c_0}{4} \hat{\delta}$.
This choice is incompatible with \eqref{e:basso_alto_11}, thereby reaching a contradiction:
for this choice of the parameter $\hat{\eps}$ (which in fact depends only upon $\hat{\delta}$
and $s$) the conclusion of the theorem, i.e.~\eqref{e:persistence}, must then be valid.
\end{proof}

\section{Open questions}
We close this survey recalling some open problems concerning the
regularity of area minimizing integer rectifiable currents.
Some of them have been only slightly touched and would actually explain some of the
complications that we met along the proof 
of the partial regularity result.

For more open problems and comments, we suggest the reading of \cite{proceedings84,DLicm}.

\subsubsection*{(A)}
One of the main, perhaps the most well-known, open problems
is the uniqueness of the tangent
cones to an area minimizing current, i.e.~the uniqueness 
of the limit $(\iota_{x,r})_\sharp T$ as $r\to 0$ for every $x \in \supp(T)$.
The uniqueness is known for two dimensional currents (cp.~\cite{Wh83}),
and there are only partial results in the general case (see \cite{AA,Sim83}).

We have run into this issue in dealing with the step (C) of \S~\ref{ss:sketch}, 
because it is one of the possible reasons why a center manifold may be
sufficient in our proof.

\subsubsection*{(B)}
A related question is that of the uniqueness of the inhomogeneous
blowup for Dir-minimizing $Q$-valued functions.
Also in this case the uniqueness is known for two dimensional domains 
(cp.~\cite{DS1}, following ideas of \cite{Chang}).

Even if it does not play a role in the contradiction argument
for the partial regularity,
a positive answer to this question could indeed contribute to the solution
of next two other major open problems.

\subsubsection*{(C)} It is unknown whether the singular set of an area
minimizing current has always locally finite $\cH^{m-2}$ measure.
This is the case for two dimensional currents (as proven by Chang \cite{Chang}); note
that in this result the uniqueness of the blowup Dir-minimizing map
plays a fundamental role.

\subsubsection*{(D)} It is unknown whether the singular set of an area
minimizing current has some geometric structure, e.g.~if it is rectifiable
(i.e., roughly speaking, if it is contained in lower dimensional ($m-2$)-dimensional submanifolds).
Once again it is known the positive answer for two dimensional currents,
where the singularities are known to be locally isolated,
and the uniqueness
of the tangent map is one of the fundamental steps in the proof.

\subsubsection*{(E)} We mention also the problem of finding
more example of area minimizing currents, other than those coming from
complex varieties or similar calibrations.
Indeed, our understanding of the possible pathological behaviors of such
currents is pretty much limited by the few examples we have at disposal.
In particular, it would be extremely interesting
to understand if there could be minimizing currents
with weird singular set (e.g., of Cantor type).

\subsubsection*{(F)} Finally, we mention the problem of boundary regularity
for higher codimension area minimizing currents, which to our knowledge is
mostly open.

%
%

\bibliographystyle{plain}
\bibliography{references}

\end{document}